  \documentclass[12pt]{amsart}
\usepackage{kotex}
\usepackage{amsmath}
\usepackage{amsfonts}
\usepackage{amscd}

\usepackage{amssymb}
\usepackage{graphicx}
\usepackage{caption}
\usepackage{subcaption}
\usepackage{dsfont}
\usepackage{color}
\usepackage{hyperref}
\usepackage{bbm}
\usepackage{a4wide}
\usepackage{sseq}
\usepackage{tikz-cd}
\usepackage[abs]{overpic}

\usepackage{psfrag}
\usepackage{marvosym}
\usepackage{amssymb}
\usepackage{amsthm}
\usepackage{mathrsfs}
\usepackage{enumitem}
\usepackage{bbm}
 \usepackage{comment}
\usepackage{ytableau}
\usepackage{hyperref}


\usepackage{calligra}
\usepackage{mathrsfs}
\DeclareMathAlphabet{\mathcalligra}{T1}{calligra}{m}{n}
\DeclareFontShape{T1}{calligra}{m}{n}{<->s*[1.5]callig15}{}

\footskip 1cm
\textheight 22.5cm

\newtheorem{theorem}{Theorem}[section]

\newtheorem{lemma}[theorem]{Lemma}

\theoremstyle{definition}
\newtheorem{definition}[theorem]{Definition}

\newtheorem{remark}[theorem]{Remark}

\newtheorem{theorem-definition}[theorem]{Theorem-Definition}

\numberwithin{equation}{section}

\newcommand{\CC} {\mathbb{C}}

\newcommand{\NN} {\mathbb{N}}

\newcommand{\RR} {\mathbb{R}}

\newcommand{\ZZ} {\mathbb{Z}}

\newcommand {\shD} {\mathcal{D}}

\newcommand {\shH} {\mathcal{H}}
\newcommand {\shI} {\mathcal{I}}

\newcommand {\shL} {\mathcal{L}}

\newcommand {\shP} {\mathcal{P}}

\newcommand {\fot}  {\mathfrak{t}}

\newcommand {\foV} {\mathfrak{V}}


\newcommand{\sExt}{\mathscr{E} \kern -1pt xt}

\newcommand {\GL} {\operatorname{GL}}

\newcommand {\sHom}{\mathscr{H}\kern-5pt\mathcalligra{om}}

\renewcommand {\Im} {\operatorname{Im}}

\renewcommand {\ker } {\operatorname{Ker}}

\newcommand{\Diff}{\operatorname{Diff}}

\newcommand {\rank} {\operatorname{rank}}

\newcommand{\sTor}{\mathscr{T} \kern -3pt or}





\begin{document}

\title[ Geodesic rays in the space of K\"ahler metrics with T-symmetry]{Geodesic rays in the space of \\
K\"ahler metrics with T-symmetry}	

\author[Leung]{Naichung Conan Leung,}
\address{The Institute of Mathematical Sciences and Department of Mathematics\\ The Chinese University of Hong Kong\\ Shatin\\ Hong Kong}
\email{leung@math.cuhk.edu.hk}
	
\author[Wang]{Dan Wang}
\address{School of Mathematical Sciences\\ East China Normal University \\Shanghai \\China}
\address{The Institute of Mathematical Sciences and Department of Mathematics\\ The Chinese University of Hong Kong\\ Shatin\\ Hong Kong}
\email{dwang116@link.cuhk.edu.hk}
\thanks{}
\maketitle

\begin{abstract}
Let $(M, \omega, J)$ be a K\"ahler manifold, equipped with an effective
 Hamiltonian torus action $\rho: T \rightarrow \Diff(M, \omega, J)$ by isometries with moment map $\mu: M \rightarrow \mathfrak{t}^{*}$. We first construct a singular mixed polarization $\mathcal{P}_{\mathrm{mix}}$ on $M$. Second we construct a one-parameter family of complex structures $J_{t}$ on $M$ which are compatible with $\omega$. Furthermore the path of corresponding K\"ahler metrics $g_{t}$ is a complete geodesic ray in the space of K\"ahler metrics of $M$, when $M$ is compact.
Finally, we show that the corresponding family of K\"ahler polarizations $\mathcal{P}_{t}$ associated to $J_{t}$ converges to $\mathcal{P}_{\mathrm{mix}}$ as $t \rightarrow \infty$. \end{abstract}



\section{Introduction}
An important problem in geometric quantization is to understand the relationship among quantizations associated to different polarizations on a compact symplectic manifold $(M,\omega)$. If $M$ is K\"ahler, its complex structure $J$ would define a K\"ahler polarization $\shP_{J}$ on it. In this paper, we assume $M$ also admits a Hamiltonian torus action by isometries. We first construct a (possibly singular) polarization $\shP_{\mathrm{mix}}$ on $M$ by combining the Hamiltonian action with the K\"ahler polarization $\shP_{J}$. Second we construct a one-parameter family of complex structures $J_{t}$ on $M$ which are compatible with $\omega$. Furthermore the path of corresponding K\"ahler metrics $g_{t}$ is a complete geodesic ray in the space of K\"ahler metrics of $M$, when $M$ is compact.
Finally, we show that the corresponding family of K\"ahler polarizations $\mathcal{P}_{t}$ associated to $J_{t}$ converges to $\mathcal{P}_{\mathrm{mix}}$ as $t \rightarrow \infty$.

 When $M$ is toric, this is a result of Baier, Florentino, Mour\~{a}o, and Nunes in \cite{BFMN}. 
In this case, $\shP_{\mathrm{mix}}$ is essentially the real polarization defined by moment map. Unlike the toric cases, we can not use the explicit K\"ahler structures given by symplectic potentials (see \cite{ Ab1, Ab2,Gui1,Gui2}). Instead we will use the imaginary time flow approach introduced by Mour\~{a}o and Nunes in \cite{MN}, the relationship between K\"ahler polarizations and moment maps studied by Burns and Guillemin in \cite{BG}, and holomorphic slices investigated by Sjamaar in \cite{Sj}.

Throughout this paper, we assume the following:

 \begin{enumerate}
\item[$(*_{0}):$]  $(M, \omega, J)$ is a K\"ahler manifold of real dimension $2m$, equipped with an effective Hamiltonian $n$-dimensional torus action $\rho: T^{n} \rightarrow \Diff(M, \omega, J)$ by isometries with moment map $\mu: M \rightarrow \fot^{*}$.
\end{enumerate} 
$$
\begin{tikzcd}[row sep=3em, column sep=3em]
T^{n} \arrow[bend left]{r}{\rho} & (M, \omega) \arrow{r}{\mu} & \fot^{*}.
\end{tikzcd}
$$

From the Hamiltonian action, we have two singular distributions $\shD_{\CC}=(\ker d\mu) \otimes \CC$ and $\shI_{\CC}= (\Im d\rho)\otimes \CC$ on $M$ which are smooth on the open dense subset $\check{M}$ consisting of $n$-dimensional orbits in $M$. That is
$$\check{M}= \{p \in M| \dim H_{p} =0\},$$
where $H_{p}$ is the stabilizer of $T^{n}$ at point $p \in M$. 
Combining with the K\"ahler polarization $\shP_{J}$, we define the following singular distribution
$$\shP_{\mathrm{mix}}=(\shP_{J} \cap \shD_{\CC}) \oplus \shI_{\CC}$$ (see Definition \ref{def3-0-1}) on $M$.

 \begin{theorem}(Theorem \ref{thm3-0-5})
Under the assumption $(*_{0})$, we have $\shP_{\mathrm{mix}}$ is a singular polarization and $\mathrm{rk}_{\RR}(\shP|_{\check{M}})=n$.
\end{theorem}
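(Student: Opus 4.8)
The plan is to verify every property on the open dense set $\check{M}$ of principal orbits, where $d\mu$ has constant rank $n$ (so $\shI$ and $\shD$ are genuine subbundles), and then to read off the singular statement from the fact that these ranks drop only on the lower-dimensional strata $M\setminus\check{M}$. The organizing device is the $J$-invariant, $g$-orthogonal splitting
\begin{equation*}
TM|_{\check{M}}=\shI\oplus J\shI\oplus N,\qquad N:=(\shI\oplus J\shI)^{\perp_{g}},
\end{equation*}
obtained from the moment-map identity $\iota_{X_{\xi}}\omega=d\mu^{\xi}$ together with the K\"ahler relation $\omega(\cdot,\cdot)=g(J\cdot,\cdot)$. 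Two facts will be used repeatedly: $T$-invariance of $\mu^{\xi}$ gives $\omega(X_{\xi},X_{\eta})=0$, so $\shI$ is $\omega$-isotropic and $\shI\subset\ker d\mu$; and $\ker d\mu=(J\shI)^{\perp_{g}}=\shI\oplus N$, with $N$ the $J$-invariant complement of the rank-$2n$ bundle $W=\shI\oplus J\shI$.

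First I would pin down the rank. Fixing the convention $\shP_{J}=T^{0,1}M$ and using $T^{0,1}=W^{0,1}\oplus N^{0,1}$, I claim $\shP_{J}\cap\shD_{\CC}=N^{0,1}$: the $J\shI$-component of a vector $\sum_{\xi}c_{\xi}(X_{\xi}+iJX_{\xi})\in W^{0,1}$ equals $i\sum_{\xi}c_{\xi}JX_{\xi}$, which vanishes only if all $c_{\xi}=0$, so $W^{0,1}\cap\shD_{\CC}=0$. Since $N\perp_{g}\shI$ forces $N_{\CC}\cap\shI_{\CC}=0$, the sum
\begin{equation*}
\shP_{\mathrm{mix}}=(\shP_{J}\cap\shD_{\CC})\oplus\shI_{\CC}=N^{0,1}\oplus\shI_{\CC}
\end{equation*}
is direct, of complex rank $(m-n)+n=m$. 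Maximal isotropy is then three short checks: $\omega$ vanishes on $N^{0,1}\subset\shP_{J}$ by isotropy of the K\"ahler polarization, on $\shI_{\CC}$ by the identity above, and on the cross term because $N\subset\ker d\mu$ yields $\omega(n,X_{\xi})=-d\mu^{\xi}(n)=0$.

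Involutivity is the core, and its cross term is where I expect the real difficulty. The bracket $[\shP_{J}\cap\shD_{\CC},\shP_{J}\cap\shD_{\CC}]$ I would handle by intersecting two involutive distributions: $\shP_{J}$ is involutive since $J$ is integrable, and $\shD_{\CC}$ is involutive since $\ker d\mu$ is tangent to the fibres of the submersion $\mu|_{\check{M}}$ (Frobenius). The bracket $[\shI_{\CC},\shI_{\CC}]$ vanishes because $T^{n}$ is abelian, so $[X_{\xi},X_{\eta}]=0$. The genuinely delicate term $[N^{0,1},\shI_{\CC}]$ is where the hypotheses on $\rho$ are indispensable: each $X_{\xi}$ is a holomorphic Killing field, its flow lying in $\Diff(M,\omega,J)$; hence $W$ and therefore $N$ are $T$-invariant (using $[X_{\xi},X_{\eta}]=0$ and invariance of $J,g$), so that $[X_{\xi},\cdot]$ preserves $\Gamma(N_{\CC})$, while $\mathcal{L}_{X_{\xi}}J=0$ forces it to preserve $T^{0,1}$. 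Intersecting the two invariances gives $[N^{0,1},\shI_{\CC}]\subset N_{\CC}\cap T^{0,1}=N^{0,1}$, completing involutivity; establishing these two invariances cleanly is the main obstacle.

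Finally, the real rank follows from $\overline{N^{0,1}}=N^{1,0}$ and $\overline{\shI_{\CC}}=\shI_{\CC}$: since $N^{1,0}\cap N^{0,1}=0=N_{\CC}\cap\shI_{\CC}$, one gets $\shP_{\mathrm{mix}}\cap\overline{\shP_{\mathrm{mix}}}=\shI_{\CC}$, whose real points form $\shI=\Im d\rho$, of real rank $n$ on $\check{M}$; this is exactly the asserted $\mathrm{rk}_{\RR}(\shP_{\mathrm{mix}}|_{\check{M}})=n$. To conclude the \emph{singular} statement it then remains only to record that $\shD_{\CC}$, $\shI_{\CC}$, and hence $\shP_{\mathrm{mix}}$, are defined as distributions on all of $M$ whose ranks are locally constant precisely on $\check{M}$ and drop on the non-principal strata, which is what Definition \ref{def3-0-1} demands of a singular polarization.
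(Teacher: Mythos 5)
Your treatment of the smooth locus $\check{M}$ is correct, and it genuinely differs from the paper's: you identify $\shP_{J}\cap\shD_{\CC}=N^{0,1}$ through the $g$-orthogonal, $J$-invariant splitting $TM|_{\check{M}}=\shI\oplus J\shI\oplus N$ and read off rank, isotropy and $\shP_{\mathrm{mix}}\cap\overline{\shP}_{\mathrm{mix}}=\shI_{\CC}$ directly, whereas the paper gets the dimension count by taking the symplectic orthogonal of $F_{p}=(\shP_{J}\cap\shD_{\CC})_{p}$ and citing \cite[corollary 2.4]{GS1}. On $\check{M}$ your route is arguably more transparent, and it yields $\mathrm{rk}_{\RR}(\shP_{\mathrm{mix}}|_{\check{M}})=n$ with a cleaner justification than the paper's one-line appeal to local freeness.

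The genuine gap is at the singular strata. In Definitions \ref{def4-4}, \ref{def4-5} and \ref{def4-7}(a), involutivity is a condition on sections over \emph{all} of $M$: for $u,v\in\Gamma(M,\shP_{\mathrm{mix}})$ one needs $[u,v]_{p}\in(\shP_{\mathrm{mix}})_{p}$ at every $p\in M$, including $p\in M\setminus\check{M}$. Every ingredient of your involutivity argument lives only on $\check{M}$: the bundle $N$ exists only there, the Frobenius argument for $\shD_{\CC}$ needs $\mu$ to be a submersion (true only on $\check{M}$), and the $T$-invariance of $N$ is again an $\check{M}$-statement. Your closing sentence asserts that the singular statement follows merely by recording where ranks drop, but that does not address involutivity at singular points, and it cannot be repaired by continuity or density, because the fibers of $\shP_{\mathrm{mix}}$ are not upper semicontinuous across $M\setminus\check{M}$. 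Already for $M=\CC$ with the standard $S^{1}$-rotation one has $(\shP_{\mathrm{mix}})_{z}=\langle\partial_{\theta}\rangle_{\CC}$ for $z\neq0$ but $(\shP_{\mathrm{mix}})_{0}=\langle\partial_{\bar z}\rangle$, and the vectors $\tfrac{1}{\bar z}\partial_{\theta}=i\tfrac{z}{\bar z}\partial_{z}-i\partial_{\bar z}$ converge along the real axis to $i\partial_{z}-i\partial_{\bar z}\notin(\shP_{\mathrm{mix}})_{0}$; so limits of vectors lying in $\shP_{\mathrm{mix}}|_{\check{M}}$ need not lie in the fiber over a singular point. This is precisely why the paper's proof argues with objects that are involutive \emph{globally}: $\shP_{J}$ (integrability of $J$), $\shD_{\CC}$ and $\shI_{\CC}$ (Lemma \ref{lem3-0-1}, whose computation $d\mu([u_{1},u_{2}])=0$ is valid on all of $M$), and the identity $\shL_{\xi^{\#}}J=0$, which holds everywhere; the case-by-case bracket computation then makes sense over all of $M$. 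To close the gap, keep your $\check{M}$-splitting for the rank and Lagrangian conditions (which Definition \ref{def4-7} only imposes on $\check{M}$), but replace the Frobenius and $N$-invariance arguments in the involutivity step by these global ones.
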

 In order to construct a one-parameter family of K\"ahler polarizations joining $\shP_{J}$ and $\shP_{\mathrm{mix}}$, we need to choose a convex function on $\fot^{*}$.
 
   \begin{enumerate}
\item[$(*):$] Assume $(*_{0})$ and pick a strictly convex function $\varphi: \fot^{*} \rightarrow \RR$. We denote the Hamiltonian vector field associated to the composition $\varphi \circ \mu$ by $X_{\varphi}$. 
\end{enumerate} 
Then the imagine time flow $e^{-itX_{\varphi}}$ would give us a family of complex structures $J_{t}$ on $M$ for small $t$ (see \cite{MN}). Our next result show that $J_{t}$ exists for all $t \ge 0$.
 
 \begin{theorem}(Theorem \ref{thm3-3})
Under the assumption $(*)$, for any $t >0$, there exists a complex structure $J_{t}$ given by applying $e^{-itX_{\varphi}}$ to $J$-holomorphic coordinates and a unique biholomorphism:
 $$\psi_{t}: (M,J_{t}) \rightarrow (M,J).$$
\end{theorem}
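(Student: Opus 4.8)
The plan is to build $J_t$ directly from the prescription in the statement---apply the imaginary-time flow $e^{-itX_\varphi}$ to local $J$-holomorphic coordinates---and then to extract $\psi_t$ from the same data. First I would record the structural fact that drives everything: since $\varphi\circ\mu$ is $T^{n}$-invariant and $\mu$ is the moment map, the Hamiltonian field decomposes as $X_\varphi=\sum_{j}(\partial_j\varphi\circ\mu)\,\xi_j$, where $\xi_1,\dots,\xi_n$ generate the action, so $X_\varphi$ is tangent to the torus orbits and, because $\mu$ is constant along orbit directions, each coefficient $\partial_j\varphi\circ\mu$ is annihilated by $X_\varphi$. Consequently, on a $J$-holomorphic chart $(z_1,\dots,z_m)$ the iterated derivatives $X_\varphi^{k}z_\alpha$ can be computed explicitly, and the functions $w_\alpha:=e^{-itX_\varphi}z_\alpha=\sum_{k\ge 0}\frac{(-it)^{k}}{k!}X_\varphi^{k}z_\alpha$ are well-defined and depend on $t$ in a way governed by the first derivatives $\partial_j\varphi\circ\mu$.

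Next I would declare the $w_\alpha$ to be $J_t$-holomorphic coordinates and verify that this defines a genuine $\omega$-compatible complex structure. The key point---and the place where strict convexity is essential---is that the coordinate change $z\mapsto w$ is a local diffeomorphism for every $t>0$: its Jacobian is controlled by the Hessian $\operatorname{Hess}\varphi$, which is positive-definite, so the change is invertible for all $t$ and never degenerates. Integrability of $J_t$ is then automatic, and because the same globally-defined operator $e^{-itX_\varphi}$ (built from the global data $\mu$ and $\varphi$) is applied in every chart, the $w_\alpha$ transform consistently and glue to a global $J_t$. For $\omega$-compatibility I would use that the complex-time Hamiltonian flow preserves $\omega$---this is the analytic continuation to imaginary time of $\mathcal{L}_{X_\varphi}\omega=0$---so that $\omega$ is unchanged while $J$ is deformed to $J_t$, and strict convexity again guarantees that $\omega(\cdot,J_t\cdot)$ stays positive-definite.

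The \emph{main obstacle} is to push the construction through for all $t>0$ globally, and in particular across the singular locus $M\setminus\check{M}$ of orbits with nontrivial stabilizer, where the fields $\xi_j$ and $J\xi_j$ degenerate and the explicit chart computation above breaks down. To handle this I would invoke Sjamaar's holomorphic slice theorem to put the action into local normal form near such an orbit; in these slice coordinates the imaginary-time flow again becomes explicit, so one can check directly that the $w_\alpha$, and hence $J_t$, extend smoothly across the lower-dimensional strata and that no trajectory escapes in finite time (when $M$ is noncompact one also uses properness of $\mu$, while for compact $M$ completeness is immediate). This local-to-global matching along the singular strata is the step I expect to require the most care.

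Finally I would produce $\psi_t$. The coordinate prescription itself defines a map by $\psi_t^{*}z_\alpha=w_\alpha=e^{-itX_\varphi}z_\alpha$; concretely $\psi_t$ sends a point to the one whose $J$-coordinates equal the values of the $w_\alpha$, and the invertibility established above shows $\psi_t$ is a diffeomorphism. By construction $\psi_t$ pulls $J$-holomorphic functions back to $J_t$-holomorphic functions, so it is a biholomorphism $(M,J_t)\to(M,J)$. Uniqueness follows because any biholomorphism realizing $z_\alpha\mapsto w_\alpha$ must agree with $\psi_t$ on each chart by the identity principle, and since the $w_\alpha$ separate points these local identifications determine $\psi_t$ globally.
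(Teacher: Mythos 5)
Your skeleton matches the paper's (Sjamaar slices as local models, apply $e^{-itX_{\varphi}}$ to $J$-holomorphic coordinates, glue, read off $\psi_t$ from the coordinate relation), but the two steps you treat as essentially automatic are exactly where the substance of the paper's proof lies, and as stated both have genuine gaps.

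First, the non-degeneracy for all $t>0$. You claim the Jacobian of $z\mapsto e^{-itX_{\varphi}}z$ ``is controlled by $\operatorname{Hess}\varphi$, which is positive-definite, so the change is invertible for all $t$.'' Convexity of $\varphi$ alone cannot give this. On the local model the relevant matrix has the block form
$$\begin{pmatrix} I+tH_{\varphi}A & tH_{\varphi}A\\ tH_{\varphi}A & I+tH_{\varphi}A\end{pmatrix},$$
with determinant $\det(I+2tH_{\varphi}A)$, where $A=(\partial\mu_i/\partial\tilde{w}_j)$ records how the moment map varies in the complex coordinates. If $A$ had a negative eigenvalue, $H_{\varphi}A$ could have negative spectrum and the determinant would vanish at some finite $t>0$; strict convexity of $\varphi$ does nothing to prevent this. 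The missing input is Burns--Guillemin (Theorem \ref{thm2-0}, \cite{BG}): choosing a $T^{n}$-invariant K\"ahler potential $\rho$ with $\mu_j=w_j\partial\rho/\partial w_j$, the plurisubharmonicity of $\rho$ makes $A$ positive definite, and only then does positivity of $H_{\varphi}$ force $H_{\varphi}A$ to have positive eigenvalues, hence invertibility for all $t\ge 0$. This is why the paper's Theorem \ref{thm3-1} leans on \cite{BG}; your argument never uses the K\"ahler structure at this point, and without it the step fails.

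Second, the gluing. You assert that because ``the same globally-defined operator $e^{-itX_{\varphi}}$ is applied in every chart, the $w_{\alpha}$ transform consistently.'' For real time this would be automatic, since $e^{tX}$ is pullback by an honest flow and hence commutes with composition; but $e^{-itX_{\varphi}}$ is an imaginary-time Lie series of differential operators, not the pullback by any diffeomorphism, so there is no a priori reason it commutes with composition by the holomorphic transition functions $\phi_{\alpha\beta}$. What is needed is precisely the commuting formula $e^{-itX_{\varphi}}(\phi_{\alpha\beta}\circ z_{p_{\beta}})=\phi_{\alpha\beta}\circ(e^{-itX_{\varphi}}z_{p_{\beta}})$, which the paper establishes as Theorem \ref{thm3-2-0} (via Lemmas \ref{lem3-1-1}, \ref{lem3-2-0}, \ref{lem3-1-2} and Gr\"obner's theorem \cite{Gr}) by expanding $\phi_{\alpha\beta}$ in power series, commuting the operator past truncations, and justifying the interchange of limits; holomorphy of $\phi_{\alpha\beta}$ and $T^{n}$-invariance of the coefficients $\partial\varphi/\partial\mu_j$ are both essential. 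Without this formula the local structures $J_t$ need not agree on overlaps and $\psi_t$ is not even well defined. Your uniqueness argument for $\psi_t$ is fine once these two points are repaired.
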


 First we prove long time existence for the imaginary time flow $e^{-itX_{\varphi}}$ on local models. Such local models are given by the work of Sjamaar on holomorphic slices in \cite{Sj}. In this proof, we use the work of Burns and Guillemin \cite{BG} on the relationship between K\"ahler potentials and moment maps in our setting. Second we need to prove a commuting formula (see equation \ref{eq-com}) which guarantees that these local models can be glued together and gives a complex structure $J_{t}$ on the whole manifold $M$. 
 
 In Theorem \ref{thm3-4-1}, we verify $(M,\omega, J_{t})$ is a K\"ahler manifold. This gives us a one-parameter family of K\"ahler metrics $g_{t}=\omega(-,J_{t})$ on $(M, \omega)$. As observed by Donaldson in \cite{Do1}, these can be regarded as a family of K\"ahler metrics within a fixed K\"ahler class $[\omega]$ on a fixed complex manifold $(M, J)$. The space $\shH_{\omega}$ of these K\"ahler metrics was studied by Semmes in \cite{Se} and Donaldson in \cite{Do1}. In particular, $\shH_{\omega}$ is an infinite dimensional symmetric space of nonpositive curvature. The family of K\"ahler metrics $g_{t}$'s we constructed above is always an geodesic ray in $\shH_{\omega}$ by using the work of Mour\~{a}o and Nunes in \cite{MN}.
 
 \begin{theorem}(Theorem \ref{thm3-4-1})
Under the assumption $(*)$, for any $t \ge 0$, $(M,\omega, J_{t})$ is a K\"ahler manifold. Moreover the path of K\"ahler metrics $g_{t}=\omega(-,J_{t}-)$ is a complete geodesic ray in the space of K\"ahler metrics of $M$. \end{theorem}

As $t$ goes to infinity, even through the metrics $g_{t}$'s does not have a limit, the corresponding polarizations $\shP_{t}$ does converge to a mixed polarization, namely $\shP_{\mathrm{mix}}$.
Thus we obtain a complete geodesic ray joining $\shP_{J}$ and $\shP_{\mathrm{mix}}$.
 
\begin{theorem}(Theorem \ref{thm3-5})
Under the assumption $(*)$,
let $J_{t}$ be the one-parameter family of complex structures constructed in Theorem \ref{thm3-3}. Then we have
$$\lim_{t\rightarrow \infty} \shP_{t}= \shP_{\mathrm{mix}}.$$
That is, $\lim_{t\rightarrow \infty} (\shP_{t})_{p}= (\shP_{\mathrm{mix}})_{p}$,  where the limit is taken in the Lagrangian Grassmannian of the complexified tangent space at point $p \in M$. 
\end{theorem}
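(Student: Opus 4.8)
The plan is to prove the convergence pointwise in the Lagrangian Grassmannian of $(T_pM\otimes\CC,\omega_{\CC})$, exploiting that both $\shP_t = T^{0,1}_{J_t}M$ and $\shP_{\mathrm{mix}}$ split along a fixed $\omega$-orthogonal decomposition of $T_pM$ into ``orbit'' and ``slice'' directions. Fix $p\in M$. Since the torus is abelian its orbits are isotropic, so the moment map identity $\iota_{X_\xi}\omega = d\langle\mu,\xi\rangle$ gives $\shD_p = \ker d\mu_p = \shI_p^{\omega}$; in particular $\shI_p\subseteq\shD_p$. Set $\shS_p := \shD_p\cap J\shD_p$, the largest $J$-invariant subspace of $\shD_p$, and $N_p := \shI_p\oplus J\shI_p = \shS_p^{\omega}$. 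On the open dense set $\check M$ the moment metric $G_{ij} = g(X_i,X_j) = \omega(X_i,JX_j)$ is positive definite, which forces $\shI_p\cap J\shI_p = 0$, $N_p\cap\shS_p = 0$, and the $J$-invariant splitting $T_pM = N_p\oplus\shS_p$ with $\shD_p = \shI_p\oplus\shS_p$. A short linear-algebra computation then identifies $\shP_{J}\cap\shD_{\CC} = \shS_p^{0,1}$ (the $J$-antiholomorphic part of $\shS_p$) and $\shP_{J}\cap\shI_\CC = 0$, so that $\shP_{\mathrm{mix}} = \shS_p^{0,1}\oplus\shI_\CC$ with $\dim_\CC\shP_{\mathrm{mix}} = (m-n)+n = m$.

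First I would show that the family $J_t$ respects this splitting. The generator $X_\varphi = \sum_j(\partial_j\varphi\circ\mu)X_j$ lies in $\shI_p$, and in the local holomorphic slices of Sjamaar (with K\"ahler potentials normalized as in Burns--Guillemin, already used in the proof of Theorem \ref{thm3-3}) the imaginary time flow $e^{-itX_\varphi}$ acts only in the complexified group directions $N_{p,\CC}$ and leaves the holomorphic slice pointwise fixed. Hence $J_t|_{\shS_p} = J|_{\shS_p}$ for all $t$, so $\shS_p^{0,1}\subseteq\shP_t$; and since $(M,\omega,J_t)$ is K\"ahler (Theorem \ref{thm3-4-1}), $J_t$ preserves $\omega$ and $\shS_p$, hence also $N_p = \shS_p^{\omega}$. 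This yields, for every $t$,
\[
\shP_t = \shS_p^{0,1}\oplus(\shP_t\cap N_{p,\CC}), \qquad \shS_p^{0,1} = \shP_{J}\cap\shD_{\CC} \ \text{(constant in } t\text{)}.
\]
It therefore remains to prove $\shP_t\cap N_{p,\CC}\to\shI_\CC$.

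On $N_p$ I would run the abelian computation of Baier--Florentino--Mour\~{a}o--Nunes. Choose action--angle coordinates $(x,\theta)$ adapted to the orbit, so that $\mu = (x_1,\dots,x_n)$, $X_j = \partial_{\theta_j}$ span $\shI_p$, and $z_j = \sqrt{x_j}\,e^{i\theta_j}$ are $J$-holomorphic. By the explicit description of $J_t$ in Theorem \ref{thm3-3}, the $J_t$-holomorphic coordinates on $N_p$ are $z_j^{(t)} = \rho_j(x)e^{i\theta_j}$ with $\log\rho_j(x) = \tfrac12\log x_j + t\,\partial_j\varphi(x)$. Writing $V = \sum_k a_k\partial_{x_k} + \sum_j b_j\partial_{\theta_j}$, the condition $dz_j^{(t)}(V) = 0$ becomes $b = iA_t a$ with
\[
A_t = \tfrac12 D^{-1} + t\,\Phi, \qquad D = \Diag(x_1,\dots,x_n), \qquad \Phi = \big(\partial_j\partial_k\varphi\big)_{j,k}.
\]
Thus $\shP_t\cap N_{p,\CC}$ is the column span of $\begin{pmatrix} I_n \\ iA_t\end{pmatrix}$. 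Because $\varphi$ is strictly convex, $\Phi$ is positive definite, so $A_t$ is invertible for $t$ large and the same subspace is the column span of $\begin{pmatrix} A_t^{-1} \\ iI_n\end{pmatrix}$. Since $A_t^{-1}\to 0$ as $t\to\infty$, this converges in the Grassmannian to the column span of $\begin{pmatrix} 0 \\ iI_n\end{pmatrix} = \Span_{\CC}\{\partial_{\theta_1},\dots,\partial_{\theta_n}\} = \shI_\CC$. Combined with the previous step, $\lim_{t\to\infty}\shP_t = \shS_p^{0,1}\oplus\shI_\CC = \shP_{\mathrm{mix}}$ at every $p\in\check M$.

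Finally, at a point $p$ with nontrivial stabilizer $H$ the same scheme applies on Sjamaar's stratified holomorphic slice: the effectively acting quotient torus $T^n/H$ contributes an abelian block exactly as above, with reduced Hessian still invertible, so its part of $\shP_t$ converges to the corresponding orbit directions, while the $H$-fixed directions are absorbed into the slice $\shS_p$; the identity $\shP_{\mathrm{mix}} = (\shP_{J}\cap\shD_{\CC})\oplus\shI_\CC$ is consistent across strata because $\dim\shI_p$ drops and $\dim\shD_p$ rises by the same amount. I expect the main obstacle to be precisely this last point together with the claim $J_t|_{\shS_p} = J|_{\shS_p}$: one must verify, via the commuting formula \ref{eq-com} and the local normal form used in Theorem \ref{thm3-3}, that the imaginary time flow genuinely leaves the holomorphic slice undeformed, and that the convergence above persists up to the closed strata, so that the limit equals $\shP_{\mathrm{mix}}$ at every point of $M$ and not only on $\check M$.
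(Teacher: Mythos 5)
Your overall strategy coincides with the paper's: work in Sjamaar's local model at $p$, split into orbit-type and slice directions, observe that the slice part of $\shP_t$ does not move, and show that in the orbit directions the $J_t$-antiholomorphic space is the graph of $i A_t$ with $A_t^{-1} = (\text{positive definite}) + t\,(\text{Hessian of }\varphi)$, so $A_t \to 0$ and the graph converges to $\shI_\CC$. On $\check{M}$ your argument has the right structure, but your explicit formula $A_t=\tfrac12 D^{-1}+t\Phi$ rests on the claim that $z_j=\sqrt{x_j}\,e^{i\theta_j}$ can be taken $J$-holomorphic in action-angle coordinates. That is the flat model and is false for a general $T$-invariant K\"ahler structure: the action variables and the angles are canonically determined by $\omega$ and the action, so ``$\sqrt{x_j}e^{i\theta_j}$ is holomorphic'' is a genuine restriction on $J$, not a choice. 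What is true, and what the paper proves via Burns--Guillemin (Theorem \ref{thm2-0}) together with plurisubharmonicity of the invariant potential, is that the $J$-holomorphic coordinates are $w_j=e^{y_j+i\theta_j}$ where $(\partial y_i/\partial \mu_j)$ is a positive definite matrix (conjugate by $\Diag(w_1,\dots,w_{n-k})$ to a principal block of the complex Hessian of the potential). Replacing $\tfrac12 D^{-1}$ by this Legendre matrix, your limit argument goes through verbatim, since positive definite plus $t\Phi$ still has inverse tending to $0$.

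The genuine gap is the locus outside $\check{M}$, which you flag yourself but whose proposed treatment is flawed. Your justification of $J_t|_{\shS_p}=J|_{\shS_p}$ is that the imaginary-time flow ``leaves the holomorphic slice pointwise fixed.'' At a point whose stabilizer has dimension $k>0$ this is false: the stabilizer torus $T^{k}$ acts nontrivially (unitarily) on the slice factor $\CC^{r}$, and Theorem \ref{thm3-1-0} gives $z_l^{t}=z_l\,e^{t\sum_{\gamma}\frac{\partial \varphi}{\partial \mu_{\gamma}}b_{\gamma l}}\neq z_l$, so the flow genuinely moves the slice. What rescues the argument --- and this is precisely how the paper's proof works uniformly at \emph{every} $p\in M$, with no stratification or limiting-across-strata argument at all --- is that the computation is performed at the center $p=[(1,0)]$ of its own slice, where $z=0$: there $dz_l^{t}=e^{t(\cdots)}\,dz_l$ is a scalar multiple of $dz_l$, so $\mathrm{span}\{\partial/\partial\bar z_l^{t}\}=\mathrm{span}\{\partial/\partial\bar z_l\}$ at $p$ for all $t$, while only the $n-k$ directions $\partial/\partial\bar w_j^{t}$ move, converging to $\partial/\partial\theta_j$ by the same matrix estimate as in the free case. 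With that substitution your outline becomes the paper's proof; as written, the singular points remain unproved.
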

We first reduce the proof to local models by using Sjamaar's work on the existence of holomorphic slices. Then we use Burns and Guillemin's results, plurisubharmonicity of $T^{n}$-invariant K\"ahler potential, and the convexity of $\varphi$ to show the existence of $\lim_{t\rightarrow \infty} \shP_{t}$.

There are many previous works by others on closely related problems for toric varieties \cite{BFMN, CS, KMN1,KMN4}, flag varieties \cite{GS3, HK}, cotangent bundles of compact Lie groups \cite{FMMN1, FMMN2, Hal, KMN2, MNP}, toric degenerations \cite{HHK, HK3}, and so on \cite{An1,An2,An3,BBLU, CLL, HK1, HK2, Hi, KW, LY1, LY2, MNR, RZ1, Th}.
 In the sequels, we will study geometric quantizations of these mixed polarizations in \cite{LW2} and analyze limit of geometric quantizations of K\"ahler polarizations along the complete geodesic rays studied in this paper in the case of the K\"ahler manifolds with $T$-symmetry in \cite{LW3}.
\subsection*{Acknowledgement}
We would like to thank Siye Wu for insightful comments and helpful discussions. We also thank the referees for valuable comments and suggestions for improvement.
This research was substantially supported by grants from the Research Grants Council of the Hong Kong Special Administrative Region, China (Project No. CUHK14301619 and CUHK14301721) and a direct grant from the Chinese University of Hong Kong.
\section{Preliminaries}
 
 In this section, we review results needed in the proof of Theorem \ref{thm3-3} including Burns-Guillemin's theorem in \cite{BG} and Sjamaar's holomorphic slices in \cite{Sj}.
 \subsection{Hamiltonian action}
Let $(M,\omega )$ be a symplectic manifold. For $f\in C^{\infty }(M,\mathbb{R
})$, the Hamiltonian vector field $X_{f}$ associated to $f$ is determined by 
$\imath _{X_{f}}\omega =-df$. Then the Poisson bracket of two functions $f, g \in C^{\infty}(M;\RR)$ can be defined by:
 $\{f, g\} = \omega(X_{f}, X_{g})$ and $$\psi:  (C^{\infty}(M),\{\}) \rightarrow \mathrm{Vect}(M,\omega)$$ is a Lie algebra homomorphism. Let $T^{n}$ be a torus of real dimension $n$
and $\rho :T^{n}\rightarrow \mathrm{Diff}(M,\omega )$ an action of $T^{n}$
on $M$ which preserves $\omega $. Differentiating $\rho $ at the identity
element, we have 
\begin{equation*}
d\rho :\mathfrak{t}\rightarrow \mathrm{Vect}(M,\omega ),~~~~\xi \mapsto \xi
^{\#}
\end{equation*}
where  $\mathfrak{t}$ is the Lie algebra of $T^{n}$ and $\xi ^{\#}$ is
called the fundamental vector field associated to $\xi $. 
The action of $T^{n}$ on $M$ is said to be {\em Hamiltonian} if $d\rho$ factors through $\psi$.
This gives a $T^{n}$-equivariant map
$\mu :M\rightarrow \mathfrak{t}^{* }$
called the \emph{moment mapping}, satisfying:
$$\omega(-, \xi^{\#})=d\mu^{\xi}.$$
\subsection{Polarizations on symplectic manifolds}

A step in the process of geometric quantization is to choose a polarization.
We first recall the definitions of distributions and polarizations on
symplectic manifolds $(M,\omega )$ (See \cite{Wo}). All polarizations
discussed in this subsection are smooth.

\begin{definition}
\label{def4-1}A \emph{complex distribution} $\mathcal{P}$ on a manifold $M$
is a complex sub-bundle of the complexified tangent bundle $TM\otimes 
\mathbb{C}$. 
When $\left( M,\omega \right) $ is a symplectic manifold, such a $\mathcal{P}
$ is a \label{def4-2}  \emph{complex polarization} if it satisfies the
following conditions:
\begin{enumerate}
\item $\mathcal{P}$ is involutive, i.e. if $u,v \in \Gamma(M,\mathcal{P})$,
then $[u,v] \in \Gamma(M,\mathcal{P})$;
\item for every $x \in M$, $\mathcal{P}_{x} \subseteq T_{x}M \otimes {
\mathbb{C}}$ is Lagrangian; and
\item  $\mathrm{rk}_{\RR}\left( \mathcal{P}\right):=\mathrm{rank}(\mathcal{P}\cap \overline{
\mathcal{P}}\cap TM)$ is constant.
\end{enumerate}
Furthermore, $\shP$ is called
\begin{enumerate}
\item  [$\cdot$]{\em real polarization}, if $\mathcal{P}=\overline{\mathcal{P}}$,
i.e. $\mathrm{rk}_{\RR}\left( \mathcal{P}\right) =m$; 
\item  [$\cdot$] {\em K\"ahler polarization}, if $\mathcal{P}\cap \overline{
\mathcal{P}}=0$, i.e. $\mathrm{rk}_{\RR}\left( \mathcal{P}\right) =0$; 
\item [$\cdot$] {\em mixed polarization}, if $0 < \rank(\shP \cap \overline{\shP} \cap TM) < m$, i.e. $0<\mathrm{rk}_{\RR}\left( \mathcal{P}\right) <m$. 
\end{enumerate}
\end{definition}
\subsection{Burns-Guillemin's theorem}

In this subsection, we recall an equivariant Darboux theorem for K\"ahler forms on $N = U \times T_{\mathbb{C}}^{n}$ and the relationship between Kähler potentials and moment maps proved by Burns and Guillemin in \cite{BG}, as follows, where $U$ is an open and convex subset of $\mathbb{C}^{m-n}$. Assume $T^{n}$ act on $N$ is given by the standard multiplication of $T^{n}$ on $T^{n}_{\CC}$. Let $w_{1}, \cdots ,w_{m-n}$ and $z_{1}, \cdots, z_{n}$ be the standard coordinate functions on $U$ and $T^{n}_{\CC}$. Let $\omega$ be any $T^{n}$-invariant K\"ahler form, which is Hamiltonian with respect to the action of $T^{n}$.  
Burns and Guillemin in \cite{BG} showed that:

\begin{enumerate}
\item $\omega= \sqrt{-1} \partial \bar{\partial} \rho$,
 where $\rho$ is a $T^{n}$-invariant function. Namely,
\begin{equation}\label{eq-po} \rho = \rho(w_{1},\cdots,w_{m-n},t_{1},\cdots,t_{n}), t_{i}=|z_{i}|^{2}. \end{equation}
\item If $\rho_{1}$ and $\rho_{2}$ are two $T^{n}$-invariant functions such that $\omega= \sqrt{-1} \partial \bar{\partial} \rho_{i}$, then there exist $\lambda_{i} \in \RR$ and a holomorphic function $Q$ on $U$ such that 
$$\rho_{2} -\rho_{1} = \sum_{i} \lambda_{i} \log t_{i} + \mathrm{Re} Q.$$
\end{enumerate}

\begin{theorem}\label{thm2-0} \cite[Theorem 3.1] {BG} Let $\rho$ and $\rho_{1}$ be strictly plurisubharmonic functions of the form (\ref{eq-po}). If the symplectic forms and moment maps associated with $\rho$ and $\rho_{1}$ are the same, then there exists a holomorphic function $Q$ on $U$ such that 
$$\rho_{1}= \rho +\mathrm{Re}Q.$$
Let $\mu=(\mu_{1}, \cdots, \mu_{n})$ be the moment map associated with the action of $T^{n}$ on $N$. Under the change of variables $t_{j}=e^{s_{j}}$, where $t_{j}=|z_{j}|^{2}$, one has:
$$\mu_{j} = \frac{\partial}{\partial s_{j}} f(w_{1},\cdots,w_{m-n}, s_{1}, \cdots,s_{n} )=  t_{j} \frac{\partial}{\partial t_{j}} \rho(w_{1},\cdots,w_{m-n}, t_{1},\cdots,t_{n}).$$
where $f(w_{1},\cdots,w_{m-n}, s_{1}, \cdots,s_{n} )= \rho(w_{1},\cdots,w_{m-n}, t_{1},\cdots,t_{n})$.
\end{theorem}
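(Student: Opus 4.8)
The plan is to reduce both assertions to the single explicit identity $\mu_{j}=t_{j}\,\partial_{t_{j}}\rho$, and then to invoke the uniqueness statement recalled just before the theorem (that two $T^{n}$-invariant potentials for the same $\omega$ differ by $\sum_{i}\lambda_{i}\log t_{i}+\mathrm{Re}\,Q$) in order to kill the logarithmic terms. So I would first prove the moment-map formula, and only afterwards deduce $\rho_{1}=\rho+\mathrm{Re}\,Q$.

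First I would establish the formula for $\mu_{j}$. The fundamental vector field generating the $j$-th circle factor of $T^{n}$ acting on $T^{n}_{\CC}$ by multiplication is the generator of the rotation $z_{j}\mapsto e^{\sqrt{-1}\,\theta}z_{j}$, namely $X_{j}=\sqrt{-1}\,(z_{j}\partial_{z_{j}}-\bar{z}_{j}\partial_{\bar{z}_{j}})$, which is a real vector field. Since $\rho$ has the invariant form (\ref{eq-po}), it depends on $z_{j}$ only through $t_{j}=|z_{j}|^{2}$, so that $\partial_{\bar{z}_{j}}\rho=z_{j}\,\partial_{t_{j}}\rho$ and likewise $\partial_{z_{j}}\rho=\bar{z}_{j}\,\partial_{t_{j}}\rho$. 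Contracting $X_{j}$ into $\omega=\sqrt{-1}\,\partial\bar{\partial}\rho$ and using these relations together with $z_{j}\,d\bar{z}_{j}+\bar{z}_{j}\,dz_{j}=dt_{j}$, a direct computation gives $\imath_{X_{j}}\omega=-\,d\!\left(t_{j}\,\partial_{t_{j}}\rho\right)$. Because $\omega$ is assumed Hamiltonian, the function $t_{j}\,\partial_{t_{j}}\rho$ is a moment-map component (globally defined on $N$, since $t_{j}$ and $\rho$ are), and under $t_{j}=e^{s_{j}}$ one has $\partial_{s_{j}}=t_{j}\,\partial_{t_{j}}$, so that $\mu_{j}=\partial_{s_{j}}f=t_{j}\,\partial_{t_{j}}\rho$.

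With the formula in hand, I would prove the rigidity statement. Since $\rho$ and $\rho_{1}$ are strictly plurisubharmonic with $\sqrt{-1}\,\partial\bar{\partial}\rho=\omega=\sqrt{-1}\,\partial\bar{\partial}\rho_{1}$, the uniqueness property recalled above produces real constants $\lambda_{1},\dots,\lambda_{n}$ and a holomorphic function $Q$ on $U$ with $\rho_{1}-\rho=\sum_{i}\lambda_{i}\log t_{i}+\mathrm{Re}\,Q$. Applying the operator $t_{j}\,\partial_{t_{j}}$ to both sides, and noting that $\mathrm{Re}\,Q$ is independent of $t_{1},\dots,t_{n}$ (as $Q$ is holomorphic on $U$) while $t_{j}\,\partial_{t_{j}}\log t_{i}=\delta_{ij}$, the moment-map formula yields $\mu_{j}^{\rho_{1}}-\mu_{j}^{\rho}=\lambda_{j}$. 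The hypothesis that the two moment maps coincide then forces $\lambda_{j}=0$ for every $j$, whence $\rho_{1}=\rho+\mathrm{Re}\,Q$, as claimed.

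The substantive input is the uniqueness statement (the $T^{n}$-invariant $\partial\bar{\partial}$-lemma on $U\times T^{n}_{\CC}$), which I am taking as given; granting it, the only genuine work is the moment-map computation together with the bookkeeping of normalizations. The point to handle with care is precisely this normalization: the moment map of a torus action is a priori determined only up to an additive constant, so the phrase ``the moment maps are the same'' must be read with the canonical primitive $\mu_{j}=t_{j}\,\partial_{t_{j}}\rho$ fixed on both sides; it is with this normalization that the constant $\lambda_{j}$ is read off as the difference of moment-map components and is therefore seen to vanish. A secondary check is that the identity $\imath_{X_{j}}\omega=-\,d(t_{j}\,\partial_{t_{j}}\rho)$ holds globally and that the mixed derivatives involving $w$ and $z_{j}$ in $\partial\bar{\partial}\rho$ do not spoil the contraction, which again follows from the invariance relations $\partial_{\bar{z}_{j}}\rho=z_{j}\,\partial_{t_{j}}\rho$.
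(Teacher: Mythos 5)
Your proposal is correct, but there is nothing in the paper to compare it against: Theorem \ref{thm2-0} appears in the Preliminaries and is quoted as \cite[Theorem 3.1]{BG} without proof, as background for the proofs of Theorems \ref{thm3-1} and \ref{thm3-5}. So the only meaningful check is whether your blind reconstruction is sound, and it is.

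The contraction identity at the heart of your argument does hold: with $X_{j}=\sqrt{-1}\,(z_{j}\partial_{z_{j}}-\bar{z}_{j}\partial_{\bar{z}_{j}})$ and the invariance relations $\partial_{z_{j}}\rho=\bar{z}_{j}\,\partial_{t_{j}}\rho$, $\partial_{\bar{z}_{j}}\rho=z_{j}\,\partial_{t_{j}}\rho$, one gets
\begin{equation*}
\imath_{X_{j}}\omega \;=\; -\Bigl(\bar{z}_{j}\,\partial\bigl(\partial_{\bar{z}_{j}}\rho\bigr)+z_{j}\,\bar{\partial}\bigl(\partial_{z_{j}}\rho\bigr)\Bigr)\;=\;-\,d\bigl(t_{j}\partial_{t_{j}}\rho\bigr),
\end{equation*}
which is consistent with the paper's convention $\omega(-,\xi^{\#})=d\mu^{\xi}$, and $\partial_{s_{j}}=t_{j}\partial_{t_{j}}$ is immediate from $t_{j}=e^{s_{j}}$. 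Your rigidity step is likewise correct, and the normalization caveat you raise is genuinely the crux rather than a pedantic aside: if ``the moment maps are the same'' were read only up to additive constants, the statement would be false --- take $\rho_{1}=\rho+\log t_{1}$, which is still strictly plurisubharmonic of the form (\ref{eq-po}), yields the same $\omega$ since $\log|z_{1}|^{2}$ is pluriharmonic on $T^{n}_{\CC}$, and shifts the moment map by a constant, yet $\log t_{1}$ is not $\mathrm{Re}\,Q$ for any $Q$ holomorphic on $U$. Fixing the canonical primitive $\mu_{j}=t_{j}\partial_{t_{j}}\rho$ on both sides, as you do, applying $t_{j}\partial_{t_{j}}$ to $\rho_{1}-\rho=\sum_{i}\lambda_{i}\log t_{i}+\mathrm{Re}\,Q$ kills $\mathrm{Re}\,Q$ and reads off $\lambda_{j}=0$. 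The one input you take as given --- the decomposition of the difference of two invariant potentials --- is exactly the statement the paper also recalls from \cite{BG} immediately before the theorem, so granting it is legitimate in this context; a fully self-contained proof would additionally require establishing that invariant $\partial\bar{\partial}$-lemma on $U\times T^{n}_{\CC}$, which neither you nor the paper attempts.
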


\subsection {Sjamaar's holomorphic slices} In this subsection, we recall Sjamaar's work (see \cite{Sj}) on the
existence of holomorphic slices. Let $G$ be a compact, connected Lie group.

\begin{theorem}
\label{thm2-2}\cite[theorem 1.12]{Sj} Let $M$ be a K\"{a}hler
manifold with a Hamiltonian $G$-action by holomorphic isometries. Given any
point $p$ lying on an isotropic $G$-orbit in $M$, there exists a slice at $p$
for the $G^{\mathbb{C}}$-action. 
\end{theorem}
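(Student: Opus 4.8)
The plan is to reduce the statement to a linear model near $p$ and then build the slice by complexifying a complement to the orbit directions. Since $G$ is compact and acts by holomorphic isometries, the compact isotropy group $G_{p}$ fixes $p$ and acts holomorphically on a neighborhood; by Bochner's linearization theorem (in the holomorphic category) I would choose a $G_{p}$-equivariant holomorphic chart centered at $p$, so that, after shrinking, $M$ is identified with a $G_{p}$-invariant neighborhood of $0$ in the complex $G_{p}$-representation $V=T_{p}M$, with $p=0$ and $G_{p}$ acting linearly. I would also complexify the infinitesimal action: for $\xi,\eta\in\fog$ the assignment $(\xi+i\eta)\mapsto \xi^{\#}+J\eta^{\#}$ integrates to a local holomorphic $G^{\CC}$-action near $p$, and the goal becomes to produce a $G^{\CC}_{p}$-invariant complex submanifold $S\ni p$ for which the action map $G^{\CC}\times_{G^{\CC}_{p}}S\to M$ is a local biholomorphism onto a neighborhood of the orbit.

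The decisive linear-algebra input is the isotropy hypothesis. Using $\imath_{X_{f}}\omega=-df$ and the Poisson relation one has $\omega_{p}(\xi^{\#},\eta^{\#})=\langle\mu(p),[\xi,\eta]\rangle$, so the orbit $G\cdot p$ being isotropic means $\omega_{p}(\xi^{\#},\eta^{\#})=0$ for all $\xi,\eta$. Since $g=\omega(-,J-)$, this gives $g(\xi^{\#},J\eta^{\#})=-\omega_{p}(\xi^{\#},\eta^{\#})=0$, i.e. $\fog\cdot p\perp_{g}J(\fog\cdot p)$. Hence these two subspaces meet only in $0$, their sum $N:=\fog\cdot p\oplus J(\fog\cdot p)=\fog^{\CC}\cdot p$ is a complex $G_{p}$-subrepresentation of $V$, and, crucially, $\xi^{\#}_{p}+J\eta^{\#}_{p}=0$ forces $\xi^{\#}_{p}=\eta^{\#}_{p}=0$, so that $\fog^{\CC}_{p}=(\fog_{p})^{\CC}$. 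This is exactly what guarantees that $G^{\CC}_{p}$ is reductive, equal to the complexification of the compact group $G_{p}$; without isotropy the imaginary directions could enlarge the stabilizer and the construction would break. By complete reducibility of the compact representation I would then pick a $G_{p}$-invariant complex complement $W$ with $V=N\oplus W$, and $W$ is the candidate slice representation.

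With these pieces in place I would set $S=W\cap M$, a $G_{p}$-invariant complex submanifold through $p$, and extend the linear $G_{p}$-action on $W$ to the linear $G^{\CC}_{p}=(G_{p})^{\CC}$-action, which preserves $W$. Consider the holomorphic action map $\Phi\colon G^{\CC}\times_{G^{\CC}_{p}}S\to M$, $[g,s]\mapsto g\cdot s$. At the base point its differential is $(\zeta,w)\mapsto \zeta^{\#}_{p}+w$ from $(\fog^{\CC}/\fog^{\CC}_{p})\oplus W$ to $V$; since $\zeta\mapsto\zeta^{\#}_{p}$ is an isomorphism onto $N$, this differential is the identification $N\oplus W=V$, so the holomorphic inverse function theorem makes $\Phi$ a local biholomorphism at $[e,p]$. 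The whole construction can be carried out $G$-equivariantly, hence $\Phi$ is a local biholomorphism along the compact orbit $G\cdot p$; shrinking $W$ to a small $G^{\CC}_{p}$-invariant neighborhood then yields an honest slice.

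The hard part will be the passage from this infinitesimal, linearized picture to a genuine slice, because the $G^{\CC}$-action exists only locally and $G^{\CC}$-orbits need not be embedded. Two points require real work. First, one must make the chart and the complement $W$ not merely $G_{p}$-invariant but $G^{\CC}_{p}$-equivariant; here I would invoke the holomorphic-extension-of-equivariance principle, namely that a holomorphic map which is $G$-equivariant between spaces carrying holomorphic $G^{\CC}$-actions is automatically $G^{\CC}$-equivariant, since $G$ is totally real in $G^{\CC}$ and both sides of the equivariance identity are holomorphic in the group variable. Second, upgrading the local biholomorphism at the base point to injectivity of $\Phi$ on a full tube around $G\cdot p$ is the crux: it uses compactness of $G\cdot p$ together with the Kähler and moment-map structure (a Kempf--Ness type argument controlling the imaginary $G^{\CC}$-directions through the proper function $\|\mu\|^{2}$) to rule out extra identifications. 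This last step, rather than the linear algebra, is where the Kähler hypothesis and the isotropy of the orbit are genuinely indispensable.
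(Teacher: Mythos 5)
First, a remark on the benchmark: the paper does not prove this statement at all --- it is quoted verbatim from Sjamaar as a black box (the paper only ever \emph{uses} it, in the proof of Theorem \ref{thm3-3}), so the only meaningful comparison is with Sjamaar's original argument in \cite{Sj}. Your infinitesimal analysis reproduces the first step of that argument (Sjamaar's Proposition 1.6, which this paper also cites separately): isotropy of $G\cdot p$ gives $\omega_{p}(\xi^{\#},\eta^{\#})=0$, hence $g(\xi^{\#},J\eta^{\#})=0$, hence $\fog\cdot p\cap J(\fog\cdot p)=0$, hence $\fog^{\CC}_{p}=(\fog_{p})^{\CC}$. This part is correct, as are the Bochner linearization, the choice of a $G_{p}$-invariant complement $W$, the inverse-function-theorem statement at the base point, and the principle that a holomorphic $G_{p}$-equivariant map between holomorphic $G_{p}^{\CC}$-spaces is automatically $G_{p}^{\CC}$-equivariant. (One smaller omission: you only get the Lie-algebra equality $\fog^{\CC}_{p}=(\fog_{p})^{\CC}$; the group-level equality $(G^{\CC})_{p}=(G_{p})^{\CC}$ needs a separate polar-decomposition/moment-map argument, which Sjamaar supplies and you do not.)

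The genuine gap is the step you yourself call ``the crux'' and then do not carry out: establishing properties (2) and (4) of the slice definition, namely that $G^{\CC}S$ is open and that $G^{\CC}\times_{(H^{\CC})_{p}}S\to G^{\CC}S$ is an isomorphism. This is not a routine upgrade of your local statement, and your proposed route fails concretely in two places. First, ``shrinking $W$ to a small $G^{\CC}_{p}$-invariant neighbourhood'' is impossible whenever $(H^{\CC})_{p}$ has positive dimension: a slice must be $(H^{\CC})_{p}$-invariant, the $(H^{\CC})_{p}$-orbits of points arbitrarily close to $p$ are in general unbounded (already for $\CC^{*}$ acting by scaling on $\CC$), so no slice can be contained in the bounded Bochner chart in which your entire construction lives; the slice is intrinsically a global object. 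Second, the tool you gesture at for injectivity --- properness of $\|\mu\|^{2}$ in the style of Kempf--Ness --- is unavailable, since $M$ is not assumed compact and $\mu$ is not assumed proper; the mechanism that actually controls the non-compact directions of $G^{\CC}$ is the monotonicity of $t\mapsto\langle\mu(\exp(it\xi)\cdot x),\xi\rangle$, whose $t$-derivative is $\pm\|\xi^{\#}\|_{g}^{2}$, and Sjamaar combines this with equivariant Stein theory (Stein neighbourhoods of the totally real compact orbit $G\cdot p$, and Heinzner-type slice results) to produce an honest $(H^{\CC})_{p}$-invariant slice with open saturation. Finally, there is a foundational point you gloss over: conditions (2) and (4) presuppose a globally defined $G^{\CC}$-action on $M$ (a standing hypothesis in Sjamaar's paper that the quotation above suppresses), whereas integrating the vector fields $\xi^{\#}+J\eta^{\#}$ as you do only yields a local action, for which the saturation $G^{\CC}S$ and the map in (4) are not even defined. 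So what you have is a correct proof of the linear/infinitesimal half of the theorem together with an accurate diagnosis of where the difficulty lies, but the difficult half --- which is the theorem --- is missing.
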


We recall the definition of slices.

\begin{definition}
A slice at $p \in M$ for the $G^{\mathbb{C}}$-action is a locally closed
analytic subspace $S$ of $M$ with the following properties:

\begin{enumerate}
\item $p \in S$;

\item the saturation $G^{\mathbb{C}}S$ of $S$ is open in $M$;

\item $S$ is invariant under the action of the stabilizer $(H^{\mathbb{C}
})_{p}$;

\item the natural $G^{\mathbb{C}}$-equivariant map from $G^{\mathbb{C}}\times_{(H^{\mathbb{C}})_{p}} S$ into $M$, which sends $[g, y]$ to the point $g\cdot y$, is an analytic isomorphism onto 
$G^{\mathbb{C}}S$.
\end{enumerate}
\end{definition}

\begin{remark}
The condition that $p$ lying on an isotropic $G$-orbit in $M$ is equivalent
to $\mu \left( p\right) \in \mathfrak{g}^{\ast }$ is fixed under the
coadjoint action of $G$ \cite[theorem 1.12]{Sj}. Therefore every orbit is
isotropic if $G=T^{n}$ is Abelian, as in our situation. 
\end{remark}

\section{Main results}
\subsection{Construction of polarizations $\shP_{\mathrm{mix}}$ on K\"ahler manifolds with $T$-symmetry} \label{sec1}

 In this paper, we assume $(*_{0})$, i.e. $(M, \omega, J)$ is a compact K\"ahler manifold of real dimension $2m$, equipped with an effective Hamiltonian $n$-dimensional torus action $\rho: T^{n} \rightarrow \Diff(M, \omega, J)$ by isometries with moment map $\mu: M \rightarrow \fot^{*}$.
$$
\begin{tikzcd}[row sep=3em, column sep=3em]
T^{n} \arrow[bend left]{r}{\rho} & (M, \omega) \arrow{r}{\mu} & \fot^{*}.
\end{tikzcd}
$$
Let $H_{p}$ be the stabilizer of $T^{n}$ at point $p \in M$. Denote by $\check{M}$ the disjoint union of $n$-dimensional orbits, that is,
$$\check{M}= \{p \in M| \dim H_{p} =0\},$$
which is an open dense subset in $M$.
 To construct a geometric quantization, a crucial step is to choose a polarization (see Definition \ref{def4-2}) $\shP \subset TM\otimes \CC$, which is an integrable Lagrangian subbundle of the complexified tangent bundle. There is a natural K\"ahler polarization $\shP_{J}=T^{0,1}_{J}$ induced by the complex structure $J$.
  When $M$ a toric variety, i.e. $n=m$, it admits a singular real polarization given by $\mu$. 
 We are going to construct a singular polarization $\shP_{\mathrm{mix}}$ on $M$ by combining the $T^{n}$-action with the K\"ahler polarization $\shP_{J}$. We first construct a singular distribution of the form $$\shP_{\mathrm{mix}}=(\shP_{J} \cap \shD_{\CC}) \oplus \shI_{\CC}$$ (see Definition \ref{def3-0-1}). Then (see Theorem \ref{thm3-0-5}) we show that $\shP_{\mathrm{mix}}$ is a singular  polarization and smooth on $\check{M}$ with $\dim(\shP_{\mathrm{mix}} \cap \bar{\shP}_{\mathrm{mix}}  \cap TM)_{p}=n$ for any $p \in \check{M}$. 

We first define what we meant by a singular distribution (and polarizations).

\begin{definition}\label{def4-3}
 $\shP \subset  TM \otimes\CC$ is a {\em singular complex distribution} on $M$ if it satisfies:
$\shP_{p} $ is a vector subspace of $ T_{p}M \otimes \CC$, for all point $p \in M$.
Such a $\shP$ is called {\em smooth on $\check{M}$} if 
 $\shP|_{\check{M}}$ is a smooth sub-bundle of the tangent bundle $T\check{M} \otimes \CC$.
\end{definition}

\begin{remark}
In this paper, we only consider such distributions with mild singularities in the sense that they are only singular outside an open dense subset $\check{M}\subset M$. Under our setting, we define smooth sections of singular distributions and  {\em involutive distributions} as follows.
\end{remark}

\begin{definition}\label{def4-4}
Let $\shP$ be a singular complex distribution of $TM\otimes \CC$. For any open subset $U$ of $M$, {\em the space of smooth sections of $\shP$ on $U$} is defined by the smooth section of $TM\otimes \CC$ with value in $\shP$, that is,
$$ \Gamma(U, \shP) = \{ v \in \Gamma(U, TM \otimes \CC) \mid  v_{p} \in (\shP)_{p}, \forall  p\in U \}.$$
\end{definition}

\begin{definition}\label{def4-5}
Let $\shP$ be a singular complex distribution on $M$. $\shP$ is {\em involutive} if it satisfies:
$$[u,v] \in \Gamma(M,\shP), \mathrm{~for~ any~} u,v \in \Gamma(M, \shP).$$
\end{definition}

\begin{definition}\label{def4-7} Let $\shP$ be a singular complex distribution $\shP$ on $M$ and smooth on $\check{M}$. Such a $\shP$ is called a {\em singular polarization on $M$}, if it satisfies the following conditions:
\begin{enumerate} [label = (\alph*)]
\item $\shP$ is involutive, i.e. if $u,v \in \Gamma(M,\shP)$, then $ [u,v] \in \Gamma(M,\shP)$;
\item for every $x \in \check{M}$, $\shP_{p} \subseteq T_{p}M \otimes \CC$ is Lagrangian; and
\item the real rank $ \mathrm{rk}_{\RR}(\shP):=\rank(\shP \cap \overline{\shP} \cap TM)|_{\check{M}}$ is a constant.
\end{enumerate}
 Furthermore, such a singular $\shP$ is called 
 \begin{enumerate}
 \item   [$\cdot$] {\em real polarization}, if $\shP|_{\check{M}} = \overline{\shP}|_{\check{M}}$, i.e. $\mathrm{rk}_{\RR}(\shP|_{\check{M}})=m$;
 \item [$\cdot$]  {\em K\"ahler polarization}, if $\mathcal{P}_{\check{M}}\cap \overline{
\mathcal{P}}|_{\check{M}}=0$ on $\check{M}$, i.e. $r\left( \shP|_{\check{M}}\right) =0$; 
 \item  [$\cdot$]  {\em mixed polarization}, if $0 < \rank(\shP \cap \overline{\shP} \cap TM)|_{\check{M}} < m$, i.e. $0< \mathrm{rk}_{\RR}(\shP|_{\check{M}})<m$.
 \end{enumerate}
\end{definition}
   
    For any point $p \in M$, consider the map $\rho_{p}: T^{n} \rightarrow M$ defined by $\rho_{p}(g)= \rho(g)(p)$. Let $\shI_{\RR} \subset TM$ be the singular distribution generated by fundamental vector fields in $\Im d\rho$, that is $(\shI_{\RR})_{p} = \Im d\rho_{p}(e)$. Let $\shD_{\RR} = (\ker d\mu) \subset TM$ be  a distribution defined by the kernel of $d\mu$. 

\begin{definition}\label{def3-0-1}  
 Let $\shD_{\CC} = \shD_{\RR} \otimes \CC$ and $\shI_{\CC} = \shI_{\RR} \otimes \CC$ be the complexification of $\shD_{\RR}$ and $\shI_{\RR}$ respectively.
 We define the singular distribution $\shP_{\mathrm{mix}} \subset TM \otimes \CC$ by:
\begin{equation}
\shP_{\mathrm{mix}} = (\shP_{J} \cap\shD_{\CC} ) \oplus \shI_{\CC}.
\end{equation}
\end{definition}

\begin{lemma} \label{lem3-0-1}
 $\shI_{\CC}$ and $\shD_{\CC}$ are involutive and smooth on $\check{M}$.
\end{lemma}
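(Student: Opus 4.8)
The plan is to prove involutivity and smoothness separately for $\shI_{\CC}$ and $\shD_{\CC}$, since the two distributions have quite different origins: $\shI_{\CC}$ comes from the infinitesimal torus action, while $\shD_{\CC}$ is the kernel of the moment map differential. For $\shI_{\RR}$, I would argue that it is spanned by the fundamental vector fields $\xi^{\#}$ for $\xi \in \fot$. Since $T^{n}$ is abelian, the bracket $[\xi^{\#}, \eta^{\#}] = [\xi,\eta]^{\#} = 0$, so these generators commute and involutivity is immediate once I know they furnish a frame. On $\check{M}$, where every stabilizer is trivial, the map $d\rho_{p}(e): \fot \to T_{p}M$ is injective, so the images $\xi^{\#}_{p}$ for a basis $\xi_{1},\dots,\xi_{n}$ of $\fot$ are pointwise linearly independent and vary smoothly in $p$; this exhibits $\shI_{\RR}|_{\check{M}}$ as a smooth rank-$n$ subbundle, hence $\shI_{\CC}|_{\check{M}}$ is a smooth complex subbundle of $T\check{M}\otimes\CC$. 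Smooth sections of $\shI_{\CC}$ are $C^{\infty}$-combinations of the commuting generators $\xi_{i}^{\#}$, and the bracket of two such combinations lies again in $\shI_{\CC}$ by the Leibniz rule together with $[\xi_{i}^{\#},\xi_{j}^{\#}]=0$, giving involutivity as a distribution in the sense of Definition \ref{def4-5}.

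For $\shD_{\RR} = \ker d\mu$, the key observation is the identity $\omega(-,\xi^{\#}) = d\mu^{\xi}$ from the moment map condition, which says $v \in \ker d\mu$ if and only if $\omega(v,\xi^{\#})=0$ for all $\xi \in \fot$; in other words $\shD_{\RR} = \shI_{\RR}^{\omega}$, the symplectic orthogonal of $\shI_{\RR}$. I would use this to get smoothness on $\check{M}$: on $\check{M}$ the distribution $\shI_{\RR}$ is a smooth rank-$n$ subbundle, and the symplectic orthogonal of a smooth subbundle is again smooth (of complementary rank $2m-n$), so $\shD_{\RR}|_{\check{M}}$, and hence $\shD_{\CC}|_{\check{M}}$, is a smooth subbundle. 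For involutivity, the cleanest route is to observe that $\ker d\mu$ is closed under the Lie bracket because the components $\mu^{\xi}$ are constant along $\shD_{\RR}$: if $u,v$ are sections with $d\mu^{\xi}(u)=d\mu^{\xi}(v)=0$ for all $\xi$, then for each $\xi$ one computes $d\mu^{\xi}([u,v]) = u(d\mu^{\xi}(v)) - v(d\mu^{\xi}(u)) - (d(d\mu^{\xi}))(u,v)$, and the first two terms vanish while the last vanishes since $d\mu^{\xi}$ is closed. Thus $[u,v] \in \Gamma(M,\shD_{\RR})$, and complexifying gives involutivity of $\shD_{\CC}$.

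The step I expect to require the most care is the smoothness claim on $\check{M}$, specifically confirming that the symplectic orthogonal construction $\shD_{\RR}=\shI_{\RR}^{\omega}$ genuinely produces a smooth subbundle there. This rests on $\shI_{\RR}|_{\check{M}}$ being a \emph{constant-rank} smooth subbundle, which in turn is exactly the content of restricting to $\check{M}=\{p : \dim H_{p}=0\}$, where the action is locally free and $d\rho_{p}(e)$ has constant rank $n$. Once constant rank is in hand, smoothness of the symplectic orthogonal follows from the nondegeneracy of $\omega$ by a standard linear-algebra-in-families argument (locally, solve a smoothly varying linear system of full rank), and the involutivity arguments are then purely formal manipulations with the bracket. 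I would therefore organize the write-up so that the local freeness on $\check{M}$ is established first, as it underpins every smoothness assertion in the lemma.
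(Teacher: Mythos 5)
Your treatment of $\shD_{\CC}$ is correct and is essentially the paper's own argument: involutivity follows because the components of $\mu$ are constant along sections of $\ker d\mu$ (the paper phrases this as $u(f\circ\mu)=0$ for all $f\in C^{\infty}(\fot^{*})$, you phrase it via closedness of $d\mu^{\xi}$ and the Cartan formula — the same computation), and this argument is valid at every point of $M$, not only on $\check{M}$. Your smoothness argument via $\shD_{\RR}=\shI_{\RR}^{\omega}$ on $\check{M}$ is also fine and makes explicit what the paper dismisses as ``easy to see''; likewise the smoothness of $\shI_{\CC}|_{\check{M}}$ from local freeness is correct.

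The gap is in the involutivity of $\shI_{\CC}$. By Definition \ref{def4-5}, involutivity is a condition on sections over all of $M$: for $u,v\in\Gamma(M,\shI_{\CC})$ you must show $[u,v]_{p}\in(\shI_{\CC})_{p}$ at every $p\in M$, including points where $\dim H_{p}>0$. Your argument rests on the $\xi_{i}^{\#}$ furnishing a frame, which they do only on $\check{M}$, and on the claim that smooth sections of $\shI_{\CC}$ are $C^{\infty}$-combinations of the $\xi_{i}^{\#}$; at a point with positive-dimensional stabilizer the $\xi_{i}^{\#}$ become linearly dependent (at a fixed point they all vanish), and that claim is unjustified there — it would amount to a nontrivial division statement for smooth vector fields. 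Nor can you finish by density and continuity: the fiber dimension of $\shI_{\CC}$ drops on $M\setminus\check{M}$, so $\shI_{\CC}$ is not a closed subset of $TM\otimes\CC$, and a limit of vectors lying in $(\shI_{\CC})_{q}$ with $q\in\check{M}$, $q\to p$, need not lie in $(\shI_{\CC})_{p}$. The repair is the fact implicit in the paper's one-line proof: $(\shI_{\RR})_{q}=T_{q}(T^{n}\cdot q)$ for every $q$, so sections of $\shI_{\RR}$ restrict to vector fields on each orbit (orbits of the compact group $T^{n}$ are embedded submanifolds), and the bracket of two vector fields tangent to a submanifold along it is again tangent to it; hence $[u,v]_{p}\in T_{p}(T^{n}\cdot p)=(\shI_{\RR})_{p}$ for every $p\in M$, singular or not, and complexifying gives the statement for $\shI_{\CC}$. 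With that replacement your proof is complete.
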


\begin{proof}
Since $\shI_{\RR}$ is the singular distribution given by $T^{n}$-orbits, $\shI_{\RR}$ is involutive on $M$ and smooth on $\check{M}$. Therefore the same is true for its complexification $\shI_{\CC}$.
 It's easy to see that $\shD_{\RR}=\ker d\mu$ is smooth on $\check{M}$.  For any $u_{1}, u_{2} \in \Gamma(M, \shD_{\RR})$ and any smooth function $f \in C^{\infty}(\fot^{*}, \RR)$, we have $u_{j} (f\circ \mu)=d\mu(u_{j}) f = 0, j=1,2.$
 It turns out that
$$d\mu([u_{1},u_{2}]) f = [u_{1},u_{2}] (f\circ \mu) = u_{1}u_{2}(f\circ \mu) - u_{2}u_{1}(f\circ \mu) = 0.$$
 This implies $\shD_{\RR}$, therefore $\shD_{\CC}$  is involutive.\end{proof}
  
 \begin{theorem}\label{thm3-0-5}
Under the assumption $(*_{0})$, we have $\shP_{\mathrm{mix}}$ is a singular polarization and $\mathrm{rk}_{\RR}(\shP|_{\check{M}})=n$.
\end{theorem}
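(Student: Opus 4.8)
The plan is to work on the open dense set $\check{M}$, where $\shI_\RR$, $\shD_\RR$ and $\shP_J$ are honest smooth subbundles, verify conditions (a)--(c) of Definition \ref{def4-7} there, and read off the rank; the involutivity in (a) will be checked as an identity of sections over all of $M$. The two structural inputs I would record first are: (i) since $T^{n}$ is abelian its coadjoint action is trivial, so the equivariant $\mu$ is $T^{n}$-invariant and $d\mu(\xi^{\#})=0$, giving $\shI_\RR\subset\shD_\RR$; and (ii) the defining identity $\omega(-,\xi^{\#})=d\mu^{\xi}$ shows $v\in\ker d\mu$ iff $\omega(v,\xi^{\#})=0$ for all $\xi$, i.e.\ $\shD_\RR=(\shI_\RR)^{\omega}$. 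In particular $\shI_\RR$ is $\omega$-isotropic and $\shD_\RR$ is coisotropic; passing to complex-bilinear extensions, $\shD_\CC=(\shI_\CC)^{\omega_{\CC}}$ with $\shI_\CC$ isotropic. On $\check{M}$ the fundamental fields are independent, so $\dim_\RR\shI_\RR=n$ and $\dim_\RR\shD_\RR=2m-n$.

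The heart of the argument is a pointwise orthogonal decomposition adapted to $J$. I would set $V=\shI_\RR$, $W=J\shI_\RR$ and $U=(V\oplus W)^{\perp_{g}}$ with respect to the K\"ahler metric $g=\omega(-,J-)$. Using $g(\xi^{\#},J\eta^{\#})=-\omega(\xi^{\#},\eta^{\#})=0$ one gets $V\perp_{g}W$, so $V\oplus W$ is a $2n$-dimensional $J$-invariant subspace and $U$ is its $J$-invariant orthogonal complement, of real dimension $2m-2n$. A short computation with $\omega(u,\xi^{\#})=-g(u,J\xi^{\#})$ gives $U\subset\shD_\RR$, whence $\shD_\RR=U\oplus V$ by dimension count. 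Complexifying and using that $T^{0,1}$ respects the $J$-invariant splitting $TM=U\oplus(V\oplus W)$, I obtain $T^{0,1}=U^{0,1}\oplus(V\oplus W)^{0,1}$; intersecting with $\shD_\CC=U_\CC\oplus V_\CC$ and noting $(V\oplus W)^{0,1}\cap V_\CC=\shP_J\cap\shI_\CC=0$ (a real vector $a+ib$ with $a,b\in V$ lying in $T^{0,1}$ forces $b=Ja\in V\cap W=0$), I conclude $\shP_J\cap\shD_\CC=U^{0,1}$ has complex dimension $m-n$ and that $\shP_{\mathrm{mix}}=U^{0,1}\oplus\shI_\CC$ is a genuine direct sum of complex dimension $m$.

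With the decomposition in hand the three conditions are quick. For (b), isotropy of $\shP_{\mathrm{mix}}$ holds because $\shP_J$ is isotropic (hence so is $U^{0,1}\subset\shP_J$), $\shI_\CC$ is isotropic by (i)--(ii), and the cross terms $\omega(u,\xi^{\#})$ with $u\in\shP_J\cap\shD_\CC\subset\shD_\CC=(\shI_\CC)^{\omega_{\CC}}$ and $\xi^{\#}\in\shI_\CC$ vanish; together with $\dim_\CC\shP_{\mathrm{mix}}=m$ this gives Lagrangian. For (a) I would check brackets on generators: $[\xi^{\#},\eta^{\#}]=0$ since $T^{n}$ is abelian, $[u,v]\in\Gamma(\shP_J\cap\shD_\CC)$ since $\shP_J$ and $\shD_\CC$ are each involutive (integrability of $J$ and Lemma \ref{lem3-0-1}) and the intersection of involutive distributions is involutive, and $[\xi^{\#},u]=\mathcal{L}_{\xi^{\#}}u\in\Gamma(\shP_J\cap\shD_\CC)$ because the $T^{n}$-flow preserves $J$ and $\mu$ and hence $\shP_J\cap\shD_\CC$; the Leibniz rule $[u,fv]=f[u,v]+(uf)v$ extends this from generators to all sections, over all of $M$. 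For (c) and the rank claim, $\overline{\shP_{\mathrm{mix}}}=U^{1,0}\oplus\shI_\CC$, and since $U^{0,1}\cap U^{1,0}=0$ and $U_\CC\cap V_\CC=0$ one gets $\shP_{\mathrm{mix}}\cap\overline{\shP_{\mathrm{mix}}}=\shI_\CC$; intersecting with $TM$ yields $\shI_\RR$, so $\mathrm{rk}_\RR(\shP_{\mathrm{mix}})=\dim_\RR\shI_\RR=n$, constant on $\check{M}$.

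The main obstacle I anticipate is the pointwise linear algebra of the second paragraph: producing the $g$-orthogonal, $J$-compatible splitting $TM|_{\check{M}}=U\oplus V\oplus W$ and thereby pinning down $\dim_\CC(\shP_J\cap\shD_\CC)=m-n$ together with the directness of the sum. Everything else is formal once $\shD_\RR=(\shI_\RR)^{\omega}$ and $\shP_J\cap\shI_\CC=0$ are in place. A secondary point to handle carefully is that involutivity is required as an identity of sections over all of $M$ rather than only on $\check{M}$; the bracket-on-generators computation settles this directly, since each of the three bracket types lands in $\shP_{\mathrm{mix}}$ pointwise everywhere.
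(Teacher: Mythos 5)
Your proof is correct, and it reaches the key quantitative facts --- $\dim_{\CC}(\shP_{J}\cap\shD_{\CC})_{p}=m-n$ and $\mathrm{rk}_{\RR}=n$ --- by a genuinely different route from the paper. The paper obtains the dimension count by citation: setting $F_{p}=(\shP_{J}\cap\shD_{\CC})_{p}$, it invokes \cite[Corollary 2.4]{GS1} to get $(F_{p})^{\perp}=(\shP_{J})_{p}\oplus(\shI_{\CC})_{p}$, hence $\dim F_{p}=2m-(m+n)=m-n$, and then deduces the Lagrangian property and the rank. You instead construct the $g$-orthogonal, $J$-invariant splitting $T\check{M}=U\oplus V\oplus W$ with $V=\shI_{\RR}$, $W=J\shI_{\RR}$, and identify $\shP_{J}\cap\shD_{\CC}=U^{0,1}$, so that $\shP_{\mathrm{mix}}=U^{0,1}\oplus\shI_{\CC}$ and $\overline{\shP}_{\mathrm{mix}}=U^{1,0}\oplus\shI_{\CC}$. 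This is more self-contained (no appeal to Guillemin--Sternberg) and it buys a sharper output: the explicit identity $\shP_{\mathrm{mix}}\cap\overline{\shP}_{\mathrm{mix}}=\shI_{\CC}$, which makes condition (c) and the value $\mathrm{rk}_{\RR}=n$ immediate, whereas the paper passes directly to $\dim(\shP_{\mathrm{mix}}\cap\overline{\shP}_{\mathrm{mix}}\cap TM)_{p}=\dim(\Im d\rho)=n$ without spelling out that the intersection is exactly $\shI_{\CC}$. Your involutivity argument is essentially the paper's (case-by-case, using that the torus flow preserves $J$ and $\mu$), and your generators-plus-Leibniz formulation is in fact slightly more careful: the paper applies $\shL_{v}J=0$ to an arbitrary section $v\in\Gamma(M,\shI_{\CC})$, although that identity is only literally valid for the fundamental fields themselves. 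One caveat you share with the paper: reducing to generators presumes that, near points of $M\setminus\check{M}$, a smooth section of $\shI_{\CC}$ is a smooth combination of the $\xi_{i}^{\#}$; since the paper's own proof glosses over the same point, this is not a defect of your argument relative to theirs.
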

 
\begin{proof}
$\shP_{J} \cap \shD_{\CC}$ and $\shI_{\CC}$ are singular distributions on $M$, so $\shP_{\mathrm{mix}} = (\shP_{J} \cap \shD_{\CC}) \oplus \shI_{\CC}$ is a singular distribution on $M$.  
We first show that $\shP_{\mathrm{mix}}$ is involutive case by case. 
\begin{enumerate}
\item If both $u$ and $v \in \Gamma(M, \shP_{J} \cap \shD_{\CC})$, then $[u,v] \in \Gamma(M,\shP_{J} \cap \shD_{\CC})$ follows from the fact that both $\shP_{J}$ and $\shD_{\CC}$ are involutive by Lemma \ref{lem3-0-1} and the integrability of $J$.
\item If both $u$ and $v \in \Gamma(M, \shI_{\CC})$, then $[u,v] \in  \Gamma(M,\shI_{\CC})$ by Lemma \ref{lem3-0-1}.
\item If $u \in \Gamma(M, \shP_{J} \cap \shD_{\CC})$ and $v \in \Gamma(M, \shI_{\CC})$, then $[u,v] \in \shP_{\mathrm{mix}}$. The reason is as follows.
\begin{enumerate}[label = (\roman*)]
\item $[u,v] \in  \Gamma(X,\shP_{J})$ because $\shL_{v}J=0$.
\item $[u,v] \in \Gamma(X, \shD_{\CC})$ since $\shI_{\CC} \subset \shD_{\CC}$ and $\shD_{\CC}^{k}$ is involutive by Lemma \ref{lem3-0-1}.
\item $\shP_{J} \cap \shD_{\CC} \subset \shP_{\mathrm{mix}}$ implies $\Gamma(M, \shP_{J} \cap \shD_{\CC}) \subset \Gamma(M,\shP_{\mathrm{mix}})$.
\end{enumerate}
\end{enumerate} 
By Lemma \ref{lem3-0-1} and the integrability of $J$, $\shP_{\mathrm{mix}}$ is smooth on $\check{M}$. 
Then we prove that $(\shP_{\mathrm{mix}})_{p}$ is a Lagrangian subspace of $T_{p}M \otimes \CC$ for any $p \in \check{M}$. Note that $\shP_{J}$ is holomorphic Lagrangian, one has $\omega(u, v) = 0$, for any $u,v \in \shP_{J}$.
 Since the $T^{n}$-action preserves $\omega$, we have $\omega(u,v) =0$, for $u, v \in \shI_{\CC}$. Recall that $(\shI_{\RR})_{p}$ is a isotropic subspace of $T_{p}M$ orthorgnal to $(\shD_{\RR})_{p}$ with respect to $\omega$, for any point $p \in M$. It follows that $\omega(u,v) = 0$, for $u \in \shP_{J} \cap \shD_{\CC}$, $v \in \shI_{\CC}$. Hence we conclude that $\omega|_{\shP_{\mathrm{mix}}}=0$. It remains to show $\dim (\shP_{\mathrm{mix}})_{p}=m$, for any $p \in \check{M}$. Let $F_{p}=(\shP_{J} \cap \shD_{\CC})_{p}$. By \cite[corollary 2.4]{GS1}, $(F_{p})^{\bot}=(\shP_{J})_{p}+ \shI_{\CC}$ and the sum is direct due to $\shP_{J} \cap \shI_{\CC}=0$. This gives rise to $\dim(F_{p})^{\bot}=(\dim M)/2+n$. We therefore have:
 $$\dim (\shP_{J} \cap \shD_{\CC})_{p}= \dim F_{p}= \dim M-((\dim M)/2+n)= m-n.$$ Adding to the condition $\dim (H_{p})=0$, for any $p \in \check{M}$. We obtain $\dim(\shI_{\CC})_{p}=n$ and $\dim (\shP_{\mathrm{mix}})_{p}=m$.
   Finally, note that $T^{n}$-action on $\check{M}$ is (locally) free, one therefore has $\dim(\shP_{\mathrm{mix}} \cap \bar{\shP}_{\mathrm{mix}}  \cap TM)_{p}= \dim (\Im d\rho)=n$, for any $p \in \check{M}$.
\end{proof}

\begin{remark}
Even though $\shP_{\mathrm{mix}}$ is a singular polarization, $\dim(\shP_{\mathrm{mix}})_{p}$ is always $m$, for any $p \in M$. $\shP_{mix}$ is a singular real polarization, when $n=m$; $\shP_{mix}$ is a singular mixed polarization, when $0< n < m$.
\end{remark}

 \subsection{Construction of a family of complex structures on local models}
 
 In this subsection, we first use the Lie series introduced by Gr\"obner \cite{Gr}, which was used by Mour\~{a}o and Nunes in \cite{MN}, to construct a family of complex structures $J_{t}$ on local models $(T^{n-k}_{\CC} \times \CC^{r})/F$, where $F$ is a finite subgroup of $T^{n}$. Such local models were constructed in Sjamaar's theorem on the existence of holomorphic slices (see Theorem \ref{thm2-2}). According to \cite[Theorem 1.12]{Sj}, for any $p \in M$, we can build a $T^{n}$-equivariant biholomorphic map from a $T^{n}$-invariant neighbourhood $U_{p} \subset T^{n}_{\CC} \times_{H_{p}^{\CC}}\CC^{r}$ around $e=[(1,0)] \in T^{n}_{\CC} \times_{H_{p}^{\CC}}\CC^{r}$ to a $T^{n}$-invariant neighbourhood of $p$ as follows. Since $T^{n}$ is abelian, the $T^{n}$-orbit through $p$ is isotropic and $\mu(p)$ is fixed under the co-adjoint action of $T^{n}$. After shifting the moment map we assume that $\mu(p)=0$. Let $H_{p}$ be the stabilizer of $p$ with respect to the $T^{n}$-action. Then by \cite[Proposition 1.6]{Sj} the stabilizer with respect to the $T^{n}_{\CC}$-action is the complexification $H_{p}^{\CC}$ of $H_{p}$, which has the form of $H_{p}^{\CC}= T_{\CC}^{k} \times F$ with $F$ being a finite subgroup of $T^{n}_{\CC}$. We identify the tangent space $T_{p}M$ at $p$ with $\CC^{m}$. The tangent action of $H_{p}^{\CC}$ defines a linear representation $H_{p}^{\CC} \rightarrow GL(m,\CC)$, the restriction of which to $H_{p}$ is a unitary representation $H_{p} \rightarrow U(m)$. Note that the tangent space to the complex orbit $T^{n}_{\CC}p$ at point $p$ is a complex subspace of $T_{p}M \cong \CC^{m}$. Denote its orthogonal complement by $V$, then $V$ is an $H_{p}^{\CC}$-invariant subspace, which can be identified with $\CC^{r}$ for $r=m-n+k$. According to the proof of \cite[Theorem 1.12]{Sj},
there exists a $T^{n}$-invariant open neighbourhood $$U_{p}=T^{n} exp(\sqrt{-1}D)B \subset T^{n}_{\CC} \times_{H_{p}^{\CC}}\CC^{r}$$ around $e$ and $T^{n}$-equivariant biholomorphic map:
$\phi_{1}: U_{p} \rightarrow M$ such that $\phi_{1}$ is a biholomorphic map onto an open neighbourhood of $T^{n}p$ in $M$ and $\phi_{1}(e)=p$, where $D$ and $B$ are small balls centered at the origins in $T_{p}(T^{n}p)$ and $\CC^{r}$ respectively. 

 Fixing a splitting $T^{n}_{\CC}=T^{n-k}_{\CC} \times T_{\CC}^{k}$, we have a $T^{n}_{\CC}$-equivariant biholomorphism
$$ \phi_{2}: (T_{\CC}^{n-k}\times \CC^{r})/F \rightarrow T^{n}_{\CC}\times_{H_{p}^{\CC}} \CC^{r}, (t,v)\mapsto[(t,1),v],$$
where $T^{n-k}_{\CC}$ acts on $T^{n-k}_{\CC}$ by right multiplication, $T_{\CC}^{k}$ acts on $\CC^{r}$ defining a representation $T_{\CC}^{k} \rightarrow \GL(r,\CC)$ determined by $H_{p} \rightarrow \GL(r,\CC)$, and $F$ is a finite subgroup of $T^{n}_{\CC}$. 
Then $\phi_{1} \circ \phi_{2}: (T_{\CC}^{n-k}\times \CC^{r})/F \rightarrow T^{n}_{\CC}\times_{H_{p}^{\CC}} \CC^{r} \rightarrow M$ induce an $T^{n}$-equivariant isomorphism around a small neighbourhood of $p$ in $M$ with $\phi_{1}\phi_{2}( (1,0))=\phi_{1}(e) =p$.
 
  We first deal with the most essential case, $T^{n-k}_{\CC} \times \CC^{r}$, namely $F$ is trivial.

In our local model $T_{\CC}^{n-k}\times\CC^{r}$ with a Hamiltonian $T^{n}$-action by holomorphic isometries, its complex structure $J$ is standard but its compatible K\"ahler form $\omega$ is not necessary standard. The action by $T^{n}=T^{n-k} \times T^{k}$ is given by the standard multiplication of $T^{n-k}$ on $T_{\CC}^{n-k}$ and a unitary representation of $T^{k}$ on $\CC^{r}$.
  
  More precisely, fix a basis $\{\xi_{1}, \cdots, \xi_{n}\}$ of $\fot_{\ZZ}$ and dual basis $\{\xi_{1}^{*}, \cdots, \xi_{n}^{*}\}$ of $\fot^{*}_{\ZZ}$. 
 Let $$\mu=(\mu_{1},\cdots, \mu_{n}):T^{n-k}_{\CC} \times \CC^{r} \rightarrow \fot^{*} $$
 be the moment map. Let $(w_{1},\cdots,w_{n-k}, z_{1}, \cdots,z_{r})$ be the  standard holomorphic coordinates of $T^{n-k}_{\CC} \times \CC^{r}$. As $T^{n}$ is Abelian, the action of $T^{k}$ on $\CC^{r}$ is diagonalizable:  
  $e^{i\xi_{j}}(z_{1}, \cdots,z_{r})=(e^{i\xi_{j} b_{j1}}z_{1},\cdots, e^{i\xi_{j}b_{jr}}z_{r})$. Let $\varphi: \fot^{*} \rightarrow \RR$ be a strictly convex function and denote the  Hamiltonian vector field associated to $\varphi \circ \mu$ by $X_{\varphi}$. 
  
   We use the Lie series to construct a one-parameter family of complex structures $J_{t}$ around some neighbourhood $V$ of $p=(1,\cdots, 1, 0,\cdots,0)$ as follows. In Theorem \ref{thm3-1-0}, we first confirm that $e^{-itX_{\varphi}}$ can be applied to $w_{j}$ and $z_{l}$ completely (see Definition \ref{def3-2-1}) on $T^{n-k}_{\CC} \times \CC^{r}$, for $ j=1,\cdots,n-k, l=1, \cdots, r$. Moreover we give an explicit formula for $w_{j}^{t}:=e^{-itX_{\varphi}}w_{j}$ and $z^{t}_{l}:=e^{-itX_{\varphi}}z_{l}$. Then in Theorem \ref{thm3-1}, we show that $(dw^{t}\wedge d\bar{w}^{t} \wedge dz^{t}\wedge d\bar{z}^{t})$ is no-where vanishing in a neighbourhood of $p$,$\text{with}~ dw^{t}=\prod_{i=1}^{k}dw^{t}_{i}, dz^{t}=\prod_{i=1}^{r}dz^{t}_{i}$. That is $(TV) \otimes \CC=\shP_{t}\oplus \bar{\shP}_{t}$, where $(\shP_{t})_{p}$ is the linear subspace of $(T_{p}V)\otimes \CC$ spanned by $\frac{\partial}{\partial w_{i}^{t}}$'s and $\frac{\partial}{\partial z_{j}^{t}}$'s, which gives complex structures $J_{t}$ on $V$ with holomorphic coordinates $\{w_{1}^{t}, \cdots,  w_{n-k}^{t}, z_{1}^{t}, \cdots, z_{r}^{t}\}$.

We first introduce the following definition.
\begin{definition}\label{def3-2-1} Let $X$ be a smooth real vector field on $M$. Given a smooth function $f \in C^{\infty}(M)$ and a complex number $\tau \in \mathbb{C}$, we say that $e^{\tau X}$ {\em can be applied to $f$ completely}, if the Lie series $e^{\tau X} f := \sum_{k=0}^{\infty} \frac{(\tau)^{k}}{k!}X^{k}(f) $
is absolutely and uniformly convergent on compact subsets in $M \times \CC$.
\end{definition}
    
 \begin{theorem}\label{thm3-1-0}
Given $T^{n}$ acts on $T^{n-k}_{\CC} \times \CC^{r}$ as described above. Then
$e^{-itX_{\varphi}}$ can be applied to $w_{j}$ and $z_{l}$ completely, for $ j=1,\cdots,n-k, l=1, \cdots, r$. Moreover,
\begin{enumerate}
\item $w_{j}^{t}:=e^{-itX_{\varphi}}w_{j}=w_{j}e^{t\frac{\partial \varphi}{\partial \mu_{j}}}$;
\item $ z_{l}^{t}:=e^{-itX_{\varphi}}z_{l}=z_{l}e^{t(\sum_{j=n-k+1}^{n}\frac{\partial \varphi}{\partial \mu_{j}} b_{jl})}$.
\end{enumerate}
 \end{theorem}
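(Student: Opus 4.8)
The plan is to show that the Hamiltonian vector field $X_{\varphi}$ is, at every point, a linear combination of the fundamental vector fields of the $T^{n}$-action whose coefficients are constant along the orbits; this is what collapses the Lie series of Definition \ref{def3-2-1} into an honest exponential. First I would compute $X_{\varphi}$ explicitly. Writing $\mu=(\mu_{1},\dots,\mu_{n})$ in the chosen dual basis, the chain rule gives $d(\varphi\circ\mu)=\sum_{j}\tfrac{\partial\varphi}{\partial\mu_{j}}\,d\mu_{j}$, while the moment map relation $\omega(-,\xi^{\#})=d\mu^{\xi}$ yields $\imath_{\xi_{j}^{\#}}\omega=-d\mu_{j}$. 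Since $\imath_{X_{\varphi}}\omega=-d(\varphi\circ\mu)$ and $\omega$ is nondegenerate, I obtain
$$X_{\varphi}=\sum_{j=1}^{n}g_{j}\,\xi_{j}^{\#},\qquad g_{j}:=\tfrac{\partial\varphi}{\partial\mu_{j}}\circ\mu.$$
Because $T^{n}$ is abelian its coadjoint action is trivial, so $\mu$ and hence each $g_{j}$ is $T^{n}$-invariant; in particular $\xi_{i}^{\#}(g_{j})=0$ for all $i,j$, whence $X_{\varphi}(g_{j})=0$.

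Second, I would record how the fundamental fields act on the coordinates. From the prescribed action---$T^{n-k}$ acting by standard multiplication on $T^{n-k}_{\CC}$ and $T^{k}$ acting diagonally with weights $b_{jl}$ on $\CC^{r}$---one reads off $\xi_{j}^{\#}(w_{l})=i\,\delta_{jl}\,w_{l}$ for $j,l\le n-k$ and $\xi_{j}^{\#}(z_{l})=i\,b_{jl}\,z_{l}$ for $n-k<j\le n$, with all other pairings zero. Feeding this into the formula for $X_{\varphi}$ gives
$$X_{\varphi}(w_{l})=i\,g_{l}\,w_{l},\qquad X_{\varphi}(z_{l})=i\,h_{l}\,z_{l},\qquad h_{l}:=\sum_{j=n-k+1}^{n}b_{jl}\,g_{j},$$
and $h_{l}$ is again $T^{n}$-invariant, so $X_{\varphi}(h_{l})=0$ as well.

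Third, I would iterate. Using $X_{\varphi}(g_{l})=X_{\varphi}(h_{l})=0$, a one-line induction shows $X_{\varphi}^{k}(w_{l})=(i g_{l})^{k}w_{l}$ and $X_{\varphi}^{k}(z_{l})=(i h_{l})^{k}z_{l}$. Summing the Lie series and using $(-i)(i)=1$,
$$e^{-itX_{\varphi}}w_{l}=\sum_{k\ge 0}\frac{(-it)^{k}}{k!}(i g_{l})^{k}w_{l}=w_{l}\,e^{t g_{l}},\qquad e^{-itX_{\varphi}}z_{l}=z_{l}\,e^{t h_{l}},$$
which are exactly formulas (1) and (2). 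Convergence is then immediate from the explicit terms: the absolute series is $\sum_{k}\tfrac{|t|^{k}|g_{l}|^{k}}{k!}|w_{l}|=|w_{l}|\,e^{|t|\,|g_{l}|}$, and on any compact set $K\times[-T,T]\subset M\times\RR$ the continuous function $g_{l}$ (a smooth function of $\mu$, with $\mu(K)$ compact and $\varphi$ smooth) and the coordinate $w_{l}$ are bounded, giving absolute and uniform convergence; the argument for $z_{l}$ is identical. Hence $e^{-itX_{\varphi}}$ can be applied to each $w_{j}$ and $z_{l}$ completely.

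The only real content is the first step: recognizing that $X_{\varphi}$ lies in the span of the fundamental vector fields with orbit-constant coefficients. Once this is established, the $T^{n}$-invariance that forces $X_{\varphi}(g_{l})=0$ is exactly what turns the Lie series into a true exponential and simultaneously delivers both the closed formulas and their convergence, so that no further analytic estimates are needed.
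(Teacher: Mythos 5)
Your proposal is correct and follows essentially the same route as the paper: writing $X_{\varphi}=\sum_{j}\bigl(\tfrac{\partial\varphi}{\partial\mu_{j}}\circ\mu\bigr)\,\xi_{j}^{\#}$, using the $T^{n}$-invariance of the coefficients (so $X_{\varphi}$ annihilates them) to collapse the Lie series into an honest exponential, and deducing absolute and uniform convergence on compact sets from the resulting explicit terms. The only cosmetic difference is that you read off $\xi_{j}^{\#}(z_{l})=i\,b_{jl}z_{l}$ directly, which is valid at $z_{l}=0$ as well, whereas the paper computes in polar coordinates on $\{z_{l}\neq 0\}$ and then verifies the locus $z_{l}=0$ separately.
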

 
  \begin{proof}
  Note that $ \omega(-, \xi_{j}^{\#}) = d\mu_{j}$, where $\xi_{j}^{\#}$ is the fundamental vector field associated to $\xi_{j} \in \fot$. By direct calculations,
 $$\omega \left (-, X_{\varphi} \right )=d(\varphi \circ \mu)=\sum_{j=1}^{n} \frac{\partial \varphi}{\partial \mu_{j}} d\mu_{j}= \sum_{j=1}^{n} \frac{\partial \varphi}{\partial \mu_{j}} \omega(-,  \xi_{j}^{\#})=\omega(-, \sum_{j=1}^{n} \frac{\partial \varphi}{\partial \mu_{j}} \xi_{j}^{\#}).$$

That is $X_{\varphi}=\sum_{j=1}^{n} \frac{\partial \varphi}{\partial \mu_{j}}\xi_{j}^{\#}$.
 \begin{enumerate}
 \item Since $w_{j}$ is nowhere vanishing, $log|w_{j}|$ is well defined. Set $w_{j}=e^{log|w_{j}|+i\theta_{j}}$.
 According to what we assume, $\xi_{j}^{\#}(\theta_{j})=1$, and $\xi_{\gamma}^{\#}(\theta_{j})=0$ for $\gamma=k+1, \cdots,n$. This gives:
 $$-iX_{\varphi} (w_{j})=-iX_{\varphi} (e^{\log|w_{j}|+i\theta_{j}}) =e^{\log|w_{j}|+i\theta_{j}} \frac{\partial \varphi}{\partial \mu_{j}}=w_{j} \frac{\partial \varphi}{\partial \mu_{j}}.$$
 Since $\varphi \circ \mu$ is a $T^{n}$-invariant function, it can be seen that $X_{\varphi} (\frac{\partial \varphi}{\partial \mu_{j}})$=0, for all $j$ and 
$$(-i)^{k}X_{\varphi}^{k}(w_{j})=w_{j}\left(\frac{\partial \varphi}{\partial \mu_{j}}\right)^{k}.$$
 This implies:
\begin{align*} 
e^{-itX_{\varphi}}(w_{j}) &=e^{-itX_{\varphi}}(w_{j})\\
&=\sum_{k} \frac{1}{k!}(-itX_{\varphi})^{k}(w_{j})\\
&=w_{j}\sum_{k} \frac{1}{k!}\left(t\frac{\partial \varphi}{\partial \mu_{j}}\right)^{k}.
\end{align*}
Therefore $e^{-itX_{\varphi}}$ can be applied to $w_{j}$ completely and 
 $$w_{j}^{t}:=e^{-itX_{\varphi}}w_{j}=w_{j}e^{t\frac{\partial \varphi}{\partial \mu_{j}}}.$$
\item Fix $l=1,\cdots,r$, on $z_{l} \ne 0$, let $z_{l} = e^{log|z_{l}|+i \vartheta_{l}}$, for . For our action of $T^{n-k}$ acts trivially on $\CC^{r}$, $\xi_{j}^{\#} \vartheta_{l}=0$ for $j=1,\cdots, n-k$ and $\xi^{\#}_{\gamma}=\sum_{l=1}^{r} b_{\gamma l}\frac{\partial}{\partial \vartheta_{l}}$ for $\gamma=n-k+1, \cdots, n$ with $b_{\gamma l}\in \ZZ$. This implies $$X_{\varphi}(\vartheta_{l})= \sum_{\gamma=n-k+1}^{n}\frac{\partial \varphi}{\partial \mu_{\gamma}} b_{\gamma l},
$$ and 
\begin{align*}
&-iX_{\varphi} z_{l}=-iX_{\varphi} e^{\log|z_{l}|+i\vartheta_{l}}=z_{l} \sum_{\gamma=n-k+1}^{n}\frac{\partial \varphi}{\partial \mu_{\gamma}} b_{\gamma l}.
\end{align*} 
Recall that $\varphi \circ \mu$ and $\mu$ are $T^{n}$-invariant functions. It follows $$X_{\varphi} \left(\sum_{\gamma=n-k+1}^{n}\frac{\partial \varphi}{\partial \mu_{\gamma}} b_{\gamma l}\right)=0,$$ and
$$(-i)^{q}X_{\varphi}^{q}(z_{l})=z_{l}\left(\sum_{\gamma=n-k+1}^{n}\frac{\partial \varphi}{\partial \mu_{\gamma}} b_{\gamma l}\right)^{q}.$$
By the same argument as in $(1)$, we obtain:
\begin{align*}
e^{-itX_{\varphi}} (z_{l})&=e^{-itX_{\varphi}} e^{log|z_{l}|+i \vartheta_{l}}\\
&=z_{l}\sum_{q} \frac{1}{q!}\left(\sum_{\gamma=n-k+1}^{n}t\frac{\partial \varphi}{\partial \mu_{\gamma}} b_{\gamma l} \right)^{q}.
\end{align*}

It can be checked directly that this equality continues to hold true when $z_{l}=0$.
This implies
$e^{-itX_{\varphi}} (z_{l})= \sum_{q}  \frac{1}{q!}(itX_{\varphi})^{q}z_{l}$
is absolutely and uniformly convergent on compact subsets $T^{n-k}_{\CC}\times \CC^{r} \times \RR$. Therefore $e^{-itX_{\varphi}}$ can be applied to $z_{l}$ completely for $l=1,\cdots,r$, and
$$z_{l}^{t}:=e^{-itX_{\varphi}} (z_{l})=z_{l}\sum_{q} \frac{1}{q!}\left(\sum_{\gamma=n-k+1}^{n} t\frac{\partial \varphi}{\partial \mu_{\gamma}} b_{\gamma l}\right)^{q}=z_{l}e^{t(\sum_{\gamma=n-k+1}^{n}\frac{\partial \varphi}{\partial \mu_{\gamma}} b_{\gamma l})}.$$
\end{enumerate}
 \end{proof}
  
  \begin{remark}
   In the analytic setting, Mour\~{a}o and Nunes in \cite[Theorem 2.5]{MN} showed the short time existence of $J_{t}$ for small $t \in \CC$.  \end{remark}
  
 \begin{theorem}\label{thm3-1}
Let $(w_{1},\cdots,w_{n-k}, z_{1}, \cdots,z_{r})$ be the $J$-holomorphic coordinates of $T^{n-k}_{\CC} \times \CC^{r}$. Then for any $t \ge0$, the functions $w_{j}^{t}$'s , $z_{l}^{t}$'s defined in Theorem \ref{thm3-1-0} form a system of complex coordinates on some open neighbourhood $V^{t}$ around $p$, defining a new complex structure $J_{t}$ for which the coordinates $\{w_{1}^{t}, \cdots, w_{n-k}^{t}, z_{1}^{t}, \cdots, z_{r}^{t}\}$ are holomorphic.
\end{theorem}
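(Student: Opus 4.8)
The statement is equivalent to the assertion that the smooth map $\Phi_t\colon (w,z)\mapsto (w^t,z^t)$ is a local diffeomorphism near $p$: once the complex-valued functions $w_j^t,z_l^t$ restrict to a genuine coordinate chart, the complex structure $J_t$ for which they are holomorphic is simply the pullback of the standard structure on $\CC^{n-k+r}$, hence automatically integrable. The local diffeomorphism condition is in turn equivalent to the nonvanishing of $dw^t\wedge d\bar{w}^t\wedge dz^t\wedge d\bar{z}^t$. So the entire theorem reduces to a single Jacobian nondegeneracy, which I will verify for all $t\ge 0$ at once.

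First I would substitute the explicit formulas $w_j^t=w_je^{ta_j}$ and $z_l^t=z_le^{tc_l}$ from Theorem \ref{thm3-1-0}, where $a_j=\partial\varphi/\partial\mu_j$ and $c_l=\sum_{\gamma}b_{\gamma l}a_\gamma$ are real, $T^n$-invariant functions. Differentiating gives $dw_j^t=e^{ta_j}(dw_j+tw_j\,da_j)$ and $dz_l^t=e^{tc_l}(dz_l+tz_l\,dc_l)$, and the scalar factors $e^{ta_j},e^{tc_l}$ are nowhere zero and may be discarded. On the dense locus where all $z_l\ne 0$ I would pass to modulus-argument coordinates $x_j=\log|w_j|$, $\theta_j=\arg w_j$, $u_l=\log|z_l|$, $\phi_l=\arg z_l$; since $\mu$, hence $a_j$ and $c_l$, are $T^n$-invariant, the forms $da_j,dc_l$ involve only the radial differentials $dx,du$. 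A short computation then shows that, up to a nonzero constant, the top wedge equals $\bigwedge_j\tilde\alpha_j\wedge\bigwedge_l\tilde\beta_l\wedge\bigwedge_j d\theta_j\wedge\bigwedge_l d\phi_l$, where $\tilde\alpha_j=dx_j+t\,da_j$ and $\tilde\beta_l=du_l+t\,dc_l$. Thus the angular directions decouple, and everything reduces to invertibility of the radial Jacobian $C=I+t\,SH_\varphi M$, where $H_\varphi=(\partial^2\varphi/\partial\mu_i\partial\mu_j)$, $M=\partial\mu/\partial y$ is the Jacobian of the moment map in the radial variables $y=(x,u)$, and $S$ is the integer matrix recording the shift $y\mapsto y^t=y+tS\,a(\mu(y))$ read off from the two formulas above.

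The crucial step is to bring $C$ into a manifestly nondegenerate form. Here I would invoke Burns--Guillemin (Theorem \ref{thm2-0}): writing the $T^n$-invariant K\"ahler potential $\rho$ as a function of $y$, their formula identifies each moment-map component with a derivative of $\rho$ along the radial direction dual to the fundamental vector field $\xi_\gamma^{\#}$, which yields precisely $M=S^{\mathsf T}H_\rho$ (up to harmless positive constants), where $H_\rho=(\partial^2\rho/\partial y_p\partial y_q)$. By the K\"ahler condition $\rho$ is strictly plurisubharmonic, so $H_\rho\succ 0$; by assumption $(*)$ the function $\varphi$ is strictly convex, so $H_\varphi\succ 0$. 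Hence $C=I+t\,(SH_\varphi S^{\mathsf T})H_\rho$ with $SH_\varphi S^{\mathsf T}\succeq 0$ and $H_\rho\succ 0$. The product of a positive semidefinite and a positive definite matrix is similar to $H_\rho^{1/2}(SH_\varphi S^{\mathsf T})H_\rho^{1/2}\succeq 0$, so all its eigenvalues $\lambda_i$ are $\ge 0$; therefore $\det C=\prod_i(1+t\lambda_i)\ge 1>0$ for every $t\ge 0$. Note that semidefiniteness of $SH_\varphi S^{\mathsf T}$ is enough, which matters when the $T^k$-action on $\CC^r$ is not locally free. This sign is exactly what upgrades the short-time existence of Mour\~{a}o and Nunes to existence for all $t\ge 0$: for $t<0$ some factor $1+t\lambda_i$ could vanish, but for $t\ge 0$ none does.

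It remains to cover the central locus $z=0$ (which contains $p$), where the logarithmic coordinates $u_l$ degenerate. There I would argue directly from the differentials: at $z=0$ the correction terms $tz_l\,dc_l$ vanish, so $dz_l^t=e^{tc_l}dz_l$ is diagonal and invertible in the $z$-directions, while $da_j$ carries no $dz$-component, since the moment-map components for the $T^k$-action on $\CC^r$ are quadratic in $z$ and have vanishing differential at $z=0$. Hence the Jacobian at such points is block-diagonal, with the $w$-block reducing to the pure-torus case already settled above. Invertibility at $p$ then propagates to an open neighbourhood $V^t$ by continuity of the determinant, producing the chart and the complex structure $J_t$. I expect the main obstacle to be the bookkeeping of the third paragraph, namely matching $S$, $M$, and $H_\rho$ correctly through Burns--Guillemin so that the two independent positivity hypotheses, convexity of $\varphi$ and plurisubharmonicity of $\rho$, combine into the single estimate $\det C>0$ valid for all $t\ge 0$.
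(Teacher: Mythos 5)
Your proposal is correct and follows essentially the same route as the paper's proof: reduce the statement to nonvanishing of the Jacobian $dw^{t}\wedge d\bar w^{t}\wedge dz^{t}\wedge d\bar z^{t}$ at $p$, exploit the block structure there (at $z=0$ the correction to $dz_{l}^{t}$ and the differentials of the $T^{k}$-moment components vanish, so the determinant factors through the $w$-block), identify the moment-map Jacobian in that block with the positive-definite radial Hessian of a Burns--Guillemin invariant potential $\rho$, and combine it with $H_{\varphi}\succ 0$ to get $\det=\prod_{i}(1+t\lambda_{i})>0$ for all $t\ge 0$, exactly as the paper does with its matrix $\begin{pmatrix} I+tH_{\varphi}A & tH_{\varphi}A\\ tH_{\varphi}A & I+tH_{\varphi}A\end{pmatrix}$. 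One caveat: your intermediate claim that $T^{n}$-invariance forces $da_{j},dc_{l}$ to be purely radial on the whole locus $\{z_{l}\neq 0\}$ is not true in general (the phases of $z_{1},\dots,z_{r}$ form an $r$-torus of which only a $k$-dimensional subtorus acts, so $\mu$ may depend on relative phases), but this overreach is harmless since the theorem only requires nondegeneracy at $p$, where your final paragraph's block argument --- which needs only block-triangularity, not block-diagonality --- coincides with the paper's.
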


\begin{proof}
  According to Theorem \ref{thm3-1-0}, the functions $w^{t}_{j}=w_{j}e^{t\frac{\partial \varphi}{\partial \mu_{j}}}$ and $z_{l}^{t}=z_{l}e^{t(\sum_{\gamma=n-k+1}^{n}\frac{\partial \varphi}{\partial \mu_{\gamma}} b_{\gamma l})}$ are smooth. In order to show that $\{w_{1}^{t}, \cdots, w_{n-k}^{t}, z_{1}^{t}, \cdots, z_{r}^{t}\}$ forms a system of complex coordinates on some open set $V$ around $p$, it is enough to show $dw_{1}^{t}, \cdots, dw_{n-k}^{t}$,
  $dz_{1}^{t}, \cdots, dz_{r}^{t}$, $d\bar{w}_{1}^{t}, \cdots, d\bar{w}_{n-k}^{t}, d\bar{z}_{1}^{t}, \cdots, d\bar{z}_{r}^{t}$ are linear independent at $p$. That is $(dw^{t}\wedge d\bar{w}^{t} \wedge dz^{t}\wedge d\bar{z}^{t})(p) \ne 0~~\text{with}~ dw^{t}=\prod_{i=1}^{k}dw^{t}_{i}, dz^{t}=\prod_{i=1}^{r}dz^{t}_{i}$, for any $t \ge0$. This will follow from the convexity of $\varphi$ and the plurisubharmonicity of K\"ahler potential. Set $y_{j}^{t}= log|w_{j}^{t}|$ and $\tilde{w}_{j}^{t}= y^{t}_{j} +i\theta_{j}$, $j=1,\cdots, n-k$. Then we have $w^{t}_{j}=e^{\tilde{w}_{j}^{t}}$, $dw_{j}^{t}=w^{t}_{j}d\tilde{w}_{j}^{t}$, and
  \begin{enumerate}
 \item[$\cdot$]$d\tilde{w}_{j}^{t}= d\tilde{w}_{j}+t d \frac{\partial \varphi}{ \partial \mu_{j}} = d\tilde{w}_{j} +t \left(\frac{\partial^{2} \varphi}{ \partial \mu_{j} \partial\mu_{1}}d\mu_{1} +\cdots +\frac{\partial^{2} \varphi}{ \partial \mu_{j} \partial\mu_{n}}d\mu_{n}\right),j=1,\cdots, n-k$;
\item[$\cdot$] $dz_{l}^{t}
=e^{t(\sum_{\gamma=n-k+1}^{n}\frac{\partial \varphi}{\partial \mu_{\gamma}} b_{\gamma l})}dz_{l}+z_{l}de^{t(\sum_{\gamma=n-k+1}^{n}\frac{\partial \varphi}{\partial \mu_{\gamma}} b_{\gamma l})}$, 
  $l=1,\cdots r$.
\end{enumerate}

 Since $T^{k}$ acts on $\CC^{r}$ defining a unitary representation, we have $dz_{l}^{t}
=e^{t(\sum_{\gamma=n-k+1}^{n}\frac{\partial \varphi}{\partial \mu_{\gamma}} b_{\gamma l})}dz_{l}$ at point $p=(1,0)$. This gives:
\begin{align*}
&\begin{pmatrix}
d\tilde{w}_{1}^{t}\\
\vdots \\
d\tilde{w}_{n-k}^{t}\\
d\bar{\tilde{w}}_{1}^{t}\\
\vdots \\
d\bar{\tilde{w}}_{n-k}^{t}\\
dz_{1}^{t}\\
d\bar{z}_{1}^{t}\\
\vdots \\
dz_{r}^{t}\\
d\bar{z}_{r}^{t}
\end{pmatrix}(p)
=\begin{pmatrix}
I+tH_{\varphi} A&tH_{\varphi} A& tH_{\varphi} B \\
t H_{\varphi} A&I+tH_{\varphi} A&tH_{\varphi} B\\
0&0&K
\end{pmatrix}
\begin{pmatrix}
d\tilde{w}_{1}\\
\vdots \\
d\tilde{w}_{n-k}\\
d\bar{\tilde{w}}_{1}\\
\vdots \\
d\bar{\tilde{w}}_{n-k}\\
dz_{1}\\
d\bar{z}_{1}\\
\vdots \\
dz_{r}\\
d\bar{z}_{r}
\end{pmatrix}(p).
\end{align*}
 where  
 $A=\begin{pmatrix}
 \frac{\partial \mu_{1}}{\partial \tilde{w}_{1} }& \cdots&
\frac{\partial \mu_{1}}{\partial \tilde{w}_{n-k} }\\
\cdots&\cdots&\cdots\\
\frac{\partial \mu_{n-k}}{\partial \tilde{w}_{1} }&\cdots&
\frac{\partial \mu_{n-k}}{\partial \tilde{w}_{n-k} }
 \end{pmatrix}$,
 $H_{\varphi}= \begin{pmatrix}
\frac{\partial ^{2} \varphi}{ \partial \mu_{1} \partial \mu_{1}}& \cdots&
 \frac{\partial ^{2} \varphi}{ \partial \mu_{1} \partial \mu_{n-k}}\\
 \cdots& \cdots& \cdots\\
\frac{\partial ^{2} \varphi}{ \partial \mu_{n-k} \partial \mu_{1}}&\cdots&
\frac{\partial ^{2} \varphi}{ \partial \mu_{n-k} \partial \mu_{n-k}}
 \end{pmatrix}$, and\\
$K= \begin{pmatrix}
 e^{t(\sum_{\gamma=n-k+1}^{n}\frac{\partial \varphi}{\partial \mu_{\gamma}} b_{\gamma 1})} & & \\
    & \ddots & \\
    & & e^{t(\sum_{\gamma=n-k+1}^{n}\frac{\partial \varphi}{\partial \mu_{\gamma}} b_{\gamma r})} \end{pmatrix}$ .
 
Since $(w_{1},\cdots,w_{n-k}, z_{1}, \cdots,z_{r})$ is the $J$-holomorphic coordinates of $T^{n-k}_{\CC} \times \CC^{r}$,
  $(dw\wedge d\bar{w} \wedge dz\wedge d\bar{z})(p) \ne 0.$ In order to show $(dw^{t}\wedge d\bar{w}^{t} \wedge dz^{t}\wedge d\bar{z}^{t})(p) \ne 0$, for any $t \ge 0$, it is enough to show that 
 $$\begin{pmatrix}
I+tH_{\varphi} A&tH_{\varphi} A\\
tH_{\varphi} A&I+tH_{\varphi} A
\end{pmatrix}
$$ is non-degenerate at $p$. 
By Burns-Guillemin's results in \cite{BG} or see Theorem \ref{thm2-0}, we can choose a $T^{n-k}$-invariant K\"ahler potential $\rho=\rho(|w_{1}|, \cdots, |w_{n-k}|, z_{1}, \bar{z}_{1},\cdots,z_{m},\bar{z}_{m}),$
such that $\mu_{j}= \frac{\partial \rho}{\partial \tilde{w}_{j}}=w_{j}\frac{\partial \rho}{\partial w_{j}}=\bar{w}_{j}\frac{\partial \rho}{\partial \bar{w}_{j}}= \frac{\partial \rho}{\partial \tilde{\bar{w}}_{j}}$, for $j=1, \cdots, n-k$. One has that $$H_{\rho}= \begin{pmatrix}
  \frac{\partial^{2} \rho}{\partial \bar{w}_{1} \partial w_{1}}& \cdots&
  \frac{\partial^{2} \rho}{\partial \bar{w}_{n-k} \partial w_{1}} & \frac{\partial^{2} \rho}{ \partial \bar{z}_{1} \partial w_{1}}& \cdots&
  \frac{\partial^{2} \rho}{ \partial \bar{z}_{m} \partial w_{1}}\\
  \cdots&  \cdots&  \cdots&  \cdots&  \cdots&  \cdots\\
 \frac{\partial^{2} \rho}{\partial \bar{w}_{1} \partial w_{n-k}}&\cdots&
  \frac{\partial^{2} \rho}{\partial \bar{w}_{n-k} \partial w_{n-k}}&
   \frac{\partial^{2} \rho}{ \partial \bar{z}_{1} \partial w_{n-k}} &\cdots&
  \frac{\partial^{2} \rho}{ \partial \bar{z}_{m} \partial w_{n-k}}\\
   \frac{\partial^{2} \rho}{\partial \bar{w}_{1} \partial z_{1}}&\cdots&
  \frac{\partial^{2} \rho}{\partial \bar{w}_{n-k} \partial z_{1}}&
  \frac{\partial^{2} \rho}{ \partial \bar{z}_{1} \partial z_{1}}& \cdots&
  \frac{\partial^{2} \rho}{ \partial \bar{z}_{m} \partial z_{1}}\\
    \cdots&  \cdots&  \cdots&  \cdots&  \cdots&  \cdots\\
  \frac{\partial^{2} \rho}{\partial \bar{w}_{1} \partial z_{m}}&\cdots&
  \frac{\partial^{2} \rho}{\partial \bar{w}_{n-k} \partial z_{m}}&
  \frac{\partial^{2} \rho}{ \partial \bar{z}_{1} \partial z_{m}}& \cdots&
  \frac{\partial^{2} \rho}{ \partial \bar{z}_{m} \partial z_{m}}
 \end{pmatrix} $$ 
 is positive definite by the plurisubharmonicity of the K\"ahler potential $\rho$. In particular, the $(n-k)\times (n-k)$ principal minor 
 $$\begin{pmatrix}
\frac{\partial^{2} \rho}{\partial \bar{w}_{1} \partial w_{1}}& \cdots&
  \frac{\partial^{2} \rho}{\partial \bar{w}_{n-k} \partial w_{1}} \\
  \cdots&  \cdots&  \cdots\\
 \frac{\partial^{2} \rho}{\partial \bar{w}_{1} \partial w_{n-k}}&\cdots&
  \frac{\partial^{2} \rho}{\partial \bar{w}_{n-k} \partial w_{n-k}}
 \end{pmatrix}$$
  is positive definite. By straightforward computations,
\begin{align*}
 &A=\begin{pmatrix}
 \frac{\partial \mu_{1}}{\partial \tilde{w}_{1} }& \cdots&
\frac{\partial \mu_{1}}{\partial \tilde{w}_{n-k} }\\
\cdots&\cdots&\cdots\\
\frac{\partial \mu_{n-k}}{\partial \tilde{w}_{1} }&\cdots&
\frac{\partial \mu_{n-k}}{\partial \tilde{w}_{n-k} }
 \end{pmatrix}= \begin{pmatrix}
 \frac{\partial \mu_{1}}{\partial \bar{\tilde{w}}_{1} }& \cdots&
\frac{\partial \mu_{1}}{\partial \bar{\tilde{w}}_{n-k} }\\
\cdots&\cdots&\cdots\\
\frac{\partial \mu_{n-k}}{\partial \bar{\tilde{w}}_{1} }&\cdots&
\frac{\partial \mu_{n-k}}{\partial \bar{\tilde{w}}_{n-k} }
 \end{pmatrix}\\
 &=\frac{1}{4}\begin{pmatrix}
\frac{\partial^{2} \rho}{\partial y_{1} \partial y_{1}}& \cdots&
 \frac{\partial^{2} \rho}{\partial y_{n-k} \partial y_{1}} \\
  \cdots&  \cdots&  \cdots\\
 \frac{\partial^{2} \rho}{\partial y_{1} \partial y_{n-k}}&\cdots&
  \frac{\partial^{2} \rho}{\partial y_{n-k} \partial y_{n-k}}
 \end{pmatrix}= \begin{pmatrix}
\frac{\partial^{2} \rho}{\partial \bar{\tilde{w}}_{1} \partial \tilde{w}_{1}}& \cdots&
  \frac{\partial^{2} \rho}{\partial \bar{\tilde{w}}_{n-k} \partial \tilde{w}_{1}} \\
  \cdots&  \cdots&  \cdots\\
 \frac{\partial^{2} \rho}{\partial \bar{\tilde{w}}_{1} \partial \tilde{w}_{n-k}}&\cdots&
  \frac{\partial^{2} \rho}{\partial \bar{\tilde{w}}_{n-k} \partial \tilde{w}_{n-k}}
 \end{pmatrix} \\
 &=  \begin{pmatrix} 
  w_{1} & &\\
&\ddots&\\
&&  w_{n-k}\end{pmatrix}   \begin{pmatrix}
\frac{\partial^{2} \rho}{\partial \bar{w}_{1} \partial w_{1}}& \cdots&
  \frac{\partial^{2} \rho}{\partial \bar{w}_{n-k} \partial w_{1}} \\
  \cdots&  \cdots&  \cdots\\
 \frac{\partial^{2} \rho}{\partial \bar{w}_{1} \partial w_{n-k}}&\cdots&
  \frac{\partial^{2} \rho}{\partial \bar{w}_{n-k} \partial w_{n-k}}
 \end{pmatrix} \begin{pmatrix} 
  \bar{w}_{1} & &\\
&\ddots&\\
&&  \bar{w}_{n-k}\end{pmatrix}.
 \end{align*}
 Since $0\ne (w_{1}, \cdots, w_{n-k}) \in T^{n-k}_{\CC}$, $A$ is positive-definite due to the positive-definiteness of $H_{\rho}$. As $\varphi$ is strictly convex, $H_{\varphi}$ is positive definite. The positive-definiteness of $A$ and $H_{\varphi}$ implies $H_{\varphi}A$ is positive definite.
 It follows that
 $\begin{pmatrix}
I+tH_{\varphi} A&tH_{\varphi} A\\
tH_{\varphi} A&I+tH_{\varphi} A
\end{pmatrix}
$
is invertible, for all $t \ge0$. Hence we have 
$$(dw^{t}\wedge d\bar{w}^{t} \wedge dz^{t}\wedge d\bar{z}^{t})(p) \ne 0, \text{with}~ dw^{t}=\prod_{i=1}^{k}dw^{t}_{i}, dz=\prod_{i=1}^{r}dz^{t}_{i}.$$
This implies that there exists some open neighbourhood $V$ around $p$ such that $dw^{t}\wedge d\bar{w}^{t} \wedge dz^{t}\wedge d\bar{z}^{t}$
 is nowhere vanishing on $V^{t}$. Therefore, the functions
$w_{j}^{t}$'s, $z_{l}^{t}$'s,
form a system of complex coordinates on $V^{t}$, defining a new complex structure $J_{t}$ on $V^{t}$ for which the coordinates $w_{j}^{t}$'s, $z_{l}^{t}$'s are holomorphic.
\end{proof}

In the above local model $T_{\CC}^{n-k}\times\CC^{r}$ with a Hamiltonian $T^{n}$-action by holomorphic isometries, its complex structure J is standard but its compatible K\"ahler form $\omega$ is not necessary standard. The action by $T^{n}=T^{n-k} \times T^{k}$ is given by the standard multiplication of $T^{n-k}$ on $T_{\CC}^{n-k}$ and a unitary representation of $T^{k}$ on $\CC^{r}$. 
Let $F$ be a subgroup of $T^{n}$ acting on $T_{\CC}^{n-k}\times\CC^{r}$ freely. Assume the $T^{n}$-action commuting with $F$ can descend to a $T^{n}$-action on $(T_{\CC}^{n-k}\times\CC^{r})/F$.
 Let $\varphi: \fot^{*} \rightarrow \RR$ be a strictly convex function and denote the  Hamiltonian vector field associated to $\varphi \circ \mu$ by $X_{\varphi}$.
 We are going to construct a family of complex structures $J_{t}$ by applying $e^{-itX_{\varphi}}$ to the coordinate functions in the following theorem.

\begin{theorem}\label{thm3-1-1}
Suppose that $T^{n}$ acts on $ ((T^{n-k}_{\CC} \times \CC^{r})/F,\omega, J)$ as described above. Then there is a one-parameter family of complex structures $J_{t}$ on the neighbourhood of $p=[(1,0)] \in (T^{n-k}_{\CC} \times \CC^{r})/F$ by applying $e^{-itX_{\varphi}}$ to  the $J$-holomorphic coordinates.
\end{theorem}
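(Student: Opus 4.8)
The plan is to reduce the quotient case entirely to the already-established Theorem \ref{thm3-1} by passing to the finite cover
$$\pi \colon \tilde U = T^{n-k}_{\CC} \times \CC^{r} \longrightarrow U = (T^{n-k}_{\CC} \times \CC^{r})/F.$$
Since $F$ is finite and acts freely by (restrictions of) holomorphic isometries, $\pi$ is an unramified finite covering and in particular a local biholomorphism; fixing $\tilde p \in \pi^{-1}(p)$, it restricts to a biholomorphism from a neighbourhood of $\tilde p$ onto a neighbourhood of $p$. The strategy is to build the new coordinates upstairs on $\tilde U$, where Theorem \ref{thm3-1} applies verbatim, and then push them down through $\pi$.

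First I would assemble the equivariant data on $\tilde U$. The $T^{n}$-action on $U$ lifts to the standard $T^{n}$-action on $\tilde U$, and $\pi$ is $T^{n}$-equivariant, so $\pi_{*}\tilde\xi^{\#} = \xi^{\#}$ for every $\xi \in \fot$. Equipping $\tilde U$ with $\pi^{*}\omega$ makes $\pi$ a local symplectomorphism, and the moment map upstairs is $\tilde\mu = \mu \circ \pi$. The standard $J$-holomorphic coordinates of $\tilde U$ near $\tilde p$ are exactly the pullbacks $\tilde w_{j} = \pi^{*} w_{j}$ and $\tilde z_{l} = \pi^{*} z_{l}$.

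The key step is to show that $X_{\varphi}$ and the Hamiltonian vector field $X_{\tilde\varphi}$ of $\varphi \circ \tilde\mu$ on $\tilde U$ are $\pi$-related. This follows from the formula $X_{\varphi} = \sum_{j} \frac{\partial \varphi}{\partial \mu_{j}}\, \xi_{j}^{\#}$ obtained in the proof of Theorem \ref{thm3-1-0}: combining $\tilde\mu = \mu \circ \pi$ with $\pi_{*}\tilde\xi_{j}^{\#} = \xi_{j}^{\#}$ gives $\pi_{*} X_{\tilde\varphi} = X_{\varphi}$. Because $\pi$-related vector fields have $\pi$-intertwined flows, the Lie series satisfy $\pi^{*}\bigl(e^{-itX_{\varphi}} w_{j}\bigr) = e^{-itX_{\tilde\varphi}}(\pi^{*} w_{j})$, and likewise for the $z_{l}$; in particular $e^{-itX_{\varphi}}$ can be applied completely to $w_{j}, z_{l}$ on $U$, since by Theorem \ref{thm3-1-0} this holds for $\tilde w_{j}, \tilde z_{l}$ on $\tilde U$.

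Finally I would invoke Theorem \ref{thm3-1} on $\tilde U$ to conclude that $\tilde w_{j}^{t} = e^{-itX_{\tilde\varphi}}\tilde w_{j}$ and $\tilde z_{l}^{t} = e^{-itX_{\tilde\varphi}}\tilde z_{l}$ form a system of complex coordinates near $\tilde p$. As these are precisely the $\pi$-pullbacks of $w_{j}^{t} = e^{-itX_{\varphi}} w_{j}$ and $z_{l}^{t} = e^{-itX_{\varphi}} z_{l}$, and $\pi^{*}$ is an isomorphism on cotangent spaces near $\tilde p$, the nonvanishing of $d\tilde w^{t}\wedge d\bar{\tilde w}^{t}\wedge d\tilde z^{t}\wedge d\bar{\tilde z}^{t}$ at $\tilde p$ transfers to the nonvanishing of $dw^{t}\wedge d\bar w^{t}\wedge dz^{t}\wedge d\bar z^{t}$ at $p$. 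Hence $w_{j}^{t}, z_{l}^{t}$ form a system of complex coordinates on a neighbourhood $V_{p}^{t}$ of $p$, defining the desired family $J_{t}$. The main point to verify with care is the $\pi$-relatedness of $X_{\tilde\varphi}$ and $X_{\varphi}$ together with the resulting intertwining of their flows, since this is exactly what allows the coordinate systems and the completeness of the Lie series to descend; once that is in place the conclusion is immediate from Theorem \ref{thm3-1}.
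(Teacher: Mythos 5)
Your proposal is correct and follows essentially the same route as the paper: pass to the unramified finite cover $\pi\colon T^{n-k}_{\CC}\times\CC^{r}\to (T^{n-k}_{\CC}\times\CC^{r})/F$, apply Theorem \ref{thm3-1} to the Hamiltonian vector field of $\pi^{*}(\varphi\circ\mu)$ with respect to $\pi^{*}\omega$ upstairs, and transfer the resulting complex structure back to the quotient. The only (harmless) difference is that the paper phrases the last step as a descent of $J_{t}$ using that $F$ acts freely and commutes with the $T^{n}$-action, whereas you transfer the coordinates through the local biholomorphism $\pi$ after checking $\pi_{*}X_{\tilde\varphi}=X_{\varphi}$ and the intertwining of the Lie series — a more explicit verification of exactly what the paper asserts.
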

\begin{proof}
Denote $\tilde{U}= T^{n-k}_{\CC} \times \CC^{r}$. By the assumption, we have a $T^{n}$-equiviarant unramified finite cover $\pi: T^{n-k}_{\CC} \times \CC^{r} \rightarrow (T^{n-k}_{\CC} \times \CC^{r})/F$.
Let $X_{\tilde{\varphi}}$ be the Hamiltonian vector field associated to $\pi^{*}(\varphi \circ \mu)$ on $T^{n-k}_{\CC} \times \CC^{r}$ with respect to $\pi^{*}\omega$. 
 By Theorem \ref{thm3-1}, we can construct a one family of complex structures $J_{t}$ by applying $e^{-itX_{\tilde{\varphi}}}$ to  the $J$-holomorphic coordinates, on the neighbourhood of $\pi^{-1}(p)$. 
 These can be descended to $(T^{n-k}_{\CC} \times \CC^{r})/F$, as $F$ acts on $T^{n-k}_{\CC} \times \CC^{r}$ freely and commute with the $T^{n}$-action.
\end{proof}

\subsection{Commuting formula}
In the last subsection, we obtain a one-parameter family of complex structures on local models. In order to glue these complex structures on local models, 
we also need to prove the following commuting formula
$$ \label{eq-com}e^{itg\frac{\partial}{\partial \theta_{j}}}f(z)=f(e^{itg\frac{\partial}{\partial \theta_{j}}}z).$$
(see Theorem \ref{thm3-2-0}). This type of formulae was first studied by Gr\"obner in \cite{Gr} for the case of holomorphic differential operators and later used by Mour\~{a}o and Nunes in \cite{MN} for this kind of gluing problems for small time $t$.
Let's start with the following basic lemmas.
\begin{lemma}\label{lem3-1-1}
 If $e^{itX}$ can be applied to $f, g \in C^{\infty}(M)$ completely, then  $e^{itX}$ can be applied to $f+g$ and $fg$ completely. Moreover
\begin{enumerate}
\item[(i)] $e^{itX}(f +g)=e^{itX}f +e^{itX}g$;
\item[(ii)]$e^{itX}(fg)=(e^{itX}f)(e^{itX}g)$.
\end{enumerate}
\end{lemma}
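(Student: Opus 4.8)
The plan is to reduce both claims to standard facts about power series, using only the linearity of $X^{k}$ and the derivation property of $X$. First I would dispose of (i), which is essentially formal. Since $X$ is a linear operator, $X^{k}(f+g)=X^{k}(f)+X^{k}(g)$ for every $k$, so the partial sums of the Lie series for $f+g$ split as the sum of the partial sums for $f$ and for $g$. Complete applicability of $e^{itX}$ to $f$ and to $g$ then gives, via the triangle inequality on each compact subset of $M\times\RR$, absolute and uniform convergence of $\sum_{k}\frac{(it)^{k}}{k!}X^{k}(f+g)$, together with the identity $e^{itX}(f+g)=e^{itX}f+e^{itX}g$.

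For (ii), the key algebraic input is the generalized Leibniz rule for the derivation $X$,
$$X^{k}(fg)=\sum_{j=0}^{k}\binom{k}{j}X^{j}(f)\,X^{k-j}(g),$$
which I would prove by induction on $k$, using $X(uv)=X(u)v+uX(v)$ at each step. Substituting this into the Lie series and using $\binom{k}{j}/k!=1/(j!\,(k-j)!)$, I would rewrite
$$e^{itX}(fg)=\sum_{k=0}^{\infty}\sum_{j=0}^{k}\frac{(it)^{j}X^{j}(f)}{j!}\cdot\frac{(it)^{k-j}X^{k-j}(g)}{(k-j)!},$$
and observe that the inner sum is exactly the $k$-th coefficient of the Cauchy product of the two series defining $e^{itX}f$ and $e^{itX}g$.

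The main obstacle, and the only genuinely analytic step, is to justify that this Cauchy product converges to the product of the two sums, and does so absolutely and uniformly on compact subsets of $M\times\RR$. I would fix a compact set $K\subset M\times\RR$ and invoke the hypothesis that both $\sum_{j}\frac{(it)^{j}}{j!}X^{j}(f)$ and $\sum_{l}\frac{(it)^{l}}{l!}X^{l}(g)$ converge absolutely and uniformly on $K$; in particular the quantities $\sup_{K}\sum_{j}\frac{|t|^{j}}{j!}|X^{j}(f)|$ and the analogous one for $g$ are finite. The standard theorem on the Cauchy product of absolutely convergent series (Mertens' theorem) then yields both the convergence of the rearranged double series and its equality with $(e^{itX}f)(e^{itX}g)$. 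To upgrade this to uniformity on $K$, I would bound the tail of the double series by the product of the tails of the two absolutely convergent factor series, using the above supremum bounds. This simultaneously shows that $e^{itX}$ can be applied to $fg$ completely and establishes the identity in (ii).
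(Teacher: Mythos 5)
Your proposal is correct and follows essentially the same route as the paper: linearity of $X^{k}$ plus the triangle inequality for (i), and the generalized Leibniz rule combined with the Cauchy-product rearrangement of two absolutely and uniformly convergent series for (ii). The only difference is that you spell out the justification (Mertens' theorem and the tail estimates for uniformity) that the paper leaves implicit, which is a welcome but not substantively different addition.
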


\begin{proof}
\begin{enumerate}
\item[(i)] It is obvious that $X^{v}(f+g)=X^{v}f+X^{v}g$ for any $v \in \NN$.  For any $0 \ll N \in \NN$,
\begin{align*}
\left|\sum_{v=0}^{N} \frac{(it)^{v}}{v!} X^{v}(f+g)- e^{itX}f -e^{itX}g\right| \le \left|\sum_{v=0}^{N} \frac{(it)^{v}}{v!}X^{v}(f)-e^{itX}f \right|+\left|\sum_{v=0}^{N} \frac{(it)^{v}}{v!}X^{v}(g)-e^{itX}g\right|
\end{align*}
By the assumption, $ \sum_{v=0}^{\infty} \frac{(it)^{v}}{v!}X^{v}(f) $ and $ \sum_{v=0}^{\infty} \frac{(it)^{v}}{v!} X^{v}(g)$ are absolutely and uniformly convergent on compact subsets in $M \times \RR$.
 It follows that  $e^{itX}$ can be applied to $f+g$ completely and 
$e^{itX}(f +g)=e^{itX}f +e^{itX}g$.
\item[(ii)] Using $X(fg)=(Xf)g+f(Xg)$, we have
$X^{v}(fg)=\sum_{l=0}^{v}\binom{v}{l}(X^{l}f)(X^{v-l}g)$. Therefore:

\begin{align*}
e^{itX} (fg)= \sum_{v=0}^{\infty} \frac{(it)^{v}}{v!}X^{v}(fg)& = \sum_{v=0}^{\infty}\sum_{l=0}^{v} \frac{(it)^{v}}{l!(v-l)!}(X^{l}f)(X^{v-l}g)\\
&=\sum_{l=0}^{\infty} \frac{(it)^{l}}{l!}X^{l}f\sum_{k=0}^{\infty} \frac{(it)^{k}}{k!}X^{k}g =(e^{itX}f)(e^{itX}g).\end{align*}

Here we have used the absolutely and uniformly convergence of $ \sum_{v=0}^{\infty} \frac{(it)^{v}}{v!}X^{v}(f) $ and $ \sum_{v=0}^{\infty} \frac{(it)^{v}}{v!} X^{v}(g)$ on compact subsets in $M \times \RR$.
 It follows that  $e^{itX}$ can be applied to $fg$ completely and 
 $e^{itX}(fg)=(e^{itX}f)(e^{itX}g)$.
\end{enumerate}
\end{proof}

\begin{lemma}\label{lem3-2-0}
Let $X_{1},X_{2}$ be two commuting vector fields. If $e^{itX_{1}}$, $e^{itX_{2}}$ and $e^{it(X_{1}+X_{2})}$ can be applied to $f \in C^{\infty}(M)$ completely, then $e^{itX_{1}}$ can be applied to $e^{itX_{2}}f$ completely and $ e^{itX_{1}}(e^{itX_{2}}f)=e^{it(X_{1}+X_{2})}f$.
\end{lemma}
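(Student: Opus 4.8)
The plan is to reduce the identity to a single double-series computation built on the commutativity $[X_1,X_2]=0$, and then to supply the analytic justification for reordering that series. The point to keep in mind is that the two assertions of the lemma — that $e^{itX_1}$ can be applied to $e^{itX_2}f$ completely, and that the result equals $e^{it(X_1+X_2)}f$ — are two faces of the same reordering: once the appropriate double series is shown to converge absolutely and uniformly on compacts, Fubini delivers both at once.

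First I would record the formal backbone. Since $X_1$ and $X_2$ commute, the binomial theorem holds at the level of differential operators, $(X_1+X_2)^k=\sum_{l=0}^{k}\binom{k}{l}X_1^{l}X_2^{k-l}$, whence
\[
e^{it(X_1+X_2)}f=\sum_{k=0}^{\infty}\frac{(it)^k}{k!}\sum_{l=0}^{k}\binom{k}{l}X_1^{l}X_2^{k-l}f=\sum_{l,m\ge 0}\frac{(it)^{l+m}}{l!\,m!}X_1^{l}X_2^{m}f,
\]
the last expression being the common ``double series'' obtained by setting $m=k-l$. Summed instead first over $m$ and then over $l$, the very same double series is $\sum_{l}\frac{(it)^l}{l!}X_1^{l}\big(e^{itX_2}f\big)=e^{itX_1}\big(e^{itX_2}f\big)$. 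So the entire lemma comes down to justifying two interchanges: (i) that $X_1^{l}$ may be applied term by term to the convergent series defining $e^{itX_2}f$, and (ii) that the resulting double series may be reordered, which simultaneously exhibits the convergence of the iterated sum, i.e. the complete applicability of $e^{itX_1}$ to $e^{itX_2}f$.

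The main obstacle is precisely these interchanges. The three hypotheses control $X_1^{l}f$, $X_2^{m}f$ and $(X_1+X_2)^{k}f$ \emph{separately}, but they do not a priori bound the mixed iterated derivatives $X_1^{l}X_2^{m}f$; worse, cancellations in $(X_1+X_2)^{k}f$ mean that absolute convergence of the $(X_1+X_2)$-series only gives a lower bound for $\sum_{l}\binom{k}{l}|X_1^{l}X_2^{k-l}f|$, so it cannot by itself dominate the double series. To close the gap I would establish directly the absolute and uniform convergence on compacts of the majorant $\sum_{l,m}\frac{|t|^{l+m}}{l!\,m!}\,|X_1^{l}X_2^{m}f|$, exploiting the real-analytic structure of the local models to which this formula is applied: there the fields annihilate the functions that occur as coefficients of the Lie series (exactly as in the closed-form computation of $w_j^{t}$ and $z_l^{t}$ in Theorem \ref{thm3-1-0}), so each iterated series telescopes into an entire exponential in $t$ and the mixed double series is dominated by a product of two such exponentials, convergent for all real $t$. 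With the double series absolutely and uniformly convergent on compact subsets of $M\times\RR$, Fubini legitimizes both (i) and (ii), yielding the complete applicability of $e^{itX_1}$ to $e^{itX_2}f$ together with the identity $e^{itX_1}(e^{itX_2}f)=e^{it(X_1+X_2)}f$; the multiplicativity of Lemma \ref{lem3-1-1} is invoked along the way to handle the product structure of the coordinate functions in that verification.
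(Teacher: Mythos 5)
Your formal skeleton is exactly the paper's: both proofs expand $(X_1+X_2)^v f$ via the binomial theorem for commuting operators and then compare the diagonal summation of the double series $\sum_{l,m}\frac{(it)^{l+m}}{l!\,m!}X_1^{l}X_2^{m}f$ with its iterated summation. The divergence is at the analytic justification of the reordering. The paper bounds the difference of the two partial sums by tails of the form $\sum \frac{|t|^{l+k}}{l!\,k!}\bigl|X_1^{l}(X_2^{k}f)\bigr|$ and then asserts these tails vanish ``using the absolute and uniform convergence'' of the hypothesized series; that justification is transplanted from Lemma \ref{lem3-1-1} (the stray $X$ and $g$ in that sentence of the paper betray the copy), where it is genuinely valid because there the mixed terms are \emph{products} $(X^{l}f)(X^{v-l}g)$ and the double series is dominated by the product of the two convergent single series. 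For compositions $X_1^{l}X_2^{m}f$ no such factorization exists, so---as you correctly observe---the three hypotheses do not by themselves dominate the mixed double series: cancellation inside $(X_1+X_2)^v f$ makes the diagonal hypothesis useless as an upper bound, and the marginal hypotheses only control $X_1^{l}f$ and $X_2^{m}f$. You have identified a real issue, and it is a defect of the paper's own proof just as much as an obstacle to yours.

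Where your proposal itself falls short is in the repair. You obtain the needed majorant $\sum_{l,m}\frac{|t|^{l+m}}{l!\,m!}\bigl|X_1^{l}X_2^{m}f\bigr|$ only by importing the structure of the local models---the fields annihilating the coefficient functions, closed-form exponential Lie series---i.e., by adding hypotheses that are not in the statement. That verifies the identity in the instances where the paper actually invokes the lemma (with $X_j=\frac{\partial\varphi}{\partial\mu_j}\xi_j^{\#}$ acting on coordinate functions, where the mixed series is indeed dominated by expressions of the type $|z|\,e^{|t||g_1|}e^{|t||g_2|}$), but it does not prove the lemma as stated, which concerns arbitrary commuting fields satisfying the three completeness hypotheses. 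A clean resolution would be either to strengthen the hypotheses of the lemma to require locally uniform absolute convergence of the mixed double series (this is what both arguments actually use), or to restate the lemma for the class of fields and functions arising in the gluing argument, as you implicitly do. As written, your proof establishes a weaker, context-specific statement; the paper's proof establishes nothing more, it merely leaves the same assumption tacit.
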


\begin{proof}
Since $X_{1}$ and $X_{2}$ commute,
$(X_{1}+X_{2})^{v}(f)=\sum_{l=0}^{v}\binom{v}{l}(X_{1}^{l}f)(X_{2}^{v-l}g)$, for $v \in \NN$.

 Observe that, for any $0 \ll N \in \NN$,
\begin{align*}
& \left|\sum_{v=0}^{N}\sum_{l=0}^{v} \frac{(it)^{v}}{l!(v-l)!}X_{1}^{l}(X_{2}^{v-l}f)- \sum_{l=0}^{N} \frac{(it)^{l}}{l!}X_{1}^{l}(\sum_{k=0}^{N} \frac{(it)^{k}}{k!}X_{2}^{k}f)\right| \\
\le &  \sum_{l=N/2}^{\infty} \sum_ {k=0}^{N/2} \frac{(it)^{l+k}}{l!(k)!}\left|X^{l}(X^{k}f)\right| + \sum_{k=N/2}^{\infty}\sum_ {l=0}^{N/2}\frac{(it)^{l+k}}{l!(k)!}\left|X^{l}(X^{k}f)\right|. 
\end{align*}

Using the absolutely and uniformly convergence of $ \sum_{v=0}^{\infty} \frac{(it)^{v}}{v!}X^{v}(f) $ and $ \sum_{v=0}^{\infty} \frac{(it)^{v}}{v!} X^{v}(g)$ on compact subsets in $M \times \RR$, one has

\begin{align*} e^{it(X_{1}+X_{2})} (f)&= \sum_{v=0}^{\infty} \frac{(it)^{v}}{v!}X^{v}(fg)=  \sum_{v=0}^{\infty}\sum_{l=0}^{v} \frac{(it)^{v}}{l!(v-l)!}X_{1}^{l}(X_{2}^{v-l}f)\\
&=\sum_{l=0}^{\infty} \frac{(it)^{l}}{l!}X_{1}^{l}\left(\sum_{k=0}^{\infty} \frac{(it)^{k}}{k!}X_{2}^{k}f\right) =e^{itX_{1}}(e^{itX_{2}}f).\end{align*}
 Therefore $e^{itX_{1}}$ can be applied to $e^{itX_{2}}f$ completely and $e^{itX_{1}}(e^{itX_{2}}f)=e^{it(X_{1}+X_{2})}f$.
\end{proof}
 
\begin{lemma}\label{lem3-1-2} 
Let $f$ be a local holomorphic function on $\CC^{m}$. Assume that
 $e^{it\frac{\partial}{\partial \theta_{j}}}$ can be applied to $f$ and coordinate functions $z_{1},\cdots, z_{m}$ completely with $z_{j}=r_{j}e^{i\theta_{j}}$,  for any $t\in \RR$. Suppose that $f$ still converges at the point $(e^{it\frac{\partial}{\partial \theta_{j}}}z_{1},\cdots, e^{it\frac{\partial}{\partial \theta_{j}}}z_{m})=: e^{it\frac{\partial}{\partial \theta_{j}}}z$. Then we have:
$$\left(e^{it\frac{\partial}{\partial \theta_{j}}}f\right)\left(z\right)=f\left(e^{it\frac{\partial}{\partial \theta_{j}}}z\right).$$
\end{lemma}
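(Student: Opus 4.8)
The plan is to expand $f$ in a convergent power series and push the operator $e^{it\frac{\partial}{\partial\theta_{j}}}$ through it monomial by monomial, using the additivity and multiplicativity of complete Lie--series application from Lemma~\ref{lem3-1-1}. Write $X=\frac{\partial}{\partial\theta_{j}}$ and, using holomorphicity of $f$ near the relevant point, expand $f=\sum_{\alpha}c_{\alpha}z^{\alpha}$ over multi-indices $\alpha=(\alpha_{1},\dots,\alpha_{m})$ with $z^{\alpha}=z_{1}^{\alpha_{1}}\cdots z_{m}^{\alpha_{m}}$. Since $z_{k}=r_{k}e^{i\theta_{k}}$, one has $Xz_{k}=iz_{k}$ for $k=j$ and $Xz_{k}=0$ otherwise, so each coordinate is an eigenfunction of $X$ and, as functions, $e^{itX}z_{j}=e^{-t}z_{j}$ while $e^{itX}z_{k}=z_{k}$ for $k\neq j$. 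Applying Lemma~\ref{lem3-1-1}(ii) inductively to the product $z^{\alpha}$ then gives $e^{itX}(z^{\alpha})=\prod_{k=1}^{m}(e^{itX}z_{k})^{\alpha_{k}}=e^{-t\alpha_{j}}z^{\alpha}$.

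Granting that $e^{itX}$ may be taken through the infinite power-series summation, this yields
\[
e^{itX}f=\sum_{\alpha}c_{\alpha}\,e^{itX}(z^{\alpha})=\sum_{\alpha}c_{\alpha}\,e^{-t\alpha_{j}}z^{\alpha}.
\]
Evaluating at $z$ and regrouping $e^{-t\alpha_{j}}z^{\alpha}=z_{1}^{\alpha_{1}}\cdots(e^{-t}z_{j})^{\alpha_{j}}\cdots z_{m}^{\alpha_{m}}$, the right-hand side is the power series of $f$ evaluated at the point whose $j$-th coordinate is $e^{-t}z_{j}$ and whose other coordinates are unchanged, which is exactly $e^{itX}z$ by the computation of $e^{itX}z_{k}$ above. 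Since $f$ is assumed to converge at $e^{itX}z$, this equals $f(e^{itX}z)$, giving the claimed identity. It is clarifying to package the argument through the single-variable function $\phi(s):=f(z_{1},\dots,e^{is}z_{j},\dots,z_{m})$: one has $\phi^{(v)}(0)=(X^{v}f)(z)$, so $(e^{itX}f)(z)=\sum_{v}\frac{(it)^{v}}{v!}\phi^{(v)}(0)$ is the Taylor series of $\phi$ at $0$ evaluated at $s=it$, whereas $f(e^{itX}z)=\phi(it)$; the lemma is then the assertion that the holomorphic function $\phi$ agrees with its Taylor expansion at $s=it$.

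The \emph{main obstacle} is precisely the interchange invoked above, that is, upgrading the additivity of Lemma~\ref{lem3-1-1}(i) from finite to infinite sums, or equivalently rearranging the double series $\sum_{v,\alpha}\frac{(it)^{v}}{v!}c_{\alpha}X^{v}(z^{\alpha})$. I would settle this by a Fubini argument for series: from $X^{v}(z^{\alpha})=(i\alpha_{j})^{v}z^{\alpha}$ the corresponding absolute double sum is
\[
\sum_{\alpha}|c_{\alpha}|\,|z^{\alpha}|\sum_{v}\frac{(|t|\alpha_{j})^{v}}{v!}=\sum_{\alpha}|c_{\alpha}|\,|z^{\alpha}|\,e^{|t|\alpha_{j}},
\]
which is the power series of $f$ with coefficients replaced by their absolute values and the $j$-th modulus dilated from $|z_{j}|$ to $e^{|t|}|z_{j}|$; its finiteness is the analytic content of the hypothesis that $e^{itX}$ applies to $f$ completely, and is reinforced by the assumption that $f$ still converges at the transformed point $e^{itX}z$. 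Once this absolute convergence is secured, Fubini for series legitimizes the interchange of the $\alpha$- and $v$-summations, and hence the term-by-term application of $e^{itX}$ supplied by Lemma~\ref{lem3-1-1}, completing the proof.
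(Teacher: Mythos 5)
Your argument is essentially correct but takes a genuinely different route from the paper's. The paper does not expand $f$ in a power series in this lemma at all: it observes that, since $f$ is holomorphic and $\frac{\partial}{\partial\theta_{j}}=D_{j}+\bar{D}_{j}$ with $D_{j}=iz_{j}\frac{\partial}{\partial z_{j}}$, one has $\left(\frac{\partial}{\partial\theta_{j}}\right)^{v}f=D_{j}^{v}f$, so both sides of the identity may be rewritten with the holomorphic vector field $D_{j}$ in place of $\frac{\partial}{\partial\theta_{j}}$, and then the exchange formula $e^{itD_{j}}f(z)=f(e^{itD_{j}}z)$ is quoted from Theorem 6 of Gr\"obner \cite{Gr}. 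Your route is more self-contained: monomials are eigenfunctions of $X=\frac{\partial}{\partial\theta_{j}}$, Lemma \ref{lem3-1-1} handles finite sums and products, and a Fubini argument handles the infinite summation. What the paper's route buys is that all convergence issues are absorbed into the cited theorem; what yours buys is an explicit elementary proof — indeed it closely resembles what the paper itself does later in the proof of Theorem \ref{thm3-2-0}, where $f$ is expanded as $\sum_{I}b_{I}z^{I}$ and truncations $f_{k}$ are used. (One shared caveat: the eigenfunction property requires the expansion to be in the monomials $z^{\alpha}$, i.e.\ centered at the origin, not "near the relevant point"; the paper makes the same implicit assumption in Theorem \ref{thm3-2-0}, so this is consistent with its conventions.)

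The one step that needs repair is your justification of the dominating series. The quantity you must bound, $\sum_{\alpha}|c_{\alpha}|\,|z^{\alpha}|\,e^{|t|\alpha_{j}}$, is absolute convergence of the series of $f$ at the point whose $j$-th modulus is \emph{dilated} to $e^{|t|}|z_{j}|$. This is \emph{not} the analytic content of the hypothesis that $e^{itX}$ applies to $f$ completely: that hypothesis gives only $\sum_{v}\frac{|t|^{v}}{v!}\bigl|(X^{v}f)(z)\bigr|<\infty$, and since $(X^{v}f)(z)=\sum_{\alpha}c_{\alpha}(i\alpha_{j})^{v}z^{\alpha}$ may be far smaller than $\sum_{\alpha}|c_{\alpha}|\alpha_{j}^{v}|z^{\alpha}|$ due to cancellation, the double-sum bound does not follow from it by any direct estimate (one would need an analytic-continuation argument instead). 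The hypothesis that actually supplies the bound is the convergence of $f$ at the transformed point — but it must be invoked at parameter $-|t|$ rather than $+t$: for $t>0$ the point $e^{itX}z$ has $|z_{j}|$ \emph{contracted} to $e^{-t}|z_{j}|$, where convergence is automatic and useless for domination. Since the lemma's hypotheses are stated for any $t\in\RR$, and convergence of a multi-index series is naturally read as absolute convergence, applying them with $t$ replaced by $-|t|$ yields exactly $\sum_{\alpha}|c_{\alpha}|\,|z^{\alpha}|\,e^{|t|\alpha_{j}}<\infty$, after which your Fubini interchange and the rest of the argument go through.
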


\begin{proof} 
Denote $D_{j}=iz_{j} \frac{\partial}{\partial z_{j}}$. Since $f$ is a holomorphic function and $\frac{\partial}{\partial \theta_{j}}=D_{j}+\bar{D}_{j}$, it can be seen that
$D_{j}^{v}f=(\frac{\partial}{\partial \theta_{j}})^{v}(f)$, for any $v \in \NN$. This implies: $ \sum_{v}\frac{(it)^{v}}{v!}(\frac{\partial}{\partial \theta_{j}})^{v}(f)= \sum_{v}\frac{(it)^{v}}{v!}D_{j}^{v}f$. We are then able to conclude:
$$e^{it\frac{\partial}{\partial \theta_{j}}}f(z)=e^{itD_{j}}f(z), ~~f\left(e^{it\frac{\partial}{\partial \theta_{j}}}z\right)=f\left(e^{itD_{j}}z\right).$$
 By Theorem 6 of \cite{Gr}, $e^{itD_{j}}f(z)=f(e^{itD_{j}}z).$ It follows
$$\left(e^{it\frac{\partial}{\partial \theta_{j}}}f\right)\left(z\right)=f\left(e^{it\frac{\partial}{\partial \theta_{j}}}z\right).$$
\end{proof}

\begin{theorem}\label{thm3-2-0}
Let $f$ be a local holomorphic function on $\CC^{m}$ in $z$ with $z_{j}=r_{j}e^{i\theta_{j}}$.  Assume that $e^{it\frac{\partial}{\partial \theta_{j}}}$ and $e^{itg\frac{\partial}{\partial \theta_{j}}}$ can be applied to $f$ and coordinate functions $z_{1},\cdots, z_{n}$ on $\CC^{m}$ completely. Let $g$ be a real smooth function such that $\frac{\partial}{\partial \theta_{j}}g=0$. Suppose that $f$ converges at the point $\left(e^{it\frac{\partial}{\partial \theta_{j}}}z_{1},\cdots, e^{it\frac{\partial}{\partial \theta_{j}}}z_{m}\right)=: \left(e^{it\frac{\partial}{\partial \theta_{j}}}z\right)$ and $\left(e^{itg\frac{\partial}{\partial \theta_{j}}}z_{1},\cdots, e^{itg\frac{\partial}{\partial \theta_{j}}}z_{m}\right)=: \left(e^{itg\frac{\partial}{\partial \theta_{j}}}z\right)$. Then we have:
\begin{equation} \label{eq-com}\left(e^{it\frac{\partial}{\partial \theta_{j}}}f\right)\left(z\right)=f\left(e^{it\frac{\partial}{\partial \theta_{j}}}z\right).\end{equation}\end{theorem}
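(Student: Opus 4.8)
The plan is to prove the stated identity \eqref{eq-com}, namely the unweighted commuting formula $\left(e^{it\frac{\partial}{\partial\theta_j}}f\right)(z)=f\left(e^{it\frac{\partial}{\partial\theta_j}}z\right)$, by reducing the real vector field $\frac{\partial}{\partial\theta_j}$ to a holomorphic differential operator and then invoking Gr\"obner's theorem. Writing $z_k=r_ke^{i\theta_k}$, I set $D_j=iz_j\frac{\partial}{\partial z_j}$, so that $\frac{\partial}{\partial\theta_j}=D_j+\bar D_j$. The whole point is that on holomorphic data the antiholomorphic half $\bar D_j$ acts trivially, which lets me trade the real operator for the holomorphic one.

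First I would exploit the holomorphicity of $f$: since $\bar D_jf=0$, one has $\left(\frac{\partial}{\partial\theta_j}\right)^vf=D_j^vf$ for every $v\in\NN$, and the same identity holds for each coordinate function $z_k$, which is itself holomorphic. Because the hypotheses guarantee that $e^{it\frac{\partial}{\partial\theta_j}}$ can be applied completely to $f$ and to $z_1,\dots,z_m$, the convergent Lie series agree term by term with the ones built from $D_j$; hence
\begin{equation*}
e^{it\frac{\partial}{\partial\theta_j}}f=e^{itD_j}f,\qquad e^{it\frac{\partial}{\partial\theta_j}}z_k=e^{itD_j}z_k\quad(k=1,\dots,m),
\end{equation*}
so in particular $e^{it\frac{\partial}{\partial\theta_j}}z=e^{itD_j}z$ as points.

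Next I would apply Theorem 6 of \cite{Gr} to the holomorphic operator $D_j$. Since $f$ is holomorphic and, by hypothesis, converges at the shifted point $e^{it\frac{\partial}{\partial\theta_j}}z=e^{itD_j}z$, that theorem yields $\left(e^{itD_j}f\right)(z)=f\left(e^{itD_j}z\right)$. Substituting the two coordinate-level identities from the previous step gives exactly
\begin{equation*}
\left(e^{it\frac{\partial}{\partial\theta_j}}f\right)(z)=\left(e^{itD_j}f\right)(z)=f\left(e^{itD_j}z\right)=f\left(e^{it\frac{\partial}{\partial\theta_j}}z\right),
\end{equation*}
which is \eqref{eq-com}. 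The auxiliary hypotheses on $g$ and on $e^{itg\frac{\partial}{\partial\theta_j}}$ play no role in this particular conclusion; they are recorded only because the same formula will later be invoked with the $g$-weighted field in the gluing step.

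The main obstacle I anticipate is analytic rather than algebraic: one must be certain that the passage from $\frac{\partial}{\partial\theta_j}$ to $D_j$ is legitimate at the level of convergent, not merely formal, series, and that Gr\"obner's theorem genuinely applies. Both points are covered precisely by the two standing assumptions in the statement, that $e^{it\frac{\partial}{\partial\theta_j}}$ can be applied completely to $f$ and the coordinates, and that $f$ still converges at $e^{it\frac{\partial}{\partial\theta_j}}z$; these supply the absolute and uniform convergence on compact sets needed to rearrange and identify the two series when they act on holomorphic arguments. In effect this re-derives Lemma \ref{lem3-1-2} under the hypotheses of the present theorem, which is consistent since the displayed conclusion \eqref{eq-com} coincides with that of the lemma.
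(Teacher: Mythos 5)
There is a genuine gap, and it is precisely the point you chose to set aside. The displayed equation in the statement of Theorem \ref{thm3-2-0} is evidently a misprint: as written it repeats Lemma \ref{lem3-1-2} verbatim, and indeed your argument is essentially a re-derivation of that lemma (decompose $\frac{\partial}{\partial\theta_j}=D_j+\bar D_j$, use holomorphicity to replace $\frac{\partial}{\partial\theta_j}$ by $D_j=iz_j\frac{\partial}{\partial z_j}$, invoke Theorem 6 of \cite{Gr}). The actual content of the theorem --- as announced in the subsection introduction, as established by the paper's own proof, and as invoked in the gluing step of Theorem \ref{thm3-3} with $g=\frac{\partial\varphi}{\partial\mu_j}$ and the field $\xi_j^{\#}$ --- is the \emph{weighted} identity
\begin{equation*}
\left(e^{itg\frac{\partial}{\partial\theta_j}}f\right)(z)=f\left(e^{itg\frac{\partial}{\partial\theta_j}}z\right),
\end{equation*}
where $g$ is a non-constant real smooth function with $\frac{\partial}{\partial\theta_j}g=0$. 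By declaring that ``the auxiliary hypotheses on $g$ play no role,'' you have proved only the unweighted case, which the paper already has as Lemma \ref{lem3-1-2}, and left unproved the statement that is actually needed to glue the local complex structures.

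The weighted case does not follow from your argument by formal substitution, because $g\frac{\partial}{\partial\theta_j}$ corresponds to $gD_j$, which is \emph{not} a holomorphic differential operator (its coefficient $g$ is only real smooth), so Gr\"obner's theorem cannot be applied to it directly. The paper's proof gets around this as follows: the hypothesis $\frac{\partial}{\partial\theta_j}g=0$ gives $(g\frac{\partial}{\partial\theta_j})^v f=g^v D_j^v f$ for all $v$; one then expands $f=\sum_I b_I z^I$ and works with the polynomial truncations $f_k$, for which Lemma \ref{lem3-1-1} yields $f_k\left(e^{itg\frac{\partial}{\partial\theta_j}}z\right)=\sum_v\frac{(itg)^v}{v!}D_j^v f_k(z)$ by purely algebraic manipulation; next, the exchange of sum and limit
$\sum_v\frac{(it)^v}{v!}\lim_{k\to\infty}D_j^v f_k=\lim_{k\to\infty}\sum_v\frac{(it)^v}{v!}D_j^v f_k$
(equation \eqref{eqcov}) is established for constant $t$ via Gr\"obner's theorem; finally --- and this is the step your proposal has no substitute for --- since $g$ is real-valued one may replace $t$ by $tg$ \emph{pointwise} in that convergence statement, which produces the weighted exchange of limits and hence the weighted commuting formula. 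Without this truncation-and-substitution mechanism (or some alternative control of the series $\sum_v\frac{(itg)^v}{v!}D_j^v f$), your proof establishes only the $g\equiv 1$ case and the theorem's role in the paper is not fulfilled.
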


\begin{proof} As before
$D_{j}^{v}f=(\frac{\partial}{\partial \theta_{j}})^{v}f$. Since $\frac{\partial}{\partial \theta_{j}}g=0$, we have $(g\frac{\partial}{\partial \theta_{j}})^{v}f=g^{v}(\frac{\partial}{\partial \theta_{j}})^{v}f$, for any $v \in \NN$. This gives rise to:
$$ e^{itg\frac{\partial}{\partial \theta_{j}}}f= \sum_{v}\frac{(itg)^{v}}{v!}(\frac{\partial}{\partial \theta_{j}})^{v}f= \sum_{v}\frac{(itg)^{v}}{v!}D_{j}^{v}f.$$ 

As $f(z)$ is holomorphic in $z$, expressed as
$f(z_{1},\cdots, z_{m})=\sum_{I} b_{I}z^{I}, \text{with}~I=(i_{1}, \cdots,i_{m})$, the truncated polynomial  
$$f_{k}(z)= \sum_{|I| \le k} b_{I}z^{I},$$
always converge to $f(z)$ as $k \rightarrow \infty$. We also have $\lim_{k\rightarrow \infty} \frac{\partial}{\partial z_{j}}f_{k} (z)=\frac{\partial f(z)}{\partial z_{j}}$ and similar ones for higher derivatives hold. 
 As $f(z)$ is well-defined at $(e^{it\frac{\partial}{\partial \theta_{j}}}z)$ and $(e^{itg\frac{\partial}{\partial \theta_{j}}}z)$, we have:

$$\lim_{k\rightarrow \infty}f_{k}(z)=f(z) ,\lim_{k\rightarrow \infty}f_{k}(e^{it\frac{\partial}{\partial \theta_{j}}}z)=f(e^{it\frac{\partial}{\partial \theta_{j}}}z),\lim_{k\rightarrow \infty}f_{k}(e^{itg\frac{\partial}{\partial \theta_{j}}}z)=f(e^{itg\frac{\partial}{\partial \theta_{j}}}z).$$

As $D_{j}=iz_{j} \frac{\partial}{\partial z_{j}}$ is a holomorphic differential operator, 
$$\lim_{k\rightarrow \infty}D_{j}^{v}f_{k}(z)=D_{j}^{v}f(z), 
e^{itg\frac{\partial}{\partial \theta_{j}}}f=\sum_{v}\frac{(itg)^{v}}{v!}D_{j}^{v}f =  \sum_{v}\frac{(itg)^{v}}{v!}\lim_{k\rightarrow \infty}D_{j}^{v}f_{k}.$$
On the other hand, by Lemma \ref{lem3-1-1}, one obtains:
$f_{k}(e^{it\frac{\partial}{\partial \theta_{j}}}z)=e^{it\frac{\partial}{\partial \theta_{j}}} f_{k}(z)= \sum_{v}\frac{(it)^{v}}{v!}D_{j}^{v}f_{k}(z)$, and $f_{k}(e^{itg\frac{\partial}{\partial \theta_{j}}}z)=e^{itg\frac{\partial}{\partial \theta_{j}}} f_{k}(z)= \sum_{v}\frac{(itg)^{v}}{v!}D_{j}^{v}f_{k}(z)$. It follows that:
$$f(e^{itg\frac{\partial}{\partial \theta_{j}}}z)=\lim_{k\rightarrow \infty}f_{k}(e^{itg\frac{\partial}{\partial \theta_{j}}}z)=\lim_{k\rightarrow \infty}\sum_{v}\frac{(itg)^{v}}{v!}D_{j}^{v}f_{k}(z).$$
By Theorem 6 of \cite{Gr} or Lemma \ref{lem3-1-2}, it can be seen that: for any $t >0$
\begin{equation}\label{eqcov} \sum_{v}\frac{(it)^{v}}{v!}\lim_{k\rightarrow \infty}D_{j}^{v}f_{k}=\lim_{k\rightarrow \infty}\sum_{v}\frac{(it)^{v}}{v!}D_{j}^{v}f_{k}.\end{equation}
Since the smooth function $g$ is a real, we can replace $t$ by $gt$ in equation (\ref{eqcov}) and obtain
$$  \sum_{v}\frac{(itg)^{v}}{v!}\lim_{k\rightarrow \infty}D_{j}^{v}f_{k}=\lim_{k\rightarrow \infty}\sum_{v}\frac{(itg)^{v}}{v!}D_{j}^{v}f_{k}.$$
Therefore,
$$  e^{itg\frac{\partial}{\partial \theta_{j}}}f(z)=  \sum_{v}\frac{(itg)^{v}}{v!}\lim_{k\rightarrow \infty}D_{j}^{v}f_{k}(z)=\lim_{k\rightarrow \infty}f_{k}(e^{itgD_{j}}z) =f(e^{itg\frac{\partial}{\partial \theta_{j}}}z).$$
\end{proof}

\subsection{Construction of $\{J_{t}\}_{t\ge0}$ via gluing local models}
In this subsection, we glue the complex structures on local models to construct a one-parameter family of complex structures $J_{t}$ on $M$ in Theorem \ref{thm3-3} under the assumption $(*)$. Furthermore, we show that $J_{t}$ is compatible with $\omega$ and the corresponding path of K\"ahler metrics $g_{t}=\omega(-,J_{t}-)$ is a complete geodesic ray in the space of K\"ahler metrics of $M$ in Theorem \ref{thm3-4-1}.

\begin{theorem}\label{thm3-3}
Under the assumption $(*)$, for any $t >0$, there exists a complex structure $J_{t}$ given by applying $e^{-itX_{\varphi}}$ to $J$-holomorphic coordinates and a unique biholomorphism:
 $$\psi_{t}: (M,J_{t}) \rightarrow (M,J).$$
\end{theorem}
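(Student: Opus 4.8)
The plan is to build $J_t$ and $\psi_t$ one chart at a time on the local models supplied by Sjamaar's slice theorem, and then to glue the pieces using the commuting formula. Since $T^n$ is abelian, every orbit is isotropic (the remark following Theorem \ref{thm2-2}), so by Theorem \ref{thm2-2} each point $p\in M$ has a $T^n_\CC$-invariant neighbourhood modeled, as a Hamiltonian $T^n$-space, on $(T^{n-k}_\CC\times\CC^r)/F$. On each such model Theorem \ref{thm3-1-1} produces $J$-holomorphic coordinates $w_j,z_l$, the flowed functions $w_j^{t}=e^{-itX_\varphi}w_j$ and $z_l^{t}=e^{-itX_\varphi}z_l$ with the closed forms of Theorem \ref{thm3-1-0}, and a local complex structure $J_t$ for which the $w_j^{t},z_l^{t}$ are holomorphic, valid for all $t\ge 0$. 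In the same chart I would record the local map $\psi_t$ defined by $w_j\circ\psi_t=w_j^{t}$ and $z_l\circ\psi_t=z_l^{t}$, which is by construction a local biholomorphism $(V,J_t)\to(V,J)$ (it is the identity read in the respective coordinate systems).

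The decisive step is the gluing. On the overlap of two local models the two systems of $J$-holomorphic coordinates are related by holomorphic transition functions $F$. Writing $X_\varphi=\sum_j (\partial\varphi/\partial\mu_j)\,\xi_j^{\#}$ with $T^n$-invariant coefficients and pairwise commuting fundamental fields $\xi_j^{\#}=\partial/\partial\theta_j$, I would first invoke Lemma \ref{lem3-2-0} to factor $e^{-itX_\varphi}$ into the one-parameter operators $e^{-it(\partial\varphi/\partial\mu_j)\partial_{\theta_j}}$, and then apply the commuting formula of Theorem \ref{thm3-2-0} to each factor to obtain $e^{-itX_\varphi}\bigl(F(w)\bigr)=F\bigl(e^{-itX_\varphi}w\bigr)$. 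Hence the flowed coordinate systems are related by the \emph{same} holomorphic $F$, so the locally defined $J_t$ and the locally defined maps $\psi_t$ agree on overlaps and patch to a global complex structure $J_t$ on $M$ and a global smooth map $\psi_t\colon M\to M$ satisfying $\psi_t^{*}w_j=w_j^{t}$ in every chart. This is the heart of the proof and the reason the commuting formula was established.

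It remains to verify that $\psi_t$ is a biholomorphism. In each chart $\psi_t$ carries the $J_t$-holomorphic coordinates to the equal-valued $J$-holomorphic coordinates, so $\psi_t^{*}$ sends $J$-holomorphic functions to $J_t$-holomorphic functions; thus $\psi_t$ is holomorphic as a map $(M,J_t)\to(M,J)$. By Theorem \ref{thm3-1} the Jacobian of $\psi_t$ is nonvanishing for every $t\ge 0$ — precisely where strict convexity of $\varphi$ and plurisubharmonicity of the $T^n$-invariant Kähler potential enter, through positive-definiteness of $I+tH_\varphi A$ — so $\psi_t$ is a local diffeomorphism. Since $\psi_0=\operatorname{id}$ and $t\mapsto\psi_t$ is continuous, $\psi_t$ is a degree-one local diffeomorphism of the compact connected manifold $M$, hence a global diffeomorphism, and a holomorphic diffeomorphism is a biholomorphism. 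For uniqueness, any biholomorphism $(M,J_t)\to(M,J)$ whose pullback sends $w_j,z_l$ to $w_j^{t},z_l^{t}$ is determined locally by these functions, which separate points, and therefore coincides with $\psi_t$.

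The long-time existence for each fixed $t\ge 0$ is already secured locally by the closed-form Lie series of Theorem \ref{thm3-1-0}, so the genuine difficulty is global: making the charts' complex structures and coordinate maps mutually consistent on overlaps. I expect this gluing — which rests on commuting $e^{-itX_\varphi}$ past holomorphic transition functions via Theorem \ref{thm3-2-0} — to be the main obstacle, together with upgrading the local diffeomorphism $\psi_t$ to a global one, for which compactness of $M$ and the convexity-driven nondegeneracy of Theorem \ref{thm3-1} are the essential inputs.
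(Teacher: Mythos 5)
Your proposal follows the paper's strategy almost step for step: Sjamaar's slice theorem reduces to the local models $(T^{n-k}_{\CC}\times\CC^{r})/F$; Theorems \ref{thm3-1-0}, \ref{thm3-1} and \ref{thm3-1-1} give the flowed coordinates and the local $J_{t}$ for all $t\ge 0$; Lemma \ref{lem3-2-0} together with the commuting formula of Theorem \ref{thm3-2-0} (applicable because the coefficients $\partial\varphi/\partial\mu_{j}$ are $T^{n}$-invariant, so $\xi_{j}^{\#}(\partial\varphi/\partial\mu_{j})=0$) shows that the flowed charts have the \emph{same} holomorphic transition functions $\phi_{\alpha\beta}$ as the original ones; and your local map defined by $w_{j}\circ\psi_{t}=w_{j}^{t}$, $z_{l}\circ\psi_{t}=z_{l}^{t}$ is exactly the paper's $\psi_{t,\beta}=z_{p_{\beta}}^{-1}\circ z_{p_{\beta}}^{t}$, glued along overlaps by the same commuting-diagram argument. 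Up to this point the two proofs coincide.

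The one place you genuinely depart from the paper is the proof that $\psi_{t}$ is invertible, and there your argument has a gap. You upgrade the local diffeomorphism to a global one by a degree argument: $\psi_{0}=\mathrm{id}$, $t\mapsto\psi_{t}$ continuous, hence $\psi_{t}$ is a degree-one local diffeomorphism of a \emph{compact} connected manifold, hence bijective. But compactness of $M$ is not part of assumption $(*)$: the hypothesis $(*_{0})$ as stated in the introduction asks only for a K\"ahler manifold with an effective Hamiltonian torus action, and the paper adds ``provided $M$ is compact'' as a separate hypothesis precisely where it is needed, namely for the geodesic-ray statement in Theorem \ref{thm3-4-1}. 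The paper's proof avoids the issue entirely: the inverse of $\psi_{t}$ is produced explicitly by running the flow backwards, $\psi_{-t,\beta}=(z_{p_{\beta}}^{t})^{-1}\circ z_{p_{\beta}}$, using $e^{itX_{\varphi}}z_{p_{\beta}}^{t}=z_{p_{\beta}}$ (which follows from Lemma \ref{lem3-2-0} applied to $X_{\varphi}$ and $-X_{\varphi}$), so no global topology of $M$ enters. Your argument is correct when $M$ is compact, but to obtain the theorem in its stated generality you should replace the degree argument by this backward-flow inverse --- machinery you already have in hand.
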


\begin{theorem}\label{thm3-4-1}
Under the assumption $(*)$, for any $t \ge 0$, $(M,\omega, J_{t})$ is a K\"ahler manifold. Moreover the path of K\"ahler metrics $g_{t}=\omega(-,J_{t}-)$ is a complete geodesic ray in the space of K\"ahler metrics of $M$. 
\end{theorem}

\begin{proof}[Proof of Theorem \ref{thm3-3}]
 According to \cite[Theorem 1.12]{Sj}, for any $p \in M$, we can build a $T^{n}$-equivariant biholomorphic map from a $T^{n}$-invariant neighbourhood $U_{p} \subset T^{n}_{\CC} \times_{H_{p}^{\CC}}\CC^{r}$ around $e=[(1,0)] \in T^{n}_{\CC} \times_{H_{p}^{\CC}}\CC^{r}$ to a $T^{n}$-invariant neighbourhood of $p$ as follows. Since $T^{n}$ is abelian, the $T^{n}$-orbit through $p$ is isotropic and $\mu(p)$ is fixed under the co-adjoint action of $T^{n}$. After shifting the moment map we assume that $\mu(p)=0$. Let $H_{p}$ be the stabilizer of $p$ with respect to the $T^{n}$-action. Then by \cite[Proposition 1.6]{Sj} the stabilizer with respect to the $T^{n}_{\CC}$-action is the complexification $H_{p}^{\CC}$ of $H_{p}$, which has the form of $H_{p}^{\CC}= T_{\CC}^{k} \times F$ with $F$ being a finite subgroup of $T^{n}_{\CC}$. We identify the tangent space $T_{p}M$ at $p$ with $\CC^{m}$. The tangent action of $H_{p}^{\CC}$ defines a linear representation $H_{p}^{\CC} \rightarrow GL(m,\CC)$, the restriction of which to $H_{p}$ is a unitary representation $H_{p} \rightarrow U(m)$. Note that the tangent space to the complex orbit $T^{n}_{\CC}p$ at point $p$ is a complex subspace of $T_{p}M \cong \CC^{m}$. Denote its orthogonal complement by $V$, then $V$ is an $H_{p}^{\CC}$-invariant subspace, which can be identified with $\CC^{r}$ for $r=m-n+k$. According to the proof of \cite[Theorem 1.12]{Sj},
there exists a $T^{n}$-invariant open neighbourhood $$U_{p}=T^{n} exp(\sqrt{-1}D)B \subset T^{n}_{\CC} \times_{H_{p}^{\CC}}\CC^{r}$$ around $e$ and $T^{n}$-equivariant biholomorphic map:
$\phi_{1}: U_{p} \rightarrow M$ such that $\phi_{1}$ is a biholomorphic map onto an open neighbourhood of $T^{n}p$ in $M$ and $\phi_{1}(e)=p$, where $D$ and $B$ are small balls centered at the origins in $T_{p}(T^{n}p)$ and $\CC^{r}$ respectively.

Then we divide the proof into two steps.
 We will first show that for any point $p \in M$, there exists a $T^{n}$-invariant holomorphic chart $V_{p}$ around $p$ and applying $e^{-itX_{\varphi}}$ to holomorphic coordinates gives us a family of complex structures $J_{t}$ in some neighbourhood $V_{p}^{t} $. Next we show that the local complex structures $\{(V_{p}^{t}, J_{t})\}'s$ can be glued together defining complex structures $J_{t}$ on $M$, for all $t >0$. Fixing a splitting $T^{n}_{\CC}=T^{n-k}_{\CC} \times T_{\CC}^{k}$, we have a $T^{n}_{\CC}$-equivariant biholomorphism
$$ \phi_{2}: (T_{\CC}^{n-k}\times \CC^{r})/F \rightarrow T^{n}_{\CC}\times_{H_{p}^{\CC}} \CC^{r}, (t,v)\mapsto[(t,1),v],$$
where $T^{n-k}_{\CC}$ acts on $T^{n-k}_{\CC}$ by right multiplication, $T_{\CC}^{k}$ acts on $\CC^{r}$ defining a representation $T_{\CC}^{k} \rightarrow \GL(r,\CC)$ determined by $H_{p} \rightarrow \GL(r,\CC)$, and $F$ is a finite subgroup of $T^{n}_{\CC}$.

\begin{enumerate}
\item When $F$ is trivial, let $(z_{1} \cdots,z_{m})$ be the standard $J$-holomorphic coordinates on $T^{n-k}_{\CC}\times \CC^{r}$. For any $t >0$, by Theorem \ref{thm3-1-0} and Theorem \ref{thm3-1}, the functions
$$ z_{l}^{t}=e^{-itX_{\varphi}}z_{l},~ l=1, \cdots, m,$$ form a system of complex coordinates on some open neighbourhood $V_{p}^{t}$ of $e$, defining a new complex structure $J_{t}$ on $V_{p}^{t}$ for which the coordinates $\{ z_{1}^{t}, \cdots, z_{m}^{t}\}$ are holomorphic.
\item For general $F$, let $(z_{1} \cdots,z_{m})$ be the standard $J$-holomorphic coordinates on $V_{p}\subset (T_{\CC}^{n-k}\times \CC^{r})/F$. For any $t >0$, by Theorem \ref{thm3-1-1}, the functions
$$ z_{l}^{t}=e^{-itX_{\varphi}}z_{l},~ l=1, \cdots, m,$$ form a system of complex coordinates on open neighbourhoods $V_{p}^{t}$ of $e$, defining a new complex structure $J_{t}$ on $V_{p}^{t}$ for which the coordinates $\{ z_{1}^{t}, \cdots, z_{m}^{t}\}$ are holomorphic.

\end{enumerate}
The last step is to glue these complex structures on local models to define complex structures $J_{t}$ on $M$.
For $t>0$, let $\{V_{p}, z_{p}\}_{p \in M}$ be $J$-holomorphic local charts constructed as above.  Let $\phi_{\alpha \beta}$'s be the coordinate transition functions, that is, $\phi_{\alpha \beta}$'s are biholomorphic functions and 
$$z_{p_{\alpha}} = \phi_{\alpha \beta} \circ z_{p_{\beta}}, \text{with}~  z_{p_{\beta}}:V_{p_{\beta}} \rightarrow \foV_{p_{\beta}} \subset \CC^{m} $$
Similar to Theorem \ref{thm3-1-0}, we can show that $e^{it\xi_{j}^{\#}}$
and $e^{it \frac{\partial \varphi}{\partial \mu_{j}}\xi_{j}^{\#}}$ can be applied to $\{z_{p}\}$ completely. Note that $\xi_{j}^{\#} (\frac{\partial \varphi}{\partial \mu_{j}})=0$. By Theorem \ref{thm3-2-0}, we have 
$$e^{it \frac{\partial \varphi}{\partial \mu_{j}}\xi_{j}^{\#}} z_{p_{\alpha}} = \phi_{\alpha \beta} \circ ( e^{it \frac{\partial \varphi}{\partial \mu_{j}}\xi_{j}^{\#}} z_{p_{\beta}}).$$
Since $\varphi \circ \mu$ and $\mu$ are $T^{n}$-invariant functions, $ \frac{\partial \varphi}{\partial \mu_{1}}\xi_{1}^{\#}, \cdots, \frac{\partial \varphi}{\partial \mu_{n}}\xi_{n}^{\#}$ commute with each other. Then by Lemma \ref{lem3-2-0}, we obtain:
$$z_{p_{\alpha}}^{t}=e^{-itX_{\varphi}}z_{p_{\alpha}}= e^{-itX_{\varphi}} ( \phi_{\alpha \beta} \circ z_{p_{\beta}})=\phi_{\alpha \beta} \circ (e^{-itX_{\varphi}} z_{p_{\beta}})=\phi_{\alpha \beta} \circ z_{p_{\beta}}^{t}.$$
Note that $\phi_{\alpha \beta}$ independent of $t$.
This implies that $\{V_{p_{\alpha}}^{t}, z_{p_{\alpha}}^{t}\}$'s is a new atlas on $M$ with the local transition functions $\phi_{\alpha \beta}$'s on $V_{p_{\alpha}}^{t} \cap V_{p_{\beta}}^{t}$. Therefore $\{V_{p_{\alpha}}^{t}, z_{p_{\alpha}}^{t}\}$'s define a new complex structure and we have the following commuting diagram:
$$
\begin{tikzcd}
&     \foV_{p_{\alpha}}^{t} \arrow{dd}{\phi_{\beta \alpha}} \arrow{rr}{id} &&\foV_{p_{\alpha}} \arrow{dd}{\phi_{\beta \alpha}} &\\
V_{p_{\alpha}}^{t}\cap V_{p_{\beta}}^{t} \arrow{ur}{z^{t}_{p_{\alpha}}} \arrow{dr}{z_{p_{\beta}}^{t}} &&&&V_{p_{\alpha}}\cap V_{p_{\beta}} \arrow{ul}{z_{p_{\alpha}}} \arrow{dl}{z_{p_{\beta}}}\\
&   \foV_{p_{\beta}}^{t} \arrow{rr}{id} && \foV_{p_{\beta}} &
\end{tikzcd}.
$$
We define $\psi_{t,\beta}= z_{p_{\beta}}^{-1} \circ z_{p_{\beta}}^{t}: V_{p_{\beta}}^{t} \rightarrow V_{p_{\beta}}$. The above commuting diagram guarantees that $\{\psi_{t,\beta}\}$
can be glued together to obtain a well defined global biholomorphism $$\psi_{t}: (M, J_{t}) \rightarrow (M,J).$$
It is easy to see the inverse map $\psi_{-t}$ exists on the chart $\psi_{t}(V_{p_{\beta}}^{t})$, we have:
$$\psi_{-t,\beta}= (z_{p_{\beta}}^{t})^{-1} \circ z_{p_{\beta}}, \text{and}~~e^{itX_{\varphi}}z^{t}_{p_{\beta}}=z_{p_{\beta}}.$$ 
Therefore, $\psi_{t}$ is the unique biholomorphism from $(M,J_{t})$ to $(M,J_{0})$ such that:
$z_{p_{\beta}}^{t}= z_{p_{\beta}}\circ \psi_{t,\beta}$.
\end{proof}

Denote $J_{0}=J$, according to Theorem \ref{thm3-3}, $\psi_{t}: (M,J_{t}) \rightarrow (M,J_{0})$ is a biholomorphism, on local $J_{0}$ holomorphic coordinates, acts as $e^{-it X_{\varphi}}$. Observe that $J_{t}=\psi_{t}^{*}J_{0}$ with $\psi_{0}=id$. Since $(M,J_{0}, \omega)$ is a K\"ahler manifold and $X_{\varphi}$ is a Hamiltonian vector field. Then we confirm that $J_{t} $ is compatible with $\omega$, and the path of K\"ahler metrics $g_{t}=\omega(-,J_{t}-)$ is a complete geodesic ray in the space of K\"ahler metrics of $M$ as follows.

\begin{proof}[Proof of Theorem \ref{thm3-4-1}]
To prove that $(M,\omega, J_{t})$ is a K\"ahler manifold, it is enough to show that $\omega$ is of type $(1,1)$ with respect to $J_{t}$ and the Riemannian metric $g_{t}(-,-)= \omega(-, J_{t}(-))$ is positive definite, for all $t >0$.
Let $z_{j}^{0}$'s be the holomorphic coordinates on an open set $U$ of $M$ with respect to $J_{0}=J$ and $z_{j}^{t}= e^{-itX_{\varphi}}z_{j}^{0}$ be the holomorphic coordinates with respect to $J_{t}$. Since $(M,\omega, J_{0})$ is a K\"ahler manifold, $\omega$ is of type $(1,1)$ with respect to $J_{0}$, which is equivalent to $$\omega(X_{z_{i}^{0}},X_{z_{j}^{0}})=0, \text{for all}~~ i,j,$$
where $X_{z_{j}^{0}}$ is the Hamiltonian vector field associated to $z_{j}^{0}$. Since $\shL_{X_{\varphi}} \omega=0$, following the argument in \cite[Theorem 4.1]{MN}, one has $\{ z^{t}_{i},z^{t}_{j}\}=0$ for $i,j=1,\cdots,m$.
 That is $$dz^{t}_{i}(X_{z^{t}_{j}})= \omega(X_{X_{z^{t}_{j}}}, X_{X_{z^{t}_{i}}}) =0, ~\text{for all} ~~i,j.$$
 Therefore $\omega$ is of type $(1,1)$ with respect to $J_{t}$ and $g_{t}=\omega(-,J_{t}-)$ is a pseudo-K\"ahler metric. By the assumption, $g_{0}=\omega(-,J_{0}-)$ is a K\"ahler metric. This implies $g_{t}$ is also K\"ahler metric, for all $t >0$.
 
 It remains to show $g_{t}$ is a geodesic path in the space of K\"ahler metrics of $M$. In \cite{MN}, it was proved that $g_{t}$, given by imaginary time flow, satisfies the geodesic equation in the space of K\"ahler metrics, which is equivalent to the one studied by Donaldson. As our imaginary time flow exists for all  $t>0$, $\{g_{t}\}_{t \in \RR_{+}}$ is  a complete geodesic ray.
  \end{proof}
 
 \subsection{A family of K\"ahler polarizations $\shP_{t}$ degenerating to polarization $\shP_{\mathrm{mix}}$}
 In this subsection, we study the relation between K\"ahler 
 polarization $\shP_{J}$ associated to complex structure the $J$ and $\shP_{\mathrm{mix}}$ (see Theorem \ref{thm3-0-5}) constructed in subsection \ref{sec1}. Similar problems have been investigated for other symplectic manifolds including symplectic vector space in \cite{KW}, cotangent bundles of compact Lie groups in \cite{FMMN1, FMMN2, Hal}, toric varieties in \cite{BFMN}, and flag manifold in \cite{HK}. 
 
\begin{theorem}\label{thm3-5}
Under the assumption $(*)$,
let $J_{t}$ be the one-parameter family of complex structures constructed in Theorem \ref{thm3-3}. Then we have
$$\lim_{t\rightarrow \infty} \shP_{t}= \shP_{\mathrm{mix}}.$$
That is, $\lim_{t\rightarrow \infty} (\shP_{t})_{p}= (\shP_{\mathrm{mix}})_{p}$,  where the limit is taken in the Lagrangian Grassmannian of the complexified tangent space at point $p \in M$. 
\end{theorem}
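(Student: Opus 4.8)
The plan is to reduce everything to the local models of Theorem~\ref{thm3-1} and to read off the limit directly from the explicit formulae $w_j^t=w_je^{t\partial\varphi/\partial\mu_j}$ and $z_l^t=z_le^{t(\sum_\gamma \partial\varphi/\partial\mu_\gamma\, b_{\gamma l})}$. Since $T^n$ is abelian every orbit is isotropic, so Sjamaar's theorem (Theorem~\ref{thm2-2}) provides, for \emph{every} point $p\in M$, a holomorphic slice presenting a neighbourhood of $p$ as the centre $[(1,0)]$ of a model $(T^{n-k}_\CC\times\CC^r)/F$; passing to the finite cover as in the proof of Theorem~\ref{thm3-3} we may take $F$ trivial. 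As the assertion $\lim_{t\to\infty}(\shP_t)_p=(\shP_{\mathrm{mix}})_p$ is pointwise and $J_t$ is defined by applying $e^{-itX_\varphi}$ to the holomorphic coordinates of exactly these charts, it suffices to verify the limit at the centre $p$ of each model.

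At the centre I would describe $\shP_t=T^{0,1}_{J_t}$ as the common kernel of the holomorphic differentials $dw_j^t$ and $dz_l^t$. Writing $w_j=e^{\tilde w_j}$ with $\tilde w_j=\log|w_j|+i\theta_j$, Theorem~\ref{thm3-1-0} gives $dw_j^t=w_j^t\,d\tilde w_j^t$ with $d\tilde w_j^t=d\tilde w_j+t\sum_\gamma \frac{\partial^2\varphi}{\partial\mu_j\partial\mu_\gamma}\,d\mu_\gamma$. At $p$ we have $z=0$, so $dz_l^t$ is a nonzero multiple of $dz_l$ and hence $\ker dz_l^t=\ker dz_l$; moreover the $T^k$-invariant components $\mu_\gamma$ with $\gamma>n-k$ satisfy $d\mu_\gamma|_p=0$, since there they depend only on the $|z_l|^2$. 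Invoking Burns--Guillemin (Theorem~\ref{thm2-0}) to write $d\mu_\gamma|_p=\sum_i A_{\gamma i}(d\tilde w_i+d\bar{\tilde w}_i)$ for $\gamma\le n-k$, with $A$ the positive-definite matrix of Theorem~\ref{thm3-1}, the $w$-block of the defining covectors becomes $(I+tB)\,d\tilde w+tB\,d\bar{\tilde w}$, where $B=H_\varphi A$ and $H_\varphi$ is the Hessian of $\varphi$.

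Solving for the kernel, the $w$-directions of $(\shP_t)_p$ are the vectors $v(c)=-tBc\cdot\partial/\partial\tilde w+(I+tB)c\cdot\partial/\partial\bar{\tilde w}$ for $c\in\CC^{n-k}$, while the $z$-directions contribute the constant space $\mathrm{span}\{\partial/\partial\bar z_l\}$. Rescaling $v(c)$ by $t^{-1}$ and using that $B=H_\varphi A$ is positive definite, hence invertible, one finds $t^{-1}v(c)\to -Bc\cdot\partial/\partial\tilde w+Bc\cdot\partial/\partial\bar{\tilde w}$; as $Bc$ ranges over $\CC^{n-k}$ these limits span $\{-\partial/\partial\tilde w_j+\partial/\partial\bar{\tilde w}_j\}=\{i\,\partial/\partial\theta_j\}_{j\le n-k}$, which are exactly the fundamental vector fields spanning $(\shI_\CC)_p$. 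On the other hand, at $p$ one computes $(\shI_\CC)_p=\mathrm{span}\{\partial/\partial\theta_j:j\le n-k\}$ (the fields $\xi_\gamma^\#$ for $\gamma>n-k$ vanish at $z=0$), and $(\shP_J\cap\shD_\CC)_p=\mathrm{span}\{\partial/\partial\bar z_l\}$, because $d\mu|_p$ cuts out precisely the vanishing of the $\partial/\partial y_j$-components. Hence $\lim_{t\to\infty}(\shP_t)_p=\mathrm{span}\{\partial/\partial\theta_j\}\oplus\mathrm{span}\{\partial/\partial\bar z_l\}=(\shP_{\mathrm{mix}})_p$.

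The step I expect to be the main obstacle is justifying that this is a genuine limit in the Lagrangian Grassmannian, rather than only a subsequential one, and that the limiting space is Lagrangian of the full real dimension $m$. This is precisely where the two positivity hypotheses combine: strict convexity of $\varphi$ makes $H_\varphi$ positive definite, plurisubharmonicity of the $T^{n-k}$-invariant K\"ahler potential makes $A$ positive definite, and the resulting positive-definiteness of $B=H_\varphi A$ forces the rescaled kernels $t^{-1}v(c)$ to converge along the full parameter and to remain transverse to the fixed $z$-directions, so that the $m$-dimensional subspaces $(\shP_t)_p$ converge to an $m$-dimensional isotropic limit.
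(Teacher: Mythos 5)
Your proposal is correct and follows essentially the same route as the paper: reduction to Sjamaar's local models with $F$ handled via the finite cover, the explicit flow formulas of Theorem \ref{thm3-1-0}, and the combination of plurisubharmonicity of the Burns--Guillemin potential (positive definiteness of $A$) with strict convexity of $\varphi$ (positive definiteness of $H_{\varphi}$) to force the $w$-directions of $\shP_{t}$ to degenerate onto $\mathrm{span}\{\partial/\partial\theta_{j}\}_{\CC}$ while the $\bar{z}$-directions stay fixed. The only cosmetic difference is that you describe $(\shP_{t})_{p}$ as the kernel of the $(1,0)$-forms $\{dw_{j}^{t},dz_{l}^{t}\}$ and rescale the kernel vectors by $t^{-1}$, whereas the paper spans $(\shP_{t})_{p}$ by the $(0,1)$-coordinate vectors and shows their $\partial/\partial\mu_{j}$-coefficients vanish in the limit via $A_{t}=\left(\partial y/\partial\mu+tH_{\varphi}\right)^{-1}\rightarrow 0$; these are dual formulations of the same computation.
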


\begin{proof}
As discussed in the proof of \ref{thm3-3}, for any $p \in M$,
there exists a holomorphic chart $V_{p} \subset (T_{\CC}^{n-k}\times \CC^{r})/F$ of $p$ and the standard $J$-holomorphic coordinates $(w_{1}, \cdots, w_{n-k}, z_{1}, \cdots, z_{r})$ on $V_{p}$  such that
\begin{enumerate}
\item[(i)]With respect to the splitting $T^{n}_{\CC}= T^{n-k}_{\CC} \times T^{k}_{\CC}$ discussed in proof of Theorem \ref{thm3-3}, $T_{\CC}^{n-k} \subset T_{\CC}^{n}$ acts on $T^{n-k}_{\CC}$ by right multiplication and $ T_{n}^{k} \subset T_{\CC}^{n}$ acts on $\CC^{r}$ defining a linear unitary representation $T^{k} \rightarrow U(r)$ with $r=m-n+k$;
\item[(ii)]the functions
$w_{j}^{t}= e^{-itX_{\varphi}}w_{j} = e^{y_{j}^{t} + i\theta_{j}},  z_{l}^{t}=e^{-itX_{\varphi}}z_{l} ,  j=1,\cdots,n-k, l=1, \cdots, r,$ are holomorphic coordinates on $V_{p}^{t}$ with respect to $J_{t}$. 
\end{enumerate}

Let $\xi_{1},\cdots,\xi_{n}$ be a basis of $\fot$ such that $\xi_{1}, \cdots,\xi_{n-k}$ and $\xi_{n-k+1}, \cdots,\xi_{n}$ span the Lie algebras of $T^{n-k}$ and $ T^{k}$ respectively. We denote the fundamental vector field associated to $\xi_{j}$ by $\xi^{\#}_{j}$. One has:
$d\mu_{j}= \omega(-,\xi^{\#}_{j})$, for $j=1,\cdots,n$ and $(d\mu_{j})_{p}=0$, for $j=n-k+1, \cdots, n$. 
It turns out that
\begin{enumerate}
\item [$\cdot$]$(\ker d\mu)_{p} \otimes \CC=  \left\{\frac{\partial }{\partial \theta_{1}} , \cdots, \frac{\partial }{\partial \theta_{n-k}}\right\}_{\CC} \oplus \left\{ \frac{\partial }{\partial z_{1}},\cdots, \frac{\partial }{\partial z_{r}},\frac{\partial }{\partial \bar{z}_{1}},\cdots, \frac{\partial }{\partial \bar{z}_{r}} \right\}_{\CC}$;
\item [$\cdot$] $(\ker d\mu)_{p} \otimes \CC \cap \shP_{J}= (\ker d\mu )_{p} \otimes \CC \cap TM^{0,1}_{J}= \left\{ \frac{\partial }{\partial \bar{z}_{1}},\cdots, \frac{\partial }{\partial \bar{z}_{r}} \right\}_{\CC}$;
\item [$\cdot$] $\mathrm({\Im  d\rho})_{p}\otimes \CC =\left\{\xi^{\#}_{1} , \cdots,\xi_{n-k}^{\#}\right\}_{\CC} =\left\{\frac{\partial }{\partial \theta_{1}} , \cdots, \frac{\partial }{\partial \theta_{n-k}}\right\}_{\CC}$.\\
Therefore, by the construction of $\shP_{\mathrm{mix}}$, one has:
$$(\shP_{\mathrm{mix}})_{p}=(\mathrm{Ker} d\mu \otimes\CC \cap \shP_{J}) \oplus(\Im d\rho \otimes \CC) = \left\{\frac{\partial }{\partial \theta_{1}} , \cdots, \frac{\partial }{\partial \theta_{n-k}}\right\}_{\CC} \oplus \left\{ \frac{\partial }{\partial \bar{z}_{1}},\cdots, \frac{\partial }{\partial \bar{z}_{r}} \right\}_{\CC}.$$
\end{enumerate}
Then we will focus on computing $(\shP_{t})_{p}$.
By theorem \ref{thm3-1}, we have:
\begin{enumerate}
\item $w_{j}^{t}=w_{j}e^{t\frac{\partial \varphi}{\partial \mu_{j}}}=e^{y_{j}^{t} +i\theta_{j}}, j=1,\cdots,n-k;$
\item $z_{l}^{t}= z_{l}e^{t(\sum_{\gamma=n-k+1}^{n}\frac{\partial \varphi}{\partial \mu_{\gamma}} b_{\gamma l})}, l=1,\cdots,r$, with $0\le r\le m$.
  \end{enumerate}
Since $\frac{\partial }{\partial \bar{z}_{l}^{t}}=\frac{\partial }{\partial \bar{z}_{l}}$ at $p=[(1,0)]$, for $l=1,\cdots, r$, one has:
  \begin{align*}
 & (\shP_{t})_{p}=\left\{ \frac{\partial }{\partial \bar{w}_{1}^{t}}, \cdots, \frac{\partial }{\partial \bar{w}_{n-k}^{t}},\frac{\partial }{\partial \bar{z}_{1}^{t}},\cdots, \frac{\partial }{\partial \bar{z}_{r}^{t}}\right\}_{\CC}\\
  &= \left\{ \frac{\partial }{\partial \bar{w}_{1}^{t}}, \cdots, \frac{\partial }{\partial \bar{w}_{n-k}^{t}}\right\}_{\CC} \oplus \left\{\frac{\partial }{\partial \bar{z}_{1}^{t}},\cdots, \frac{\partial }{\partial \bar{z}_{r}^{t}}\right\}_{\CC}\\
    &= \left\{ \frac{\partial }{\partial y_{1}^{t}}+i\frac{\partial }{\partial \theta_{1}}, \cdots,\frac{\partial }{\partial y_{n-k}^{t}}+i\frac{\partial }{\partial \theta_{n-k}}\right\}_{\CC} \oplus \left\{\frac{\partial }{\partial \bar{z}_{1}},\cdots, \frac{\partial }{\partial \bar{z}_{r}}\right\}_{\CC}.
 \end{align*}
 Therefore it is enough to show that $ \left\{ \frac{\partial }{\partial y_{j}^{t}}+i\frac{\partial }{\partial \theta_{j}}\right\}_{\CC} \rightarrow  \left\{\frac{\partial }{\partial \theta_{j}}\right\}_{\CC}$, as $t \rightarrow \infty$ for $j=1,\cdots,n-k$.\\
By Burns-Guillemin's theorem in \cite{BG} (or see Theorem \ref{thm2-0}), we can choose a $T^{n-k}$-invariant K\"ahler potential $$\rho=\rho(|w_{1}|, \cdots, |w_{n-k}|, z_{1}, \bar{z}_{1},\cdots,z_{m},\bar{z}_{m}),$$ 
such that $\mu_{j}= \frac{\partial \rho}{\partial \tilde{w}_{j}}=w_{j}\frac{\partial \rho}{\partial w_{j}}=\bar{w}_{j}\frac{\partial \rho}{\partial \bar{w}_{j}}= \frac{\partial \rho}{\partial \tilde{\bar{w}}_{j}}$, for $j=1, \cdots, n-k$. 
 By the plurisubharmonicity of the K\"ahler potential $\rho$, $$\begin{pmatrix}
\frac{\partial^{2} \rho}{\partial \bar{w}_{1} \partial w_{1}}& \cdots&
  \frac{\partial^{2} \rho}{\partial \bar{w}_{n-k} \partial w_{1}} \\
  \cdots&  \cdots&  \cdots\\
 \frac{\partial^{2} \rho}{\partial \bar{w}_{1} \partial w_{n-k}}&\cdots&
  \frac{\partial^{2} \rho}{\partial \bar{w}_{n-k} \partial w_{n-k}}
 \end{pmatrix}$$
  is positive definite as discussed in the proof of Theorem \ref{thm3-1}. Note that
\begin{align*}
 &\frac{1}{2} \begin{pmatrix}
\frac{\partial \mu_{1}}{\partial y_{1}}& \cdots&
\frac{\partial \mu_{n-k}}{\partial y_{1}}\\
\cdots & \cdots &\cdots\\
\frac{\partial \mu_{1}}{\partial y_{n-k}}& \cdots&
\frac{\partial \mu_{n-k}}{\partial y_{n-k}}
\end{pmatrix}
 &=  \begin{pmatrix} 
  w_{1} & &0\\
&\ddots&\\
0&&  w_{n-k}\end{pmatrix}   \begin{pmatrix}
\frac{\partial^{2} \rho}{\partial \bar{w}_{1} \partial w_{1}}& \cdots&
  \frac{\partial^{2} \rho}{\partial \bar{w}_{n-k} \partial w_{1}} \\
  \cdots&  \cdots&  \cdots\\
 \frac{\partial^{2} \rho}{\partial \bar{w}_{1} \partial w_{n-k}}&\cdots&
  \frac{\partial^{2} \rho}{\partial \bar{w}_{n-k} \partial w_{n-k}}
 \end{pmatrix} \begin{pmatrix} 
  \bar{w}_{1} & &0\\
&\ddots&\\
0&&  \bar{w}_{n-k}\end{pmatrix}.
 \end{align*}
Since $w_{1}(p)\ne 0, \cdots, w_{n-k}(p)\ne 0$, one obtains:
$ \begin{pmatrix}
\frac{\partial \mu_{1}}{\partial y_{1}}& \cdots&
\frac{\partial \mu_{n-k}}{\partial y_{1}}\\
\cdots & \cdots &\cdots\\
\frac{\partial \mu_{1}}{\partial y_{n-k}}& \cdots&
\frac{\partial \mu_{n-k}}{\partial y_{n-k}}
\end{pmatrix}
$ is positive definite. It turns out that $(\mu_{1},\cdots,\mu_{n-k}, \theta_{1},\cdots,\theta_{n-k},z_{1},\cdots,z_{r},\bar{z}_{1},\cdots,\bar{z}_{r})$ are the local coordinates on the open set $V_{p}$ containing $p$. And $\begin{pmatrix}
\frac{\partial y_{1}}{\partial \mu_{1}}&\cdots&
\frac{\partial y_{n-k}}{\partial \mu_{1}}\\
\cdots&\cdots&\cdots\\
\frac{\partial y_{1} }{\partial \mu_{n-k}}& \cdots&
\frac{\partial y_{n-k}}{\partial \mu_{n-k}}
\end{pmatrix}
= \begin{pmatrix}
\frac{\partial \mu_{1}}{\partial y_{1}}& \cdots&
\frac{\partial \mu_{n-k}}{\partial y_{1}}\\
\cdots & \cdots &\cdots\\
\frac{\partial \mu_{1}}{\partial y_{n-k}}& \cdots&
\frac{\partial \mu_{n-k}}{\partial y_{n-k}}
\end{pmatrix}^{-1}$ is positive definite.  For any $t >0$,

 \begin{align*}
A_{t}^{-1}: =
\begin{pmatrix}
\frac{\partial y_{1}^{t}}{\partial \mu_{1}}&\cdots&
\frac{\partial y_{n-k}^{t}}{\partial \mu_{1}}\\
\cdots&\cdots&\cdots\\
\frac{\partial y_{1}^{t} }{\partial \mu_{n-k}}& \cdots&
\frac{\partial y_{n-k}^{t}}{\partial \mu_{n-k}}
\end{pmatrix}
=\begin{pmatrix}
\frac{\partial y_{1}}{\partial \mu_{1}}&\cdots&
\frac{\partial y_{n-k}}{\partial \mu_{1}}\\
\cdots&\cdots&\cdots\\
\frac{\partial y_{1} }{\partial \mu_{n-k}}& \cdots&
\frac{\partial y_{n-k}}{\partial \mu_{n-k}}
\end{pmatrix}
+ t\begin{pmatrix}
 \frac{\partial^{2} \varphi}{ \partial \mu_{1} \partial\mu_{1}} & \cdots&
 \frac{\partial^{2} \varphi}{ \partial \mu_{1} \partial\mu_{n-k}}\\  
 \cdots & \cdots&\cdots\\
 \frac{\partial^{2} \varphi}{ \partial \mu_{n-k} \partial\mu_{1}} & \cdots & \frac{\partial^{2} \varphi}{ \partial \mu_{n-k} \partial\mu_{n-k}}
 \end{pmatrix}.
\end{align*}
 As $\varphi$ is strictly convex function, 

 $$\lim_{t \rightarrow \infty} A_{t}= \lim_{t \rightarrow \infty}\begin{pmatrix}
\frac{\partial y_{1}^{t}}{\partial \mu_{1}}&\cdots&
\frac{\partial y_{n-k}^{t}}{\partial \mu_{1}}\\
\cdots&\cdots&\cdots\\
\frac{\partial y_{1}^{t} }{\partial \mu_{n-k}}& \cdots&
\frac{\partial y_{n-k}^{t}}{\partial \mu_{n-k}}
\end{pmatrix}^{-1}
= \lim_{t \rightarrow \infty}\begin{pmatrix}
\frac{\partial \mu_{1}}{\partial y^{t}_{1}}& \cdots&
\frac{\partial \mu_{n-k}}{\partial y^{t}_{1}}\\
\cdots & \cdots &\cdots\\
\frac{\partial \mu_{1}}{\partial y^{t}_{n-k}}& \cdots&
\frac{\partial \mu_{n-k}}{\partial y^{t}_{n-k}}
\end{pmatrix}=0.$$
That is, $$  \lim_{t\rightarrow \infty}\frac{\partial \mu_{i}}{\partial y^{t}_{j}} = 0,~\text{for}~ i,j=1,\cdots,n-k.$$
Then one has:
 $$\frac{\partial}{\partial y^{t}_{i}} = \sum_{j=1}^{n-k} \frac{\partial \mu_{j}}{\partial y^{t}_{i}} \frac{\partial}{\partial \mu_{j}} \rightarrow 0,  \text{as} ~~t\rightarrow \infty.$$
 This gives: 
$$ \left\{ \frac{\partial }{\partial y_{j}^{t}}+i\frac{\partial }{\partial \theta_{j}}\right\}_{\CC} \rightarrow  \left\{\frac{\partial }{\partial \theta_{j}}\right\}_{\CC}, ~~j=1,\cdots,n-k,$$
where the limit is taken inside the Grassmannian of the linear subspaces inside  $T_{p}M\otimes \CC$.
 We therefore have:
 \begin{align*}
 &\lim_{t \rightarrow \infty} (\shP_{t})_{p}=\lim_{t \rightarrow \infty}\left\{ \frac{\partial }{\partial \bar{w}_{1}^{t}}, \cdots, \frac{\partial }{\partial \bar{w}_{n-k}^{t}},\frac{\partial }{\partial \bar{z}_{1}^{t}},\cdots, \frac{\partial }{\partial \bar{z}_{r}^{t}}\right\}_{\CC}\\
    &= \lim_{t \rightarrow \infty}\left\{ \frac{\partial }{\partial y_{1}^{t}}+i\frac{\partial }{\partial \theta_{1}}, \cdots,\frac{\partial }{\partial y_{n-k}^{t}}+i\frac{\partial }{\partial \theta_{n-k}}\right\}_{\CC} \oplus \lim_{t \rightarrow \infty}\left\{\frac{\partial }{\partial \bar{z}_{1}},\cdots, \frac{\partial }{\partial \bar{z}_{r}}\right\}_{\CC} \\
    &= \left\{\frac{\partial }{\partial \theta_{1}} , \cdots, \frac{\partial }{\partial \theta_{n-k}}\right\}_{\CC} \oplus \left\{ \frac{\partial }{\partial \bar{z}_{1}},\cdots, \frac{\partial }{\partial \bar{z}_{r}} \right\}_{\CC},
 \end{align*}
where the limit is taken in the Lagrangian Grassmannian of the complexified tangent space at point $p \in M$.
\end{proof}  
\phantomsection

\bibliographystyle{amsplain}

\end{document}